\documentclass[11pt]{article}

\usepackage[T1]{fontenc}
\usepackage{lmodern}
\usepackage[a4paper,margin=1in]{geometry}
\usepackage{amsmath,amssymb,amsthm,mathtools}
\usepackage{bm}
\usepackage{graphicx}
\usepackage{booktabs}
\usepackage{array}
\usepackage{longtable}
\usepackage{xcolor}
\usepackage{enumitem}
\usepackage{tikz}
\usetikzlibrary{positioning,arrows.meta,calc}
\usepackage[authoryear,round]{natbib}
\usepackage[colorlinks=true,allcolors=mHeaderBlue]{hyperref}
\usepackage[capitalize,noabbrev]{cleveref}
\usepackage{placeins}
\usepackage[protrusion=true,expansion=false]{microtype}
\definecolor{mHeaderBlue}{HTML}{4057A8}
\definecolor{proved}{HTML}{2E7D5B}
\definecolor{cond}{HTML}{B7791F}
\definecolor{conj}{HTML}{B0413E}

\newtheorem{theorem}{Theorem}[section]
\newtheorem{proposition}[theorem]{Proposition}
\newtheorem{lemma}[theorem]{Lemma}
\newtheorem{corollary}[theorem]{Corollary}
\newtheorem{conjecture}[theorem]{Conjecture}
\theoremstyle{definition}
\newtheorem{definition}[theorem]{Definition}
\newtheorem{remark}[theorem]{Remark}

\newtheorem{principle}[theorem]{Principle}

\newcommand{\tier}[2]{{\small\textsf{\textcolor{#1}{[#2]}}}}
\newcommand{\Proved}{\tier{proved}{Proved}}
\newcommand{\Conditional}{\tier{cond}{Conditional}}
\newcommand{\Conjectural}{\tier{conj}{Conjectural}}
\newcommand{\R}{\mathbb{R}}
\newcommand{\N}{\mathbb{N}}

\newcommand{\E}{\mathbb{E}}

\newcommand{\G}{\mathcal{G}}

\newcommand{\dd}{\,\mathrm{d}}
\newcommand{\Ito}{It\^o}

\newcommand{\cadlag}{c\`adl\`ag}

\newcommand{\Sig}{\mathrm{Sig}}

\newcommand{\Tr}{\operatorname{Tr}}

\newcommand{\one}{\mathbf 1}
\newcommand{\emptyword}{\varnothing}
\newcommand{\norm}[1]{\left\lVert #1\right\rVert}

\newcommand{\inner}[2]{\left\langle #1,#2\right\rangle}
\newcommand{\shuffle}{\mathbin{\sqcup\!\sqcup}}

\newcommand{\expt}{\exp_{\otimes}}
\newcommand{\oprod}{\overrightarrow{\textstyle\prod}}
\newcommand{\lie}{\mathfrak{L}}

\DeclareMathOperator{\Cov}{Cov}
\DeclareMathOperator{\Var}{Var}
\title{\bfseries A General Theory of Paths:\\
\large Signatures, Jump Lifts, and Expected Signatures of Self-Exciting Processes}

\author{Miquel Noguer i Alonso\\ \\ Artificial Intelligence Finance Institute}

\date{\today}

\begin{document}
\maketitle

\begin{abstract}
This paper develops a path-first theory in which the signature is used as a universal coordinate for deterministic paths, rough paths, jump streams, and path-valued random variables. The organizing thesis is that geometricity is a first-order algebraic property whose obstructions are second order: a bracket when one chooses a non-geometric lift, and a covariance when one averages over random paths. This principle links the shuffle identity, the Marcus--\Ito{} distinction, expected signatures, signature kernels, and the geometry of the free nilpotent group.

The paper has four central contributions. First, the geometricity-defect theorem identifies quadratic covariation and coordinate covariance as the two canonical failures of shuffle multiplicativity, implying that an expected signature is group-like only for a deterministic reduced path. Second, the Hopf square proves that for pure-jump finite-variation paths the forward \Ito{} signature is exactly the iterated-sums signature, while the Marcus signature is Hoffman's exponential image of it; thus jump convention and discrete-to-continuous conversion are the same algebraic transition. Third, affine and exponential Hawkes processes are shown to admit finite-dimensional linear closures for truncated expected signatures after state-weight augmentation; for scalar Hawkes clocks the level-two matrix is written explicitly and the first expected-signature coordinate locally identifies the baseline, excitation, and decay parameters. Fourth, an antisymmetric second-level cross-area is proved to detect two-channel Hawkes excitation direction to first order.

Secondary consequences are included as supporting material: kernel-MMD decompositions, the central tower of free nilpotent truncations, antipode reversal of cross-area, the stable-law moment threshold, normalized expected signatures for heavy-tailed regimes, and a contraction-based signature large-deviation principle. A reproducibility script validates the algebraic identities, Hawkes formulas, Hopf square, Marcus--\Ito{} gap, identifiability reconstruction, and cross-area sign experiment.
\end{abstract}

\medskip
\noindent\textbf{Keywords:} paths; signatures; rough paths; Hopf algebras; \cadlag{} paths; Marcus lift; \Ito{} lift; Hawkes processes; cross-excitation area; expected signature; geometricity defect; signature kernels; stable laws; identifiability.
\newpage
\tableofcontents

\section{Introduction}
\label{sec:intro}

A stochastic process can be viewed as a probability law on a space of paths. This observation is universal but incomplete: it says where the law lives, not what coordinate system should be used to analyze the path itself. The signature provides such a coordinate system. For a bounded-variation path $x:[0,T]\to E$, it is the tensor series
\begin{equation}
\Sig(x)_{0,T}
=
\left(1,
\int_{0<u_1<T}\dd x_{u_1},
\int_{0<u_1<u_2<T}\dd x_{u_1}\otimes\dd x_{u_2},
\ldots\right),
\end{equation}
with suitable rough-path extensions when the path is not classically integrable. The signature is multiplicative under concatenation, obeys the shuffle relations, and is faithful on reduced paths. These facts make it a natural universal coordinate for deterministic trajectories and for path-valued random variables.

The aim of this paper is to state a general theory of paths in a way that is useful both for pure path theory and for event-driven financial modeling. The organizing idea is the representation ladder
\begin{equation}
\boxed{
\text{path}
\longrightarrow
\text{enhanced path}
\longrightarrow
\text{full signature}
\longrightarrow
\text{truncated signature}
\longrightarrow
\text{expected signature}.}
\end{equation}
Each arrow forgets a precise layer of information. The full signature forgets parametrization and tree-like excursions but remains faithful on reduced paths. A truncation is a finite feature map. Taking expectation turns a realized path coordinate into a moment coordinate of the path law.

The paper is not only an exposition of known signature theory. The original contribution is a concrete expected-signature theory for self-exciting jump paths. Hawkes processes are the canonical model of clustered event arrivals. They are neither Levy processes nor independent-increment objects, so their expected signatures do not reduce to a tensor Levy--Khintchine exponential. Nevertheless, in the exponential-kernel case their intensity is Markovian, and this Markov structure closes a finite-dimensional system for every truncated expected signature. This gives a direct bridge between self-exciting stochastic clocks and algebraic path coordinates.

\paragraph{Core contributions.}
The paper is organized around four main results.
\begin{enumerate}[label=(\roman*),leftmargin=2.25em]
\item \textbf{Geometricity defect.} Bracket and covariance are shown to be the two second-order obstructions to shuffle multiplicativity. This explains simultaneously why the \Ito{} lift is non-geometric and why expected signatures usually leave the group of group-like tensors.
\item \textbf{Hopf square for jumps and streams.} For pure-jump finite-variation paths, the forward \Ito{} signature equals the iterated-sums signature and the Marcus signature equals Hoffman's exponential image. The jump-convention problem and the discrete/continuous-signature problem are therefore one Hopf-algebraic transition.
\item \textbf{Self-exciting expected signatures.} Truncated affine expected signatures close linearly after adjoining state-weighted coordinates. The scalar exponential Hawkes case is made explicit through a level-two matrix system and a local identifiability formula for $(\mu,\alpha,\beta)$ from derivatives of $T\mapsto\E[N_T]$.
\item \textbf{Directional cross-area.} A second-level antisymmetric signature coordinate detects the direction of two-channel Hawkes cross-excitation to leading order, giving a signature-native lead--lag statistic for event systems.
\end{enumerate}

\begin{figure}[t]
\centering
\includegraphics[width=0.95\linewidth]{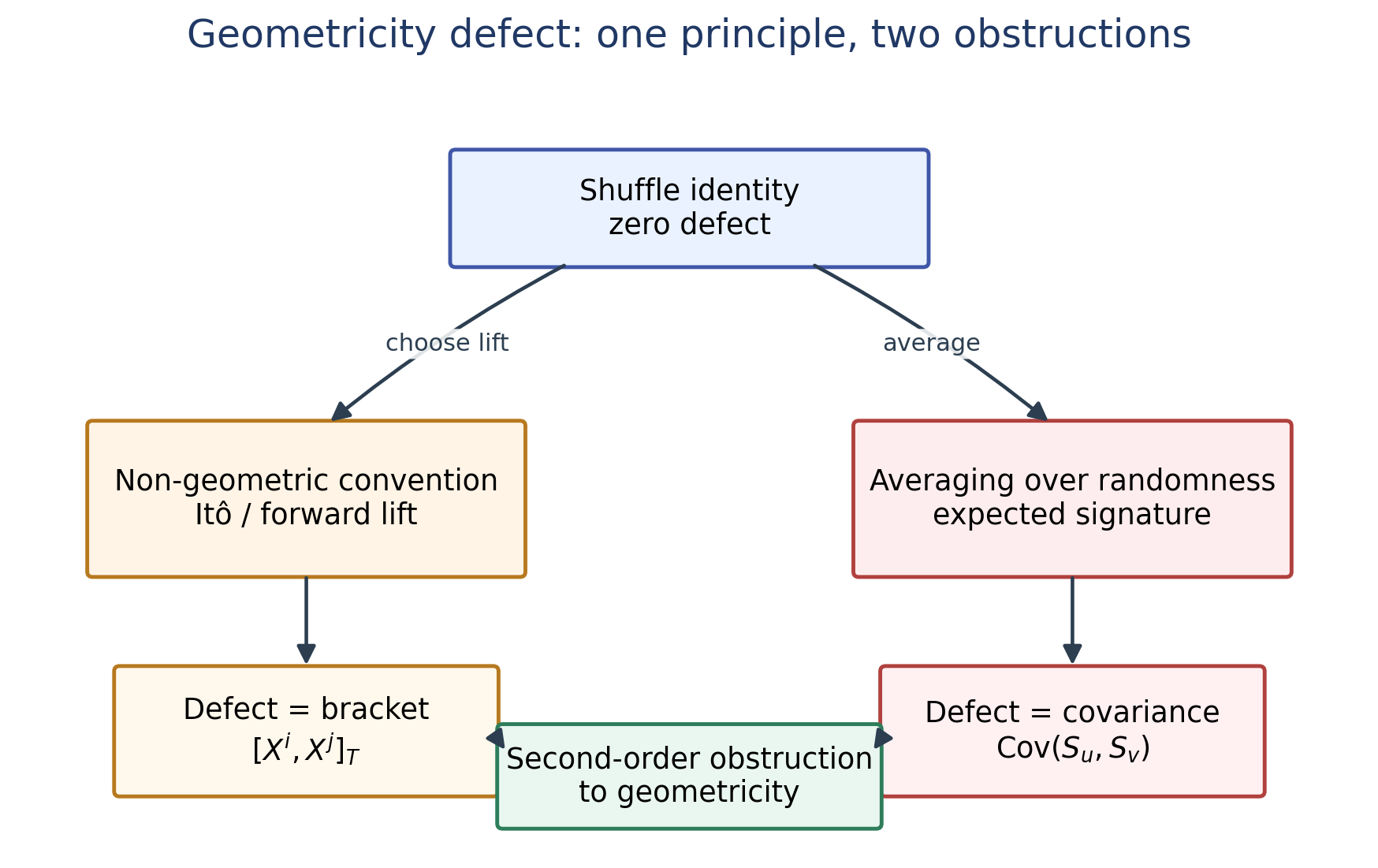}
\caption{The geometricity-defect principle. The shuffle identity is the zero-defect case. A non-geometric convention produces a bracket defect, while averaging over random paths produces a covariance defect.}
\label{fig:geometricity-defect}
\end{figure}

\paragraph{Supporting consequences.}
The remaining results clarify the scope of the theory: kernel MMDs are Euclidean distances between truncated expected signatures; the representation ladder is a central tower of free nilpotent groups; reversal is the tensor antipode and makes cross-area reversal-odd; raw expected signatures have a sharp stable-law moment threshold; normalized expected signatures remain available under characteristic-kernel hypotheses; and signature large deviations follow by contraction.

\paragraph{What is new and what is used.}
Classical inputs include Chen multiplicativity, the shuffle identity, Chen--Ree, Hambly--Lyons uniqueness, Lyons' extension theorem, and the expected-signature characterization under its usual determinacy hypotheses. The new synthesis is the geometricity-defect principle and the Hopf-square framing. The new calculations are the Hawkes expected-signature closures, the explicit level-two Hawkes matrix, local scalar Hawkes identifiability, and the leading-order cross-area direction law. Conditional or cited consequences are kept separate in the claim-tier table in \cref{app:claims}.

All theorem statements are tiered in the appendix as new proved, new synthesis, classical/cited, conditional, or conjectural.

\section{Tensor algebra and signatures}
\label{sec:tensor}

Let $E=\R^d$. The completed tensor algebra over $E$ is
\begin{equation}
T((E))=\prod_{k=0}^{\infty}E^{\otimes k},
\end{equation}
and its level-$m$ truncation is
\begin{equation}
T^{(m)}(E)=\bigoplus_{k=0}^{m}E^{\otimes k}.
\end{equation}
The empty word is denoted by $\emptyword$. For a word $w=i_1\cdots i_k$ over the alphabet $\{1,\ldots,d\}$, $|w|=k$ and $e_w=e_{i_1}\otimes\cdots\otimes e_{i_k}$. The coordinate of a tensor series $a\in T((E))$ along $w$ is $\inner{a}{w}$.

\begin{definition}[signature]
Let $x:[0,T]\to E$ be a path of bounded variation. Its signature over $[s,t]$ is
\begin{equation}
\Sig(x)_{s,t}
=
\left(1,
\int_{s<u_1<t}\dd x_{u_1},
\int_{s<u_1<u_2<t}\dd x_{u_1}\otimes\dd x_{u_2},
\ldots\right).
\end{equation}
For a word $w=i_1\cdots i_k$,
\begin{equation}
\inner{\Sig(x)_{s,t}}{w}
=
\int_{s<u_1<\cdots<u_k<t}\dd x^{i_1}_{u_1}\cdots\dd x^{i_k}_{u_k}.
\end{equation}
\end{definition}

\begin{theorem}[Chen identity]
\label{thm:chen}
Let $x$ be a bounded-variation path and $s\le u\le t$. Then
\begin{equation}
\Sig(x)_{s,t}=\Sig(x)_{s,u}\otimes\Sig(x)_{u,t}.
\end{equation}
Consequently, the signature is a monoid homomorphism from chronological concatenation of paths to tensor multiplication.
\end{theorem}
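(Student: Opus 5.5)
The plan is to prove the identity coordinatewise, one word at a time, by decomposing the simplex of integration at the intermediate time $u$. Fix a word $w=i_1\cdots i_k$. By the definition of the signature, $\inner{\Sig(x)_{s,t}}{w}$ is the integral of $\dd x^{i_1}_{u_1}\cdots\dd x^{i_k}_{u_k}$ over the ordered simplex $\{s<u_1<\cdots<u_k<t\}$. Every ordered tuple in this simplex has a unique index $j\in\{0,\ldots,k\}$ with $u_1<\cdots<u_j\le u<u_{j+1}<\cdots<u_k$. So, up to the boundary sets where some $u_i=u$, the simplex is the disjoint union over $j$ of the products $\{s<u_1<\cdots<u_j<u\}\times\{u<u_{j+1}<\cdots<u_k<t\}$. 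Fubini for the product of the Lebesgue--Stieltjes measures $\dd x^{i_1}\otimes\cdots\otimes\dd x^{i_k}$ then factors the integral over the $j$-th piece as $\inner{\Sig(x)_{s,u}}{i_1\cdots i_j}\,\inner{\Sig(x)_{u,t}}{i_{j+1}\cdots i_k}$.

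Summing over $j$ gives
\[
\inner{\Sig(x)_{s,t}}{w}=\sum_{j=0}^{k}\inner{\Sig(x)_{s,u}}{i_1\cdots i_j}\,\inner{\Sig(x)_{u,t}}{i_{j+1}\cdots i_k}.
\]
This is exactly the $w$-coordinate of the tensor product $\Sig(x)_{s,u}\otimes\Sig(x)_{u,t}$, since the product in $T((E))$ is given on words by deconcatenation: $\inner{a\otimes b}{w}=\sum_{w=w'w''}\inner{a}{w'}\inner{b}{w''}$. The cases $j=0$ and $j=k$ use the level-zero coordinate $1$. Since a tensor series is determined by its word coordinates, the identity follows at every level.

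The one step needing care is the boundary set where some $u_i=u$, which must not be double counted or silently dropped. If $x$ is continuous of bounded variation, the measures $\dd x^{i}$ have no atoms, so these sets are null and the decomposition is exact. If the paper's bounded-variation class allows jumps, I would fix the convention that the integrals are over open simplices, with $\dd x$ the Lebesgue--Stieltjes measure of the \cadlag{} version. I would then check that the atom at $u$ is assigned to exactly one side, here $(s,u]$ via $u_j\le u$, consistently with the definition of $\Sig(x)_{s,u}$. If the definition's strict inequalities make this assignment ambiguous, I would state the theorem for continuous bounded-variation paths, which is the setting in which the displayed definition is unambiguous, and defer jumps to the later Marcus and \Ito{} lifts. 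I expect this convention check to be the main obstacle; the algebra itself is routine.

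For the homomorphism statement, let $x$ on $[s,u]$ and $y$ on $[u,t]$ be paths with $x_u=y_u$, after translating $y$, since signatures depend only on increments. Their concatenation $x*y$ restricts to $x$ on $[s,u]$ and to $y$ on $[u,t]$. Applying the identity just proved to $x*y$ gives $\Sig(x*y)=\Sig(x)\otimes\Sig(y)$. Reparametrization invariance lets the time intervals be placed end to end. The constant path has signature $(1,0,0,\ldots)$, the unit of $T((E))$, so the map is a monoid homomorphism.
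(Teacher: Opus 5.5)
Your proposal is correct and follows the paper's own proof. Like the paper, you split the simplex by the number $j$ of integration times lying in $[s,u]$, factor each piece, identify the result with the deconcatenation formula for the tensor product, and then apply the identity at the joining time to get the concatenation statement. Your extra care about the boundary set $\{u_i=u\}$, which is null for continuous bounded-variation paths, and about the constant path giving the unit of $T((E))$, makes explicit details the paper leaves implicit.
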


\begin{proof}
For a word $w=i_1\cdots i_k$, split the simplex
\begin{equation}
\{s<t_1<\cdots<t_k<t\}
\end{equation}
according to the number $j$ of integration variables lying in $[s,u]$. This gives
\begin{equation}
\inner{\Sig(x)_{s,t}}{w}
=
\sum_{j=0}^k
\inner{\Sig(x)_{s,u}}{i_1\cdots i_j}
\inner{\Sig(x)_{u,t}}{i_{j+1}\cdots i_k},
\end{equation}
which is exactly the coordinate formula for tensor multiplication. The concatenation statement follows by applying the same identity at the joining time.
\end{proof}

\begin{theorem}[shuffle identity]
\label{thm:shuffle}
For bounded-variation paths and words $u,v$,
\begin{equation}
\inner{\Sig(x)}{u}\,\inner{\Sig(x)}{v}
=
\inner{\Sig(x)}{u\shuffle v},
\end{equation}
where $u\shuffle v$ is the sum of all order-preserving interlacings of $u$ and $v$.
\end{theorem}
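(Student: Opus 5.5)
The plan is induction on $|u|+|v|$, using the recursive definition of the shuffle: $\emptyword\shuffle v=v$, $u\shuffle\emptyword=u$, and
\begin{equation}
(ua)\shuffle(vb)=\bigl((ua)\shuffle v\bigr)b+\bigl(u\shuffle(vb)\bigr)a
\end{equation}
for letters $a,b$. I work with the running coordinates $S^w_t:=\inner{\Sig(x)_{s,t}}{w}$ and fix $s$. The identity at time $t=s$ is trivial, since both sides vanish unless a word is empty. So it suffices to show that both sides of the identity satisfy the same integral equation in $t$.

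First, directly from the definition, the last integration variable gives
\begin{equation}
S^{wa}_t=\int_s^t S^w_r\,\dd x^a_r .
\end{equation}
Each $S^w$ is continuous in $t$ and of bounded variation when $x$ is. Next I apply the Stieltjes product rule to two continuous bounded-variation functions:
\begin{equation}
S^{ua}_tS^{vb}_t=\int_s^t S^{ua}_r\,\dd S^{vb}_r+\int_s^t S^{vb}_r\,\dd S^{ua}_r
=\int_s^t S^{ua}_rS^{v}_r\,\dd x^b_r+\int_s^t S^{u}_rS^{vb}_r\,\dd x^a_r .
\end{equation}
The induction hypothesis applies to the pairs $(ua,v)$ and $(u,vb)$, whose total length is smaller. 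It replaces the two integrands by $S^{(ua)\shuffle v}_r$ and $S^{u\shuffle(vb)}_r$, extended linearly over words. The recursion formula for $S^{wa}$, again applied linearly, then turns the right-hand side into $S^{((ua)\shuffle v)b}_t+S^{(u\shuffle(vb))a}_t$. By the recursive definition of the shuffle, this equals $S^{(ua)\shuffle(vb)}_t$. Taking $t=T$ (or general $[s,t]$) gives the statement.

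The only delicate point is the integration-by-parts step. It requires that there be no bracket term, meaning the absence of $\sum\Delta S^{ua}\Delta S^{vb}$ or of a quadratic covariation. This holds because $x$ is continuous of bounded variation, so the $S^w$ are continuous of bounded variation. That is exactly where geometricity enters, and it foreshadows the bracket defect discussed later for the \Ito{} lift. If the paper intends bounded-variation paths that may jump, I would restrict to continuous ones, or note that the iterated integrals must then be interpreted with the Marcus convention.

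As a cross-check, I would also sketch an alternative combinatorial proof. The product of the two simplex integrals is an integral over $\Delta_{|u|}\times\Delta_{|v|}$. Up to a null set where times coincide, this product decomposes into the disjoint union of simplices indexed by the interlacings. Each such simplex yields exactly one term of $u\shuffle v$. This argument needs no atoms in $\dd x$, so it rests on the same continuity hypothesis.
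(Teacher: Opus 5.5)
Your proof is correct, but your main argument takes a different route from the paper. The paper's proof is the combinatorial one you sketch only as a cross-check: the product domain $\Delta_{|u|}\times\Delta_{|v|}$ is cut into simplices indexed by the interlacings. Your primary argument is instead an induction on $|u|+|v|$, using the recursive (half-shuffle) definition of $\shuffle$ together with the Stieltjes product rule.

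What your route buys is that all the analytic content sits in one fact, the bracket-free integration by parts $S^{ua}_tS^{vb}_t=\int S^{ua}\dd S^{vb}+\int S^{vb}\dd S^{ua}$. The argument therefore carries over unchanged to any calculus with that property. An example is Stratonovich iterated integrals of a continuous semimartingale, where there is no pathwise simplex integral to decompose. Moreover, rerunning your induction with the bracket term $\sum\Delta S^{ua}\Delta S^{vb}$ included is exactly the computation behind \cref{prop:ito-defect}. For pure-jump paths, that bracket term produces the merged-letter term, i.e.\ the quasi-shuffle character of \cref{prop:ito-sums}.

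What the paper's decomposition buys is brevity and an explicit bijection between shuffles and cells of the product domain. However, it needs Fubini for products of the signed measures $\dd x^{i}$, and it needs the coincidence sets $\{u_p=v_q\}$ to be null, i.e.\ that $\dd x$ has no atoms. The paper leaves this implicit. Your remark that both proofs rely on continuity, or on the Marcus convention when $x$ jumps, is a correct sharpening of the stated hypothesis.
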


\begin{proof}
The product of the two iterated integrals is an integral over the product of two ordered simplexes. This product domain decomposes into disjoint simplexes indexed by the shuffles of the two ordered lists of integration times. Integrating over these simplexes yields exactly the displayed identity.
\end{proof}

\begin{theorem}[log-signature]
\label{thm:logsig}
The signature of a bounded-variation path is group-like. Equivalently, its logarithm belongs to the free Lie algebra over $E$.
\end{theorem}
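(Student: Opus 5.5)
The plan is to derive group-likeness directly from the shuffle identity (\cref{thm:shuffle}) and then pass to the Lie statement through the standard Hopf-algebraic characterization. Concretely, a tensor series $a\in T((E))$ with $\inner{a}{\emptyword}=1$ is called group-like if $\Delta a=a\hat\otimes a$. Here $\Delta$ is the deconcatenation-dual coproduct on $T((E))$ that is adjoint to the shuffle product on words. Dually, the linear functional $w\mapsto\inner{a}{w}$ must be a character of the shuffle algebra, i.e.\ $\inner{a}{u}\inner{a}{v}=\inner{a}{u\shuffle v}$ for all words $u,v$, together with $\inner{a}{\emptyword}=1$.

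For $a=\Sig(x)_{s,t}$ the normalization holds by definition. The multiplicativity is exactly \cref{thm:shuffle}, extended bilinearly from words to arbitrary elements of the shuffle algebra. So the first half of the statement follows immediately once the two formulations of group-likeness are identified. Checking that identification is routine duality bookkeeping: pair $\Delta a$ against $u\otimes v$ and use the definition of $\Delta$ as the transpose of $\shuffle$.

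For the equivalence with $\log a\in\lie((E))$, I would work degree by degree, so that all series are finite in each graded component and $\log a=\sum_{n\ge1}(-1)^{n+1}(a-1)^{\otimes n}/n$ is well defined. Three facts are needed.
\begin{enumerate}[label=(\arabic*),leftmargin=2em]
\item $\Delta$ is a continuous algebra morphism for the concatenation product.
\item An element $\ell$ is primitive, meaning $\Delta\ell=\ell\hat\otimes1+1\hat\otimes\ell$, iff $\ell$ is a Lie series. This is Friedrichs' criterion.
\item $\exp$ and $\log$ are mutually inverse bijections between primitive elements and group-like elements.
\end{enumerate}
Fact (3) follows from (1): if $\ell$ is primitive, then $\ell\hat\otimes1$ and $1\hat\otimes\ell$ commute, so
\[
\Delta\exp(\ell)=\exp(\ell\hat\otimes1+1\hat\otimes\ell)=\exp(\ell)\hat\otimes\exp(\ell).
\]
Conversely, applying $\Delta$ to $\log a$ and using $\Delta a=a\hat\otimes a=(a\hat\otimes1)(1\hat\otimes a)$ with commuting factors gives $\Delta\log a=\log a\hat\otimes1+1\hat\otimes\log a$.

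The main obstacle is Friedrichs' criterion. Lie elements are clearly primitive, since generators $e_i$ are primitive and primitives are closed under commutators. The converse is the substantive direction. I would prove it with the Dynkin map $D(e_{i_1}\cdots e_{i_k})=[\cdots[e_{i_1},e_{i_2}],\ldots,e_{i_k}]$ and the Dynkin--Specht--Wever lemma, which states that $D(p)=k\,p$ for homogeneous primitive $p$ of degree $k$. That lemma itself is proved by writing $D$ as concatenation composed with $(\mathrm{id}\otimes\mathrm{ad})\circ\Delta$ together with a grading operator. If this becomes too long, I would cite Ree's theorem (Chen--Ree), which the paper lists among its classical inputs. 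Alternatively, one can avoid Friedrichs altogether for bounded-variation paths: approximate $x$ by piecewise-linear paths. Each linear segment has signature $\exp(x_{t_{j+1}}-x_{t_j})$, and \cref{thm:chen} expresses the signature as a product of exponentials of degree-one elements. The Baker--Campbell--Hausdorff formula then gives a truncated logarithm in the free Lie algebra. Closedness of $\lie^{(m)}$ in each truncation and continuity of the truncated signature under bounded-variation convergence pass this to the limit.
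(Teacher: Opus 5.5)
Your proposal is correct and follows the paper's own route. The shuffle identity makes $w\mapsto\inner{\Sig(x)}{w}$ a character of the shuffle algebra, so $\Sig(x)$ is group-like, and the Lie statement comes from the $\exp/\log$ correspondence between group-like and primitive elements together with Friedrichs' criterion. The paper simply cites this as the Chen--Ree theorem, whereas you spell out the $\exp/\log$ step.

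Two minor corrections to your optional material:
\begin{enumerate}[label=(\arabic*),leftmargin=2em]
\item The identity behind Dynkin--Specht--Wever is that $D$ equals concatenation composed with $(S\otimes Y)\circ\Delta$, where $S$ is the antipode and $Y$ the degree operator, not $\mathrm{ad}$. Then for homogeneous primitive $p$ of degree $k$ you get $D(p)=S(p)Y(1)+S(1)Y(p)=kp$.
\item In the BCH detour, piecewise-linear interpolants of a continuous bounded-variation path need not converge in $1$-variation (a Cantor-function component is a counterexample). So you should either invoke continuity of $\Sig^{(m)}$ under uniform convergence with uniformly bounded variation, or first reparametrize by arc length so that the path is Lipschitz.
\end{enumerate}
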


\begin{proof}
The shuffle identity states that the coordinate functional $w\mapsto\inner{\Sig(x)}{w}$ is a character of the shuffle Hopf algebra. Characters are group-like elements in the dual tensor Hopf algebra. The equivalence between group-like elements and exponentials of primitive elements gives the free-Lie statement; this is the classical Chen--Ree theorem \citep{chen1957,ree1958,reutenauer1993}.
\end{proof}

The dimension of the free Lie algebra at level $k$ over $d$ letters is
\begin{equation}
\ell_d(k)=\frac{1}{k}\sum_{q\mid k}\mu_{\rm Mob}(q)d^{k/q},
\end{equation}
where $\mu_{\rm Mob}$ is the Mobius function.

\begin{corollary}[Witt dimensions]
\label{cor:witt}
The degree-$k$ layer $\lie_k(\R^d)$ of the free Lie algebra has dimension
\begin{equation}
\ell_d(k)=\frac{1}{k}\sum_{q\mid k}\mu_{\rm Mob}(q)d^{k/q}.
\end{equation}
Consequently the step-$N$ free Lie algebra has dimension $\sum_{k=1}^{N}\ell_d(k)$.
\end{corollary}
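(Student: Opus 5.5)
The plan is to derive Witt's formula from the Poincaré--Birkhoff--Witt theorem combined with Möbius inversion. The tensor algebra $T(E)$ is the universal enveloping algebra of the free Lie algebra $\lie(E)=\bigoplus_{k\ge1}\lie_k(E)$; this is the same structural fact underlying \cref{thm:logsig}. PBW then gives an isomorphism of graded vector spaces between $T(E)$ and the symmetric algebra $S(\lie(E))$. Comparing Hilbert series, with $\dim E^{\otimes k}=d^k$, yields the identity of formal power series
\begin{equation}
\frac{1}{1-dt}=\prod_{k\ge1}\left(1-t^k\right)^{-\ell_d(k)},
\end{equation}
where $\ell_d(k)$ here denotes the unknown dimension of $\lie_k(E)$. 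The first step is simply to state this PBW isomorphism and read off the product formula, noting that $\lie_k$ is finite-dimensional because it sits inside $E^{\otimes k}$.

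Next I take logarithms of both sides and expand using $-\log(1-x)=\sum_{m\ge1}x^m/m$:
\begin{equation}
\sum_{n\ge1}\frac{d^n}{n}t^n=\sum_{k\ge1}\ell_d(k)\sum_{m\ge1}\frac{t^{km}}{m}.
\end{equation}
Comparing coefficients of $t^n$ and multiplying by $n$ gives $d^n=\sum_{k\mid n}k\,\ell_d(k)$. Möbius inversion applied to the arithmetic function $k\mapsto k\,\ell_d(k)$ then gives $k\,\ell_d(k)=\sum_{q\mid k}\mu_{\rm Mob}(q)d^{k/q}$, which is the displayed formula.

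The final sentence of the statement follows at once. The step-$N$ free nilpotent Lie algebra is the quotient $\lie(E)/\bigoplus_{k>N}\lie_k(E)$. Since the free Lie algebra is graded and the grading is preserved by the quotient, its dimension is $\sum_{k=1}^N\ell_d(k)$.

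The main obstacle is justifying the input that the tensor algebra is the enveloping algebra of the free Lie algebra, and that PBW applies in graded form. I would cite this from \citep{reutenauer1993} rather than reprove it, as the paper does for Chen--Ree. If a self-contained route is preferred, an alternative is to use a Lyndon-word (Hall) basis: the bracketed Lyndon words of length $k$ form a basis of $\lie_k$. The unique factorization of words into nonincreasing Lyndon words gives the same product formula combinatorially, and the count of Lyndon words is again obtained by Möbius inversion of $d^n=\sum_{k\mid n}k\cdot\#\{\text{Lyndon words of length }k\}$.
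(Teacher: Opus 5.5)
Your proof is correct, and it is the standard derivation of Witt's formula: the graded PBW isomorphism $T(E)\cong S(\lie(E))$ gives $(1-dt)^{-1}=\prod_{k\ge1}(1-t^k)^{-\ell_d(k)}$. Taking logarithms gives $d^n=\sum_{k\mid n}k\,\ell_d(k)$, and M\"obius inversion finishes.

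The paper itself gives no argument. It records the formula after \cref{thm:logsig} as the classical count and relies on the same literature (Reutenauer) it cites for Chen--Ree. Despite the label ``corollary'', the formula does not follow from the group-likeness of signatures; it is a property of the free Lie algebra alone. Your route makes this explicit, since it uses no path-theoretic input, only PBW and a generating-function computation.

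The Lyndon-word alternative costs more: it needs the Chen--Fox--Lyndon factorization and the theorem that bracketed Lyndon words form a basis of $\lie_k$. In return it gives an explicit basis of each layer. That basis is what one needs to realize log-signature coordinates, and it is the Lyndon indexing the paper appeals to in (O1).

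For the final sentence, your quotient description agrees with the paper's $\lie^{(N)}=\bigoplus_{k\le N}\lie_k$ inside $T^{(N)}(E)$. If you instead define the step-$N$ algebra as $\lie/\gamma_{N+1}(\lie)$ via the lower central series, add one line: $\gamma_{N+1}(\lie)=\bigoplus_{k>N}\lie_k$ because $\lie$ is generated in degree one.
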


This explains why the log-signature is a compressed coordinate relative to the full tensor basis.

\section{Reduced paths and the representation ladder}
\label{sec:reduced}

A path followed by its exact reverse has trivial signature. This is not a defect; it says that signatures represent paths modulo a geometrically meaningful cancellation.

\begin{definition}[tree-like equivalence]
A bounded-variation path is tree-like if it factors through a real tree in such a way that the initial and terminal points in the tree coincide. Two paths are tree-like equivalent if their concatenation with one path reversed is tree-like.
\end{definition}

\begin{theorem}[signature uniqueness modulo tree-like equivalence]
\label{thm:hambly}
Two bounded-variation paths have the same signature if and only if they are tree-like equivalent. Hence the signature is injective on the reduced path group.
\end{theorem}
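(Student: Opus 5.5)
This is the Hambly--Lyons theorem, and the plan is to prove the two directions separately, the first being elementary and the second the substance.

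\emph{Sufficiency.} Suppose $x$ and $y$ are tree-like equivalent, so $z=x\ast\overleftarrow{y}$ is tree-like. By \cref{thm:chen}, $\Sig(z)=\Sig(x)\otimes\Sig(\overleftarrow{y})$. Since $\Sig(\overleftarrow{y})=\Sig(y)^{-1}$ (again by Chen applied to $y\ast\overleftarrow{y}$), it suffices to show that a tree-like path has trivial signature. Write $z=\phi\circ h$ with $h:[0,T]\to\mathcal{T}$ a continuous map into a real tree satisfying $h(0)=h(T)$, and $\phi:\mathcal{T}\to E$ Lipschitz on geodesics.
\begin{itemize}[leftmargin=2em]
\item First treat trees with finitely many edges by induction on the number of edges. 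A loop in a finite tree traverses some leaf edge out and back, and each such excursion has signature $\Sig(\gamma)\otimes\Sig(\gamma)^{-1}=1$. By Chen these excursions can be excised.
\item For a general real tree, approximate $h$ uniformly, with bounded length, by loops in finite subtrees. Then pass to the limit level by level, using continuity of truncated signatures under uniform convergence with bounded variation.
\end{itemize}

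\emph{Necessity.} This is the hard direction. Suppose $\Sig(x)=\Sig(y)$; then $z=x\ast\overleftarrow{y}$ has trivial signature, and we must show $z$ is tree-like. The plan is to verify the Hambly--Lyons height-function criterion: $z$ is tree-like if and only if there is a continuous $H\ge0$ with $H(0)=H(T)=0$ and
\[
|z_t-z_s|\le H(s)+H(t)-2\inf_{[s,t]}H.
\]
I would proceed in three steps.
\begin{enumerate}[leftmargin=2em]
\item \textbf{Development into hyperbolic space.} For each $\lambda>0$, develop $z$ into the rescaled hyperbolic space $\mathbb{H}^d$ of curvature $-\lambda^2$ by solving the linear Cartan development equation. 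The development's endpoint is a linear functional of $\Sig(z)$ via the series $\sum_k \lambda^k M^{\otimes k}\Sig(z)^k$, where $M$ is a $\mathfrak{so}(d,1)$-valued representation. Factorial decay of the levels makes this series converge. Hence trivial signature forces every development to close up.
\item \textbf{Define a height function.} In $\mathbb{H}^d$, a closed development whose geodesic chord stays short relative to the path length can only occur by backtracking. Quantitatively, I would use Gromov hyperbolicity to compare the developed path with a tree, and define $H_\lambda(t)$ as the hyperbolic distance from the development to its basepoint, suitably rescaled by $\lambda$.
\item \textbf{Pass to the limit.} Show that the $H_\lambda$ are equicontinuous, using the bounded-variation modulus of $z$, and satisfy the height inequality up to an error $o(1)$ as $\lambda\to\infty$. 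Extract a limit by Arzel\`a--Ascoli.
\end{enumerate}

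The main obstacle is the quantitative hyperbolic-geometry estimate in step 2: that closure of the development at large curvature forces the inequality with a vanishing error. I would first try to reduce to piecewise-linear $z$, where the development is a polygon with small exterior angles and the thin-triangles estimate can be applied segment by segment. I would then extend to general bounded variation by density, checking that the error bounds depend only on length.

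\emph{Injectivity on reduced paths.} This is immediate from the equivalence: the reduced path group is the quotient by tree-like equivalence, and Chen makes $\Sig$ a homomorphism on it with trivial kernel. In the paper, this direction would in practice be cited to \citet{hambly2010}.
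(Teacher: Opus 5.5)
Your reduction to $\Sig(x\ast\overleftarrow{y})=\one$ via Chen is exactly the paper's proof. The paper then simply cites Hambly--Lyons for both remaining facts: tree-like paths have trivial signature, and conversely. Your sufficiency argument is a sound self-contained substitute, provided you choose the approximating loops correctly. For a given factorisation $z=\phi\circ h$, the loop $h$ need not have finite length in the tree; take $z$ constant with a rough height function. So do not ask for $h_n$ to have bounded tree length. Instead, interpolate geodesically between the points $h(t_i)$. The arc $[h(t_i),h(t_{i+1})]$ lies in $h([t_i,t_{i+1}])$ and is met by $h$ in the order of first hitting times. Hence the length of $\phi$ along it is at most the variation of $z$ on $[t_i,t_{i+1}]$, and $\phi\circ h_n(t)\in z([t_i,t_{i+1}])$. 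This gives uniform convergence with uniformly bounded length, and needs no Lipschitz hypothesis on $\phi$.

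The necessity sketch has a genuine gap: it sits exactly where you place the obstacle, and the route you propose cannot close it. Write $d_\lambda$ for the metric of curvature $-\lambda^2$ and $\Gamma^\lambda$ for the development. Thin triangles control only the hyperbolic side: they give $d_\lambda(\Gamma^\lambda_s,\Gamma^\lambda_t)\le H_\lambda(s)+H_\lambda(t)-2\inf_{[s,t]}H_\lambda+O(\lambda^{-1}\log(\lambda L))$. Closure is used only to get $H_\lambda(T)=0$. Everything therefore rests on the comparison $|z_t-z_s|\le\liminf_{\lambda\to\infty}d_\lambda(\Gamma^\lambda_s,\Gamma^\lambda_t)$, and this comparison admits no error bound depending only on $(\lambda,L)$. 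Your claim that a closed development with short chord can only come from backtracking is false at every fixed scale. A Euclidean arc of radius $1/(2\lambda)$ develops into a curve of geodesic curvature $2\lambda>\lambda$, i.e.\ a hyperbolic circle with $\coth(\lambda\rho)=2$ and circumference $2\pi/(\sqrt3\,\lambda)$. So an arc of $2/\sqrt3$ turns develops into a closed loop whatever the initial frame, although its Euclidean chord exceeds a tenth of its length. Concatenating translates of this arc gives, for every $\lambda$, a path of length $L$ with closed development, $\sup_t H_\lambda(t)\le(\ln 3)/\lambda$, and $|z_T-z_0|>L/10$.

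Density cannot repair this either. Piecewise-linear approximants of a bounded-variation path need not have small exterior angles, because of corners and near-cusps. They also lose the trivial signature, so their developments no longer close. What is needed is the comparison for the \emph{fixed} path as $\lambda\to\infty$. That requires path-dependent multiscale control: for instance, a.e.\ differentiability of the arclength parametrisation, so that at scale $1/\lambda$ all but a vanishing proportion of $z$ is nearly straight and develops close to a geodesic. This is the analytic core of the hard direction and is absent from your plan. Citing Hambly--Lyons for it, as the paper does, is what actually completes the argument.
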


\begin{proof}
This is the Hambly--Lyons uniqueness theorem \citep{hambly2010}; the rough-path extension is due to \citet{boedihardjo2016}. The forward direction identifies equality of signatures with triviality of the signature of the concatenation $x*\overleftarrow{y}$; the theorem states that the only bounded-variation paths with trivial signature are tree-like. The reverse direction follows because the signature of a tree-like path is the tensor unit and Chen's identity gives cancellation.
\end{proof}

\begin{principle}[representation ladder]
\label{prin:ladder}
The path coordinate system has the projection chain
\begin{equation}
\boxed{
(t,\mathbf x_t)
\twoheadrightarrow
\Sig(\mathbf x)
\twoheadrightarrow
\Sig^{(m)}(\mathbf x)
\twoheadrightarrow
\E[\Sig^{(m)}(X)].}
\end{equation}
The first projection forgets speed and tree-like cancellation; the second forgets tensor levels above $m$; the third forgets the sample path and keeps law-level moments.
\end{principle}

\begin{table}[t]
\centering
\renewcommand{\arraystretch}{1.25}
\begin{tabular}{@{}p{0.22\linewidth}p{0.31\linewidth}p{0.36\linewidth}@{}}
\toprule
Layer & Object & Information retained \\
\midrule
Path & $(t,\mathbf x_t)$ & parametrized enhanced trajectory \\
Full signature & $\Sig(\mathbf x)$ & reduced path, chronological algebra, all iterated integrals \\
Truncated signature & $\Sig^{(m)}(\mathbf x)$ & finite path features through level $m$ \\
Expected signature & $\E[\Sig^{(m)}(X)]$ & noncommutative moment coordinates of a law \\
\bottomrule
\end{tabular}
\caption{The representation ladder. Each lower layer is a controlled loss of information.}
\label{tab:ladder}
\end{table}

\section{Rough paths, jumps, and the Marcus--\Ito{} distinction}
\label{sec:jumps}

For irregular continuous paths, rough-path theory replaces a path by a finite tower of iterated integrals satisfying Chen's identity and analytic $p$-variation bounds \citep{lyons1998,lcl2007,frizhairer2020}. For \cadlag{} paths, one must additionally choose a convention at jumps.

\begin{definition}[two jump lifts]
Let $X:[0,T]\to\R^d$ be a finite-variation \cadlag{} path. The forward or \Ito{} level-two lift is
\begin{equation}
\mathbb X^{I}_{s,t}
=
\int_{(s,t]}(X_{u-}-X_s)\otimes\dd X_u.
\end{equation}
The Marcus level-two lift is
\begin{equation}
\mathbb X^{M}_{s,t}
=
\mathbb X^{I}_{s,t}
+\frac12\sum_{s<u\le t}\Delta X_u\otimes\Delta X_u.
\label{eq:marcus-correction}
\end{equation}
Equivalently, the Marcus lift replaces each jump by a straight chord traversed over an auxiliary interval and then takes the ordinary geometric signature of the completed path.
\end{definition}

\begin{proposition}[jump correction]
\label{prop:jump-correction}
For every finite-variation \cadlag{} path,
\begin{equation}
\mathbb X^{M}_{s,t}-\mathbb X^{I}_{s,t}
=\frac12\sum_{s<u\le t}\Delta X_u\otimes\Delta X_u.
\end{equation}
The Marcus lift satisfies the geometric shuffle identity; the forward \Ito{} lift satisfies it if and only if the total jump-square correction vanishes at level two.
\end{proposition}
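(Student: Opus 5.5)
The plan has three steps: the displayed identity, the shuffle identity for the Marcus lift, and the characterization of when the \Ito{} lift is geometric.

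\textbf{The displayed identity.} This is immediate from \eqref{eq:marcus-correction}: subtract $\mathbb X^{I}_{s,t}$ from both sides of the definition. The substantive content is the second sentence. I will reduce it to the level-two shuffle relations, i.e.\ the only degree-two consequence of \cref{thm:shuffle}:
\[
\inner{\mathbf X}{i}\inner{\mathbf X}{j}=\inner{\mathbf X}{ij}+\inner{\mathbf X}{ji}.
\]
In tensor form this reads $\mathrm{Sym}(\mathbb X_{s,t})=\tfrac12 (X_t-X_s)\otimes(X_t-X_s)$.

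\textbf{The Marcus lift.} I will use the chord description in the definition. Insert, at each jump time $u$, an auxiliary interval on which the path runs linearly from $X_{u-}$ to $X_u$. For a finite-variation \cadlag{} path the jumps are summable, so the completed path $\tilde X$ is a continuous bounded-variation path after reparametrization. This holds even with countably many jumps, via a time change that allots intervals of length $|\Delta X_u|$. Then \cref{thm:shuffle} applies directly to $\Sig(\tilde X)$, and the shuffle identity holds. It remains to check that the level-two signature of $\tilde X$ equals $\mathbb X^M$. Split $\int (\tilde X-\tilde X_s)\otimes \dd\tilde X$ into the continuous part and the chord parts. On the chord at $u$, the integrand is $(X_{u-}-X_s)+\theta\,\Delta X_u$ for $\theta\in[0,1]$, and the increment is $\Delta X_u\,\dd\theta$. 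This gives $(X_{u-}-X_s)\otimes\Delta X_u+\tfrac12\Delta X_u\otimes\Delta X_u$. Summing over the chords and adding the continuous part gives $\mathbb X^I+\tfrac12\sum\Delta X\otimes\Delta X$.

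\textbf{The \Ito{} lift.} Here I compute the symmetric part directly. Integration by parts for finite-variation \cadlag{} paths gives
\[
(X_t-X_s)^{\otimes 2}=\int_{(s,t]}(X_{u-}-X_s)\otimes\dd X_u+\int_{(s,t]}\dd X_u\otimes(X_{u-}-X_s)+\sum_{s<u\le t}\Delta X_u\otimes\Delta X_u.
\]
Hence
\[
\mathrm{Sym}(\mathbb X^I_{s,t})=\tfrac12(X_t-X_s)^{\otimes2}-\tfrac12\sum_{s<u\le t}\Delta X_u\otimes\Delta X_u.
\]
The shuffle relation at level two therefore holds iff $\sum_{s<u\le t}\Delta X_u\otimes\Delta X_u=0$. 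This is the claimed ``if and only if''. Since the sum is a positive semidefinite matrix, its vanishing is equivalent to the path having no jumps in $(s,t]$, and I may remark on this. The same computation also rederives the Marcus claim, since adding $\tfrac12\sum\Delta X^{\otimes2}$ exactly cancels the defect. I would present this as a consistency check.

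\textbf{Main obstacle.} The hard step is making the chord construction rigorous for infinitely many jumps. One must confirm that the time-changed path has bounded variation, equal to the original variation, and that the chord contributions converge absolutely, which follows from $\sum|\Delta X_u|<\infty$. To avoid this, I would first treat finitely many jumps. I would then pass to the limit by truncating to jumps of size $>\varepsilon$, using the continuity of level-two iterated integrals in the variation norm. Alternatively, the integration-by-parts computation above already proves the Marcus shuffle relation at level two without the chord picture. For the full statement at all levels, I would rely on the chord construction.
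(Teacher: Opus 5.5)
Your proposal is correct and follows essentially the same route as the paper: the chord computation producing $\tfrac12\Delta X_u\otimes\Delta X_u$ per jump, integration by parts on the completed graph for the Marcus lift, and the finite-variation product rule showing that the symmetric part of the forward lift falls short exactly by $\tfrac12\sum_{s<u\le t}\Delta X_u\otimes\Delta X_u$. You go somewhat beyond the paper's proof in two valid ways: you handle countably many jumps via the time change, and your positive-semidefiniteness remark sharpens the criterion to ``no jumps in $(s,t]$''.
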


\begin{proof}
The first identity follows directly from the definition of the Marcus completed graph: on each jump chord of size $h$, the ordinary second iterated integral along the straight segment is $h\otimes h/2$, while the forward integral across the jump contributes zero to the diagonal self-area of that jump. Summing over jumps gives the formula. The shuffle identity at level two requires
\begin{equation}
\inner{\Sig}{i}\inner{\Sig}{j}
=
\inner{\Sig}{ij}+\inner{\Sig}{ji}.
\end{equation}
For the Marcus lift this is the ordinary integration-by-parts identity on the completed graph. For the forward lift, the missing symmetric part is precisely the jump-square sum above. Hence the forward lift is geometric at level two exactly when that correction is zero.
\end{proof}

\begin{proposition}[\Ito{} is not a shuffle character; the defect is the jump bracket]
\label{prop:ito-defect}
Let $X$ be a finite-variation \cadlag{} path in $\R^d$, and let $\Sig^I$ denote its forward lift. For all letters $i,j$,
\begin{equation}
\inner{\Sig^I}{i}\inner{\Sig^I}{j}
-\big(\inner{\Sig^I}{ij}+\inner{\Sig^I}{ji}\big)
=\sum_{0<u\le T}\Delta X_u^i\Delta X_u^j
=:[X^i,X^j]^{\mathrm d}_T.
\label{eq:ito-defect}
\end{equation}
Thus the forward lift fails the shuffle relation exactly by the discrete jump quadratic covariation. In particular, unless this bracket vanishes, $\Sig^I$ is not a character on the shuffle Hopf algebra.
\end{proposition}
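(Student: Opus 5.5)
The plan is a direct computation of the level-two coordinates of the forward lift, i.e. a discrete integration-by-parts formula for finite-variation \cadlag{} paths. Work on $[0,T]$ and set $Y_t=X_t-X_0$, so that $\inner{\Sig^I}{i}=Y^i_T$ and, by the definition of the forward lift,
\begin{equation*}
\inner{\Sig^I}{ij}=\int_{(0,T]}Y^i_{u-}\,\dd Y^j_u .
\end{equation*}
The identity \eqref{eq:ito-defect} is then the statement that $Y^i_TY^j_T$ exceeds $\int Y^i_{-}\dd Y^j+\int Y^j_{-}\dd Y^i$ by exactly the sum of products of simultaneous jumps.

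The first step is to establish the product rule for finite-variation \cadlag{} functions $f,g$ with $f_0=g_0=0$:
\begin{equation*}
f_Tg_T=\int_{(0,T]}f_{u-}\,\dd g_u+\int_{(0,T]}g_{u-}\,\dd f_u+\sum_{0<u\le T}\Delta f_u\,\Delta g_u .
\end{equation*}
I would prove it by Fubini on the Lebesgue--Stieltjes product measure $\dd f\otimes\dd g$ over $(0,T]^2$. Split the square into three regions: $\{u<v\}$, $\{v<u\}$, and the diagonal $\{u=v\}$. The region $\{u<v\}$ integrates to $\int g_{v-}\,\dd f_v$ with the roles arranged appropriately, the region $\{v<u\}$ gives $\int f_{u-}\,\dd g_u$, and the diagonal carries mass only at common atoms, giving $\sum\Delta f_u\Delta g_u$. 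Strict inequalities produce left limits, which is exactly why the forward convention appears.

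The second step is to apply this rule with $f=Y^i$ and $g=Y^j$. This yields \eqref{eq:ito-defect} immediately, with $[X^i,X^j]^{\mathrm d}_T$ identified as the diagonal contribution.

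The third step is the final sentence of the statement. Since $\sum_u\Delta X_u\otimes\Delta X_u$ is symmetric, I would note that a character of the shuffle algebra must satisfy the level-two relation $\inner{\cdot}{i}\inner{\cdot}{j}=\inner{\cdot}{i\shuffle j}$ with $i\shuffle j=ij+ji$. A nonzero bracket entry therefore violates it.

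As a consistency check, \cref{prop:jump-correction} gives the same defect. The Marcus lift is geometric, and symmetrizing its correction term $\tfrac12\sum\Delta X\otimes\Delta X$ in $(i,j)$ produces $\sum\Delta X^i\Delta X^j$.

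The main obstacle is the measure-theoretic care on the diagonal. The argument needs finite total variation, so the measures are finite and Fubini applies. It also needs the observation that for finite-variation paths the diagonal measure $(\dd f\otimes\dd g)(\{u=v\})$ is purely atomic: the continuous parts put no mass on the diagonal, so only simultaneous jumps contribute. The jump sum converges absolutely, being bounded by the product of the total variations.
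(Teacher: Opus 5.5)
Your proposal is correct and follows the paper's proof: translate the initial point to zero, identify $\inner{\Sig^I}{ij}=\int_{(0,T]}X^i_{u-}\dd X^j_u$, apply the finite-variation product rule for \cadlag{} paths, and rearrange to isolate the jump bracket. The only difference is that you derive the product rule yourself, by splitting $(0,T]^2$ under $\dd f\otimes\dd g$ into $\{u<v\}$, $\{v<u\}$ and the diagonal (which only common atoms charge), whereas the paper simply quotes it.
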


\begin{proof}
The forward level-two coordinates are
\begin{equation}
\inner{\Sig^I}{ij}=\int_{(0,T]}X^i_{u-}\dd X^j_u,
\qquad
\inner{\Sig^I}{ji}=\int_{(0,T]}X^j_{u-}\dd X^i_u,
\end{equation}
after translating the initial point to zero. The finite-variation product rule for \cadlag{} paths gives
\begin{equation}
X_T^iX_T^j
=\int_{(0,T]}X^i_{u-}\dd X^j_u
+\int_{(0,T]}X^j_{u-}\dd X^i_u
+\sum_{0<u\le T}\Delta X_u^i\Delta X_u^j.
\end{equation}
Since $X_T^i=\inner{\Sig^I}{i}$ and $X_T^j=\inner{\Sig^I}{j}$, rearrangement gives \eqref{eq:ito-defect}.
\end{proof}

\begin{corollary}[Marcus restores geometricity]
\label{cor:marcus-restores}
At level two the Marcus correction
\begin{equation}
\frac12\sum_{0<u\le T}\Delta X_u\otimes\Delta X_u
\end{equation}
adds one half of the bracket to each ordering. Consequently, the Marcus lift satisfies the shuffle identity at level two, and its completed-graph construction gives a weakly geometric rough path at all levels \citep{chevyrev2019,frizshekhar2017}.
\end{corollary}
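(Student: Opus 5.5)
The plan is to prove the level-two statement by direct substitution into \cref{prop:ito-defect}, and to obtain the all-levels statement from the completed-graph construction together with the classical results for geometric paths, \cref{thm:shuffle,thm:logsig}.

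\textbf{Level two.} Write the Marcus coordinates via \eqref{eq:marcus-correction}:
\begin{equation}
\inner{\Sig^M}{ij}=\inner{\Sig^I}{ij}+\tfrac12[X^i,X^j]^{\mathrm d}_T,
\qquad
\inner{\Sig^M}{ji}=\inner{\Sig^I}{ji}+\tfrac12[X^j,X^i]^{\mathrm d}_T.
\end{equation}
The discrete bracket $\sum_u\Delta X^i_u\Delta X^j_u$ is symmetric in $(i,j)$, so the two corrections sum to the full bracket. Level one is unchanged, since $\inner{\Sig^M}{i}=\inner{\Sig^I}{i}=X^i_T-X^i_0$. Substituting into \eqref{eq:ito-defect} then gives
\begin{equation}
\inner{\Sig^M}{i}\inner{\Sig^M}{j}-\inner{\Sig^M}{ij}-\inner{\Sig^M}{ji}=[X^i,X^j]^{\mathrm d}_T-[X^i,X^j]^{\mathrm d}_T=0,
\end{equation}
which is the level-two shuffle identity. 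The diagonal case $i=j$ is included: the correction there is $\tfrac12(\Delta X^i)^2$ counted twice. Only finitely many jumps exceed any threshold, and finite variation makes $\sum|\Delta X_u|$ converge, so all the sums converge absolutely.

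\textbf{All levels.} Build the completed path $\widetilde X$ as follows. Reparametrize $[0,T]$ by adding an auxiliary interval of length $r_u$ at each jump time $u$, with $\sum r_u<\infty$ (for instance $r_u=|\Delta X_u|$). On that interval, traverse the chord from $X_{u-}$ to $X_u$ linearly. The resulting $\widetilde X$ is continuous and of bounded variation, with total variation at most that of $X$, because chords are straight. Its ordinary signature is therefore a character of the shuffle algebra at every level by \cref{thm:shuffle}, and group-like by \cref{thm:logsig}. This is exactly the weakly geometric property; finite $1$-variation gives a geometric $p$-rough path for all $p\ge1$.

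Two consistency checks remain. First, the level-two component of $\Sig(\widetilde X)$ must equal $\mathbb X^M$. By Chen's identity (\cref{thm:chen}), the chord contributes $\tfrac12\Delta X_u\otimes\Delta X_u$, and the continuous part reproduces the forward integrals; this is the computation already done in \cref{prop:jump-correction}. Second, the answer must not depend on the choice of $r_u$, which holds because the signature is invariant under reparametrization.

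\textbf{Main obstacle.} The delicate step is the construction of $\widetilde X$ when there are countably many jumps, possibly accumulating. One must show that the time change is well defined and that $\widetilde X$ is continuous at the accumulation points. I would proceed in two stages.
\begin{enumerate}[label=(\alph*),leftmargin=2.25em]
\item Treat finitely many jumps first, where the construction is elementary.
\item Approximate by keeping only the jumps of size greater than $\varepsilon$. The completed paths converge in $1$-variation, since the removed jumps have total size tending to zero by finite variation. Continuity of the signature in $1$-variation then passes the shuffle identity to the limit.
\end{enumerate}
For the full cadlag rough-path setting beyond finite variation I would simply invoke \citet{chevyrev2019,frizshekhar2017}, as the statement does.
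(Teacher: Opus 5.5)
Your proposal is correct. The level-two part matches the paper's reasoning. The paper gives no separate proof, and the corollary's own wording (``adds one half of the bracket to each ordering'') is exactly your substitution: the symmetric correction $\tfrac12[X^i,X^j]^{\mathrm d}_T$ is added to both $\inner{\Sig^I}{ij}$ and $\inner{\Sig^I}{ji}$, which cancels the defect in \eqref{eq:ito-defect} while level one is unchanged.

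The real difference is the all-levels claim. The paper simply cites \citet{chevyrev2019,frizshekhar2017}. You instead build the chord-filled path $\widetilde X$ by an explicit time change and reduce weak geometricity to \cref{thm:shuffle,thm:logsig} for a continuous bounded-variation path. You also check, via Chen's identity, that its level-two component reproduces \eqref{eq:marcus-correction}; the paper only asserts this as an ``equivalently'' in the definition. Your route is elementary and self-contained within the paper's toolkit, but it is confined to finite variation---which is the corollary's setting anyway. The citations cover general \cadlag{} $p$-rough paths and semimartingales.

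Two small remarks.
\begin{enumerate}[label=(\arabic*),leftmargin=2.25em]
\item Once the time change with $\sum_u r_u<\infty$ is set up, $\widetilde X$ is already continuous at accumulation points of jumps. The chord endpoints $X_{u-},X_u$ converge to the corresponding one-sided limit of $X$. So \cref{thm:shuffle} applies to $\widetilde X$ directly, and your $\varepsilon$-truncation step (b) is an optional safeguard rather than a needed step.
\item The phrase ``geometric $p$-rough path for all $p\ge1$'' is right for $p>1$. At $p=1$ it holds only after reparametrizing by arc length, since a singular continuous part is not a $1$-variation limit of smooth paths. This is harmless, because the statement asks only for weak geometricity.
\end{enumerate}
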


\begin{theorem}[Marcus uniqueness for nondecreasing positive-jump paths]
\label{thm:marcus-unique}
Let $x,y:[0,T]\to\R$ be finite-variation \cadlag{} paths with $x_0=y_0=0$, nondecreasing continuous parts, and strictly positive jumps. Let
\begin{equation}
\Gamma(x)_t=(t,x_t)
\end{equation}
be the time-augmented path, and let $\widehat\Gamma(x)$ denote its Marcus completed graph. If
\begin{equation}
\Sig(\widehat\Gamma(x))=\Sig(\widehat\Gamma(y)),
\end{equation}
then $x=y$ as \cadlag{} paths. Moreover, the forward \Ito{} lift of a path with at least one nonzero jump is not a geometric signature lift of the same completed graph.
\end{theorem}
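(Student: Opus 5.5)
The plan is to reduce the first claim to Hambly--Lyons uniqueness (\cref{thm:hambly}) and then show that the Marcus completed graph of a time-augmented monotone path is already a reduced path. I will parametrize $\widehat\Gamma(x)$ on an extended interval: at each jump time $u$, insert an auxiliary interval on which the path moves along the straight vertical chord from $(u,x_{u-})$ to $(u,x_u)$. The resulting continuous bounded-variation path in $\R^2$ has first coordinate nondecreasing; it is constant exactly on the chords. Its second coordinate is nondecreasing everywhere, because the continuous part is nondecreasing and the jumps are positive.

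\textbf{Step 1 (no tree-like pieces).} A path both of whose coordinates are nondecreasing is reduced. Any tree-like subexcursion would have to return to an earlier point, and a monotone path can only do that by being constant on the subinterval. So, modulo reparametrization and the removal of constant stretches, $\widehat\Gamma(x)$ is the unique reduced representative of its class. The same holds for $\widehat\Gamma(y)$. By \cref{thm:hambly}, equality of signatures gives tree-like equivalence, and two reduced paths that are tree-like equivalent agree up to reparametrization. Concretely, I would cite the Hambly--Lyons statement that the reduced representative is unique up to reparametrization. Hence the images (traces, traversed in order) of $\widehat\Gamma(x)$ and $\widehat\Gamma(y)$ coincide as oriented curves.

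\textbf{Step 2 (recovering $x$).} The image is a monotone staircase-type curve in the plane. Reading it off recovers $x$: for each $t$, the set of points of the curve with first coordinate $t$ is a vertical segment $\{t\}\times[x_{t-},x_t]$, possibly a single point. \cadlag{} right-continuity then identifies $x_t$ as the top of that segment. The identical recipe applied to $y$ gives $x_t=y_t$ for all $t$. The time augmentation is exactly what makes this work: the first coordinate fixes the parametrization, so no speed information is lost.

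\textbf{Step 3 (second claim).} Take the completed graph's level-two signature and compare it with the forward lift of $\Gamma(x)$. By \cref{prop:jump-correction} they differ by $\tfrac12\sum_u\Delta\Gamma_u\otimes\Delta\Gamma_u$. Since $\Delta\Gamma_u=(0,\Delta x_u)$, the $(2,2)$ entry of this difference is $\tfrac12\sum(\Delta x_u)^2>0$ when there is a nonzero jump. Equivalently, by \cref{prop:ito-defect} the Itô lift violates the shuffle relation for the word pair $(2,2)$, whereas every geometric signature satisfies \cref{thm:shuffle}. So the forward Itô lift is not a geometric signature, let alone that of $\widehat\Gamma(x)$.

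\textbf{Main obstacle.} The delicate step is Step 1: justifying rigorously that a coordinatewise monotone path is reduced, and that Hambly--Lyons yields equality of traces in order rather than mere tree-like equivalence. I would first argue via the height-function characterization of tree-like paths. A tree-like loop $\gamma$ has a continuous height function $h\ge0$ with $h=0$ at both endpoints such that $\gamma_s=\gamma_t$ whenever $h$ on $[s,t]$ is bounded below by $h(s)=h(t)$. Apply this to the concatenation $\widehat\Gamma(x)*\overleftarrow{\widehat\Gamma(y)}$ and use the monotonicity of each half. This forces matching points on the forward and reversed halves, giving the pairing of traces.
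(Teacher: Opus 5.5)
Your proposal is correct and follows essentially the same route as the paper: the completed graph is coordinatewise monotone and therefore tree-reduced; Hambly--Lyons together with uniqueness of the reduced representative gives equality up to reparametrization; the time coordinate then recovers $x$; and the second claim follows from the level-two jump correction of \cref{prop:jump-correction}. Two differences are minor. Your observation that the $(2,2)$ defect equals $\tfrac12\sum_u(\Delta x_u)^2$ is slightly sharper than the paper's appeal to positive jumps, since any nonzero jump makes it positive. The height-function argument you sketch is unnecessary once you cite the Hambly--Lyons uniqueness of tree-reduced representatives, as the paper does.
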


\begin{proof}
The completed graph $\widehat\Gamma(x)$ is coordinatewise nondecreasing: along continuous pieces the time coordinate increases and the spatial coordinate does not decrease; along each jump chord the time coordinate is constant and the spatial coordinate strictly increases. Therefore no nontrivial subpath can be followed by an exact retracing, since retracing would require at least one coordinate to decrease. Thus $\widehat\Gamma(x)$ is tree-reduced. The same holds for $\widehat\Gamma(y)$.

By \cref{thm:hambly}, equality of signatures implies tree-like equivalence. Since both completed graphs are tree-reduced, tree-like equivalence reduces to equality up to increasing reparametrization. The first coordinate of the completed graph records the original physical time along horizontal pieces, and the vertical chords occur at the corresponding fixed time. Hence equality up to increasing reparametrization recovers the same jump times and the same spatial levels. Therefore $x=y$.

For the final assertion, if a nonzero jump $h$ occurs, \cref{prop:jump-correction} shows that the forward level-two lift differs from the geometric completed-graph lift by $h\otimes h/2$ plus the remaining jump corrections. Thus the forward lift violates the level-two shuffle identity unless the total correction vanishes. With positive jumps it cannot vanish.
\end{proof}

\begin{remark}
The monotonicity assumption is sufficient, not necessary. It covers cumulative event counts, cumulative traded volume, cumulative order-flow imbalance in a fixed direction, and default-count processes. General price paths with bid--ask bounce need the full reduced-path condition rather than coordinatewise monotonicity.
\end{remark}

\section{Discrete streams and continuous signatures}
\label{sec:discrete}

Discrete streams are represented by iterated sums, whereas continuous chord interpolations are represented by iterated integrals. Hoffman's exponential identifies these two presentations at the Hopf-algebraic level \citep{hoffman2000,diehl2020}. The key point is exact: for pure-jump paths the forward \Ito{} lift is the iterated-sums character, while the Marcus lift is its Hoffman image.

\begin{figure}[t]
\centering
\includegraphics[width=0.92\linewidth]{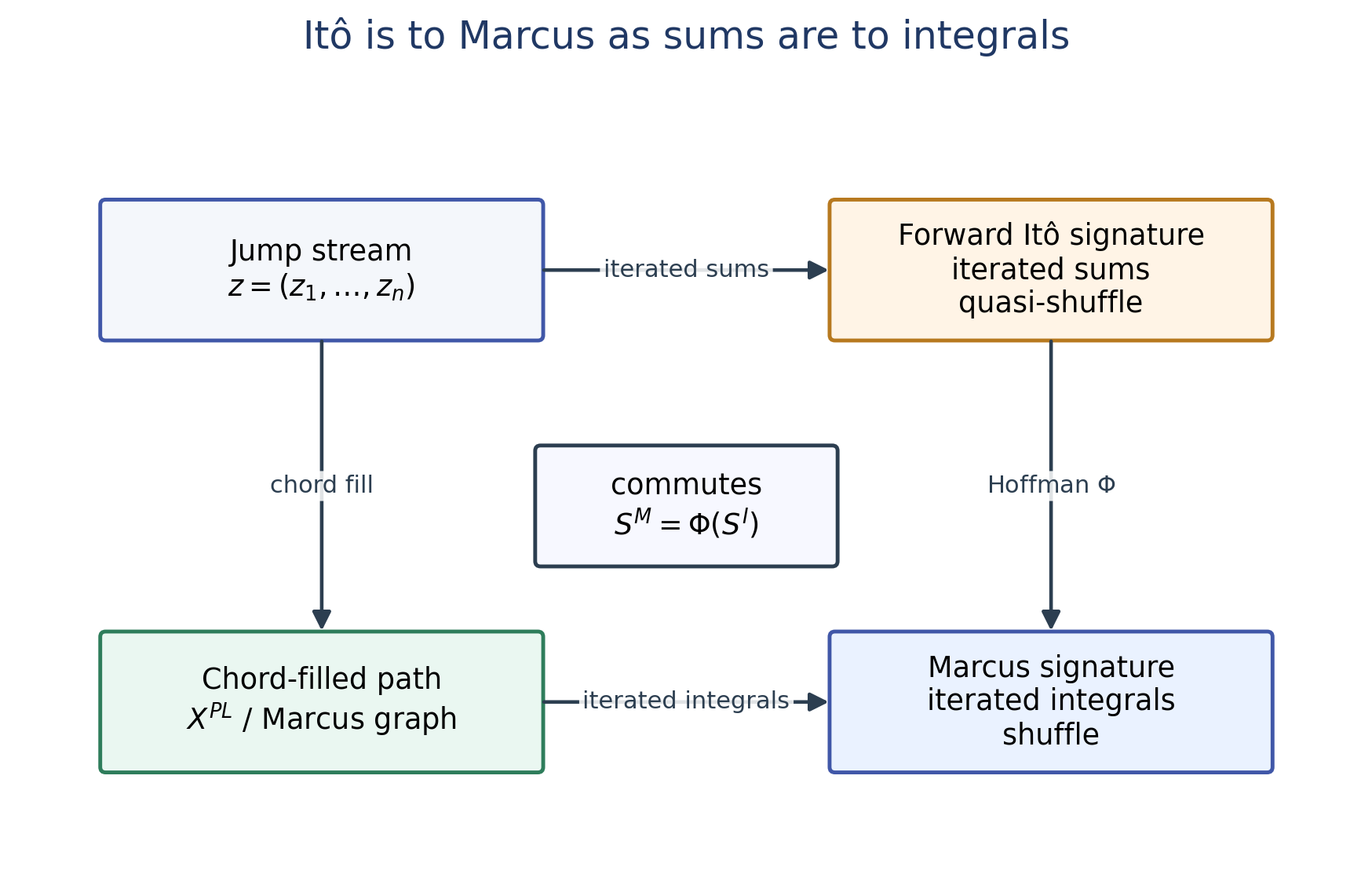}
\caption{The Hopf square. The forward \Ito{} lift of a pure-jump path is the iterated-sums signature; the Marcus chord-fill lift is Hoffman's exponential image and is therefore the shuffle-geometric version.}
\label{fig:hopf-square}
\end{figure}

\begin{definition}[Hoffman exponential]
\label{def:hoffman}
On the enlarged alphabet $\overline E=\bigoplus_{m\ge1}E^{\odot m}$ of symmetric powers, in which a merged letter $[i_1\cdots i_r]$ denotes the symmetric product, let $\Phi$ be Hoffman's exponential: the graded Hopf-algebra isomorphism from the quasi-shuffle to the shuffle algebra acting on a word $w$ by
\begin{equation}
\Phi(w)=\sum_{w=B_1\mid\cdots\mid B_j}\frac{1}{|B_1|!\cdots|B_j|!}\,[B_1]\cdots[B_j],
\label{eq:hoffman-exp}
\end{equation}
the sum ranging over compositions of $w$ into consecutive blocks. On the jump data a merged letter is evaluated by $[i_1\cdots i_r]\mapsto\sum_a z_a^{i_1}\cdots z_a^{i_r}$.
\end{definition}

\begin{proposition}[Forward lift of a pure-jump path equals iterated sums]
\label{prop:ito-sums}
Let $X:[0,T]\to E$ be a finite-variation pure-jump \cadlag{} path, constant between jumps, with jumps $z_a=\Delta X_{\tau_a}$ at $0<\tau_1<\cdots<\tau_n\le T$, and let $\Sig^I(X)$ be its forward lift
\begin{equation}
\Sig^I_{0,t}=\one+\int_{(0,t]}\Sig^I_{0,u^-}\otimes\dd X_u .
\end{equation}
Then for every word $w=(i_1,\ldots,i_k)$,
\begin{equation}
\inner{\Sig^I(X)_{0,T}}{w}
=\sum_{1\le a_1<\cdots<a_k\le n}z_{a_1}^{i_1}\cdots z_{a_k}^{i_k}
=\inner{\Sig^{\mathrm{sum}}(z)}{w},
\qquad
\Sig^{\mathrm{sum}}(z)=\oprod_{a=1}^n(\one+z_a).
\label{eq:ito-sums}
\end{equation}
In particular $\Sig^I(X)$ is a character on the quasi-shuffle Hopf algebra.
\end{proposition}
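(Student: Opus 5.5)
The plan is to solve the forward linear integral equation jump by jump, using that $X$ is piecewise constant. Since $\dd X_u=\sum_a z_a\,\delta_{\tau_a}(\dd u)$ as a vector-valued measure on $(0,T]$, the equation
\begin{equation}
\Sig^I_{0,t}=\one+\int_{(0,t]}\Sig^I_{0,u^-}\otimes\dd X_u
\end{equation}
reduces to a finite recursion. First, $\Sig^I_{0,t}$ is constant on each interval $[\tau_a,\tau_{a+1})$, and equals $\one$ on $[0,\tau_1)$. Second, at each jump time, $\Sig^I_{0,\tau_a}=\Sig^I_{0,\tau_a^-}+\Sig^I_{0,\tau_a^-}\otimes z_a=\Sig^I_{0,\tau_a^-}\otimes(\one+z_a)$. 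By induction on $a$ we then obtain $\Sig^I_{0,T}=(\one+z_1)\otimes\cdots\otimes(\one+z_n)=\oprod_{a=1}^n(\one+z_a)$. Existence and uniqueness of the solution are immediate, since the measure is finite and atomic, so the recursion determines the solution.

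Next we extract the coordinate along $w=i_1\cdots i_k$ by expanding the ordered product. Each factor $\one+z_a$ contributes either $\one$ or the level-one tensor $z_a$. A term of level $k$ is therefore obtained by choosing indices $a_1<\cdots<a_k$ from which to take $z$, and it equals $z_{a_1}\otimes\cdots\otimes z_{a_k}$. Its pairing with $e_w$ is $z_{a_1}^{i_1}\cdots z_{a_k}^{i_k}$. Summing over all choices gives the displayed iterated-sum formula. This matches the usual definition of $\Sig^{\mathrm{sum}}(z)$. As an alternative check, one can verify the formula directly from the recursive iterated-integral definition: $\inner{\Sig^I_{0,t}}{wi}=\int_{(0,t]}\inner{\Sig^I_{0,u^-}}{w}\dd X^i_u$, and the strict left limit $u^-$ is exactly what enforces the strict inequalities $a_j<a_{j+1}$.

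For the quasi-shuffle character claim, I would prove $\inner{S}{u}\inner{S}{v}=\inner{S}{u\mathbin{\star}v}$, where $\star$ is the quasi-shuffle product and merged letters are evaluated as in \cref{def:hoffman} by $[ij]\mapsto\sum_a z_a^iz_a^j$. The argument is the discrete analogue of the shuffle proof of \cref{thm:shuffle}. The product of two sums over strictly increasing index tuples $a_1<\cdots<a_p$ and $b_1<\cdots<b_q$ is a sum over pairs of tuples. Each pair is classified by the relative order of the merged index set, which is now only weakly ordered, because some $a_r$ may equal some $b_s$. Strict interlacings give the shuffle terms, and coincident indices give merged letters. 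This classification is a bijection onto the terms of $u\star v$.

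An alternative route to the character property is that each factor $\one+z_a$ is group-like for the quasi-shuffle coproduct dual structure, and products of group-like elements are group-like. The main obstacle I expect is purely bookkeeping: stating the quasi-shuffle product precisely on the enlarged alphabet $\overline E$ and confirming that the coincidence classes correspond exactly to the merged letters. I would handle this by induction on $|u|+|v|$ using the recursive definition $ua\star vb=(u\star vb)a+(ua\star v)b+(u\star v)[ab]$, splitting the last index of the product sum into the three cases $a_p>b_q$, $a_p<b_q$ and $a_p=b_q$.
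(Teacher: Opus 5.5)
Your proof is correct. It rests on the same mechanism as the paper's (the atomic measure $\dd X$ together with the left limit), but you organize the recursion differently.

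\textbf{Coordinate formula.} The paper inducts on word length at the level of coordinates. It uses $\inner{\Sig^I_{0,T}}{i_1\cdots i_k}=\sum_a\inner{\Sig^I_{0,\tau_a^-}}{i_1\cdots i_{k-1}}\,z_a^{i_k}$, reads off $a_{k-1}<a$ from the left limit, and only afterwards identifies the iterated sums with the coordinates of $\oprod_a(\one+z_a)$. You instead induct on the jump index at the tensor level. You first obtain the discrete Chen step $\Sig^I_{0,\tau_a}=\Sig^I_{0,\tau_a^-}\otimes(\one+z_a)$, hence the ordered product, and then expand it.

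What each buys:
\begin{itemize}
\item Your version makes existence and uniqueness of the solution explicit.
\item It produces directly the product form that \cref{thm:hopf-square} compares against $\oprod_a\expt(z_a)$.
\item The paper's coordinate recursion, which you list as a cross-check, isolates the left limit as the sole source of strictness. That is the point the paper reuses in its remark on the \Ito{}--Marcus gap.
\end{itemize}

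\textbf{Quasi-shuffle character claim.} The paper asserts this only with ``in particular'' and gives no argument, so your proof adds something. Your three-case split $a_p>b_q$, $a_p<b_q$, $a_p=b_q$ against the recursion $ua\star vb=(u\star vb)a+(ua\star v)b+(u\star v)[ab]$ is the right argument. It needs the iterated-sum coordinates extended to words over $\overline{E}$, with a merged letter evaluated at a single index: $z_a^{[i_1\cdots i_r]}=z_a^{i_1}\cdots z_a^{i_r}$. Your main route already does this through the coincident indices.

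\textbf{Caution on your alternative route.} The factor $\one+z_a\in T((E))$ is not itself quasi-shuffle group-like. Pairing it with $i\star j=ij+ji+[ij]$ gives $0$, not $z_a^iz_a^j$. You must use the extended factor $\one+\sum_{\ell}z_a^{\ell}\,\ell$, with $\ell$ ranging over all letters of $\overline{E}$, merged ones included. With that correction, closure of group-like elements under concatenation gives the claim.
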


\begin{proof}
Induct on $k$; both sides are $1$ at $k=0$. Since $\dd X=\sum_a z_a\,\delta_{\tau_a}$, the defining recursion gives
\begin{equation}
\inner{\Sig^I_{0,T}}{(i_1\cdots i_k)}=
\sum_{a=1}^n \inner{\Sig^I_{0,\tau_a^-}}{(i_1\cdots i_{k-1})}\,z_a^{i_k}.
\end{equation}
The integrand is evaluated at the left limit $\tau_a^-$, so by the inductive hypothesis
\begin{equation}
\inner{\Sig^I_{0,\tau_a^-}}{(i_1\cdots i_{k-1})}
=\sum_{a_1<\cdots<a_{k-1}<a}z_{a_1}^{i_1}\cdots z_{a_{k-1}}^{i_{k-1}}.
\end{equation}
The strict inequality $a_{k-1}<a$ is forced by the left limit, since only jumps strictly before $\tau_a$ are seen. Multiplying by $z_a^{i_k}$ and summing over $a$ gives
\begin{equation}
\sum_{a_1<\cdots<a_k}z_{a_1}^{i_1}\cdots z_{a_k}^{i_k},
\end{equation}
which is exactly the coordinate of $\Sig^{\mathrm{sum}}(z)=\oprod_a(\one+z_a)$.
\end{proof}

\begin{proposition}[Hoffman transform is exact on the interpolant]
\label{prop:hoffman-exact}
Let $z_1,\ldots,z_n\in\R^d$ be increments and let $X^{\rm PL}$ be the piecewise-linear path that traverses the increment $z_a$ on the $a$-th segment. Then
\begin{equation}
\Sig(X^{\rm PL})_{0,T}
=\expt(z_1)\otimes\cdots\otimes\expt(z_n)
=\Phi\big(\Sig^{\rm sum}(z_1,\ldots,z_n)\big),
\label{eq:hoffman-exact}
\end{equation}
where $\Phi$ is Hoffman's exponential from the quasi-shuffle character to the shuffle character. Thus the Hoffman step has no discretization error; it is an algebraic identity.
\end{proposition}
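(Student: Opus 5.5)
The proof splits into two equalities. First, the signature of the piecewise-linear path equals the product of tensor exponentials. Second, that product equals $\Phi$ applied to the iterated-sums character. The first equality is pure Chen. By \cref{thm:chen}, the signature of the concatenation of the $n$ linear segments is the ordered tensor product of the segment signatures. For a single straight segment with increment $z$, parametrized linearly on $[0,1]$, the level-$k$ iterated integral is $z^{\otimes k}\,\mathrm{vol}\{0<u_1<\cdots<u_k<1\}=z^{\otimes k}/k!$. So the segment signature is $\expt(z)$, which gives $\Sig(X^{\rm PL})_{0,T}=\expt(z_1)\otimes\cdots\otimes\expt(z_n)$.

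For the second equality I compare word coordinates. Expanding the product of exponentials, the coefficient of a word $w=i_1\cdots i_k$ is
\begin{equation}
\sum_{\substack{1\le a_1\le\cdots\le a_k\le n}}\ \prod_{a=1}^n\frac{1}{m_a!}\,z_{a_1}^{i_1}\cdots z_{a_k}^{i_k},
\end{equation}
where $m_a$ is the number of indices $r$ with $a_r=a$. A weakly increasing index sequence is the same as a composition of $w$ into consecutive nonempty blocks $B_1\mid\cdots\mid B_j$ together with strictly increasing segment labels $b_1<\cdots<b_j$; block $B_\ell$ collects the letters assigned to segment $b_\ell$. Grouping the sum by the composition therefore gives
\begin{equation}
\sum_{w=B_1\mid\cdots\mid B_j}\frac{1}{|B_1|!\cdots|B_j|!}\sum_{b_1<\cdots<b_j}\prod_{\ell=1}^j\prod_{r\in B_\ell}z_{b_\ell}^{i_r}.
\end{equation}
In the inner sum, each block contributes the product of the coordinates $z_{b_\ell}^{i_r}$ over its letters. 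This is the evaluation of the merged letter $[B_\ell]$ at a single jump. Summing over $b_1<\cdots<b_j$ is therefore exactly the iterated-sums coordinate of the word $[B_1]\cdots[B_j]$ over the enlarged alphabet $\overline E$, in the form of \eqref{eq:ito-sums} extended to merged letters. By \eqref{eq:hoffman-exp}, the resulting expression is $\inner{\Phi(\Sig^{\rm sum}(z))}{w}$.

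The main obstacle is a convention issue rather than a computation. One must make precise in what sense ``$\Phi(\Sig^{\rm sum})$'' denotes a tensor series. As written, $\Phi$ acts on words, that is, on the quasi-shuffle Hopf algebra. The tensor series $\Phi(\Sig^{\rm sum})$ is then defined by $\inner{\Phi(\Sig^{\rm sum})}{w}:=\inner{\Sig^{\rm sum}}{\Phi(w)}$, where $\Sig^{\rm sum}$ is extended as a character to the merged alphabet by the evaluation rule in \cref{def:hoffman}. I will state this transpose convention explicitly at the start of the proof.

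With that convention, one point must be checked carefully. The merged-letter evaluation $[i_1\cdots i_r]\mapsto\sum_a z_a^{i_1}\cdots z_a^{i_r}$ sums over the single jump index $a$. For this to be consistent with the block decomposition, the coordinate of a word of merged letters must be an iterated sum over strictly increasing jump indices $b_1<\cdots<b_j$, and each merged letter must be evaluated at its own single index $b_\ell$. A product of separate full sums $\sum_a$ would not match. Once this is fixed, the combinatorial bijection above is the whole proof.

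Finally, I will note that since $\Phi$ is a Hopf isomorphism from quasi-shuffle to shuffle, the resulting series is a shuffle character, in agreement with \cref{thm:logsig}.
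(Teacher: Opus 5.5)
Your proposal is correct and follows the paper's argument: the simplex volume gives each segment signature as $\expt(z_a)$, Chen's identity multiplies these in order, and the product's word coordinates are regrouped by the maximal constant runs of the weakly increasing segment labels into Hoffman's block-composition expansion — the paper's terse proof here, spelled out in step (c) of the proof of \cref{thm:hopf-square}. Your explicit transpose convention $\inner{\Phi(\Sig^{\rm sum})}{w}=\inner{\Sig^{\rm sum}}{\Phi(w)}$, with each merged letter evaluated at a single index inside an iterated sum over strictly increasing indices, is the right reading of \cref{def:hoffman}; it matches how the paper evaluates $\inner{\Sig^I}{[B_1]\cdots[B_j]}$.
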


\begin{proof}
On a straight segment with increment $z_a$, every iterated integral is a symmetric tensor power, so the segment signature is $\expt(z_a)$. Chen's identity multiplies these segment signatures in chronological order. The coordinate formula is the same block-composition expansion as in \cref{def:hoffman}: repeated indices inside a segment become merged letters, weighted by the factorial denominator coming from integration over the simplex. Hence Hoffman's exponential sends the strict iterated-sums character to the piecewise-linear shuffle character \citep{hoffman2000,diehl2020}.
\end{proof}

\begin{theorem}[It\^o is to Marcus as sums are to integrals]
\label{thm:hopf-square}
With $X$ as in \cref{prop:ito-sums} and $\Sig^M(X)=\Sig(X^{\mathrm{PL}})$ the signature of its chord interpolation, equivalently the Marcus lift,
\begin{equation}
\Sig^M(X)=\Phi\big(\Sig^I(X)\big).
\end{equation}
Equivalently, $\Sig^M$ is a shuffle character and the square
\begin{equation}
\begin{array}{ccc}
(z_1,\ldots,z_n) & \xrightarrow{\ \mathrm{iterated\ sums}\ } & \Sig^I\quad(\mathrm{quasi\mbox{-}shuffle})\\[1.5mm]
\big\downarrow{\scriptstyle\mathrm{chord\ fill}} & & \big\downarrow{\scriptstyle\Phi}\\[1.5mm]
X^{\mathrm{PL}} & \xrightarrow{\ \mathrm{iterated\ integrals}\ } & \Sig^M\quad(\mathrm{shuffle})
\end{array}
\label{eq:hopf-square}
\end{equation}
commutes.
\end{theorem}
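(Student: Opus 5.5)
The plan is to obtain the theorem by composing \cref{prop:ito-sums} with \cref{prop:hoffman-exact}. There are three steps.

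First, by \cref{prop:ito-sums}, the forward lift of the pure-jump path $X$ equals the iterated-sums character of its jump sizes:
\begin{equation}
\Sig^I(X)_{0,T}=\Sig^{\mathrm{sum}}(z_1,\ldots,z_n)=\oprod_{a=1}^n(\one+z_a).
\end{equation}
This identifies the top arrow of the square with the right-hand column's source.

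Second, I will justify $\Sig^M(X)=\Sig(X^{\mathrm{PL}})$ directly from the definition of the Marcus lift. The completed graph of $X$ has two kinds of pieces:
\begin{itemize}[leftmargin=2em]
\item the constant pieces between jumps, which have trivial signature $\one$;
\item the straight chords across each jump $z_a$.
\end{itemize}
By Chen's identity (\cref{thm:chen}), the Marcus signature is therefore the chronological product $\expt(z_1)\otimes\cdots\otimes\expt(z_n)$. This is reparametrization-invariant, so it coincides with the signature of the piecewise-linear path $X^{\mathrm{PL}}$.

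Third, \cref{prop:hoffman-exact} gives
\begin{equation}
\expt(z_1)\otimes\cdots\otimes\expt(z_n)=\Phi\big(\Sig^{\mathrm{sum}}(z)\big).
\end{equation}
Chaining the three equalities yields $\Sig^M(X)=\Phi(\Sig^I(X))$, which is commutativity of the square. That $\Sig^M$ is a shuffle character then follows from \cref{thm:shuffle} applied to the bounded-variation path $X^{\mathrm{PL}}$. Alternatively, $\Phi$ is a Hopf isomorphism carrying quasi-shuffle characters (\cref{prop:ito-sums}) to shuffle characters.

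The main obstacle is making the Hoffman identity in \cref{prop:hoffman-exact} genuinely precise rather than a citation. I would verify it coordinatewise, as follows.
\begin{itemize}[leftmargin=2em]
\item Expand the product of exponentials: for a word $w$, the coefficient of $\prod_a\expt(z_a)$ is a sum over weakly increasing segment assignments $a_1\le\cdots\le a_k$.
\item Group consecutive letters landing in the same segment into blocks $B_1\mid\cdots\mid B_j$ assigned to strictly increasing segments. Each block of size $r$ contributes $\frac{1}{r!}\prod z_a^{i}$.
\item Observe that this is exactly the evaluation of $\sum \frac{1}{|B_1|!\cdots}[B_1]\cdots[B_j]$ against the strict iterated sums, with merged letters evaluated on the jump data as in \cref{def:hoffman}.
\end{itemize}
A secondary point is to check that the pairing conventions match, since $\Phi$ acts on words (the dual side) while the signatures are tensor series. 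I would state the identity as $\inner{\Sig^M}{w}=\inner{\Sig^I}{\Phi(w)}$ and note that this is what the displayed equality means.
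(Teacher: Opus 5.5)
Your proposal is correct and follows essentially the same route as the paper: segment signatures $\expt(z_a)$ multiplied via Chen, then regrouping weakly increasing segment assignments into block compositions with weights $\prod_r 1/|B_r|!$ (the paper's step (c)), and \cref{prop:ito-sums} to identify the strict iterated sums with $\Sig^I$. Reading the identity as $\inner{\Sig^M}{w}=\inner{\Sig^I}{\Phi(w)}$ is right, and it is exactly what the paper's coordinate computation establishes even though the statement writes $\Phi(\Sig^I)$.
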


\begin{proof}
\emph{(a) Segment.} For a straight segment $\gamma(t)=p+tz$, $t\in[0,1]$, one has $\dd\gamma=z\,\dd t$ and
\begin{equation}
\Sig^k(\gamma)=z^{\otimes k}\!\int_{0<t_1<\cdots<t_k<1}\!\dd t_1\cdots\dd t_k=\frac{z^{\otimes k}}{k!},
\end{equation}
hence $\Sig(\gamma)=\expt(z)$.

\emph{(b) Chen.} The chord path $X^{\mathrm{PL}}$ concatenates constant pieces, whose signature is $\one$, with the chords of increments $z_a$. By Chen's identity and reparametrization invariance,
\begin{equation}
\Sig^M(X)=\oprod_{a=1}^n\expt(z_a),
\end{equation}
and therefore
\begin{equation}
\inner{\Sig^M(X)}{(i_1\cdots i_k)}
=\sum_{a_1\le\cdots\le a_k}\frac{1}{\prod_a \mu_a!}\,z_{a_1}^{i_1}\cdots z_{a_k}^{i_k},
\qquad
\mu_a=\#\{p:a_p=a\}.
\end{equation}

\emph{(c) Identification.} Group a weakly increasing tuple into its maximal constant runs: distinct values $b_1<\cdots<b_j$ with run-lengths $\ell_r$ partition the word positions into consecutive blocks $B_1,\ldots,B_j$ with $|B_r|=\ell_r=\mu_{b_r}$, weight $\prod_r 1/\ell_r!$, and value
\begin{equation}
\prod_r\prod_{p\in B_r}z_{b_r}^{i_p}=\prod_r z_{b_r}^{B_r}.
\end{equation}
Summing over $b_1<\cdots<b_j$ gives $\inner{\Sig^I}{[B_1]\cdots[B_j]}$, and summing over all block compositions of $w$ with weights $\prod_r 1/\ell_r!$ is precisely $\Phi$ applied to $\Sig^I$ by \cref{def:hoffman}. Combining this with \cref{prop:ito-sums} gives $\Sig^M(X)=\Phi(\Sig^I(X))$.
\end{proof}

\begin{remark}[level-two diagonal and the jump correction]
The two compositions of $(i,j)$ are $(i)(j)$ and the merged block $[ij]$, so
\begin{equation}
\inner{\Sig^M}{(i,j)}=\sum_{a<b}z_a^i z_b^j+\frac12\sum_a z_a^i z_a^j,
\end{equation}
where the diagonal $\frac12\sum_a z_a^{\otimes2}$ is exactly the Marcus correction $\frac12\sum_u\Delta X_u\otimes\Delta X_u$ of \cref{eq:marcus-correction}. Level one coincides,
\begin{equation}
\inner{\Sig^I}{(i)}=\inner{\Sig^M}{(i)}=X_T^i-X_0^i;
\end{equation}
the left limit in \cref{prop:ito-sums} is the sole source of strictness, hence of the entire \Ito{}--Marcus gap. For a general finite-variation \cadlag{} path the identity holds verbatim on the pure-jump part, with $\Sig^M$ the signature of the completed graph. For countably many jumps with $\sum_a\norm{z_a}<\infty$, the ordered products converge in $T((E))$ and both identities persist.
\end{remark}

Let $z_1,\ldots,z_n\in\R^d$ be increments, and define
\begin{equation}
E_m(z)=\sum_{k=0}^{m}\frac{z^{\otimes k}}{k!},
\qquad
D_m(z)=1+z.
\end{equation}
Let
\begin{equation}
S_m^{\rm PL}(z)=\prod_{i=1}^n E_m(z_i),
\qquad
S_m^{\rm disc}(z)=\prod_{i=1}^n D_m(z_i),
\end{equation}
where products are tensor products and truncated at level $m$. The first object is the level-$m$ signature of the piecewise-linear path with increments $z_i$; the second is the strict iterated-sums tensor.

\begin{theorem}[quantitative discrete-to-continuous exponential correction]
\label{thm:discrete-bound}
Equip $T^{(m)}(\R^d)$ with the projective tensor norm summed over levels. Let
\begin{equation}
V=\sum_{i=1}^n\norm{z_i},
\qquad
\delta=\max_i\norm{z_i}.
\end{equation}
Then
\begin{equation}
\norm{S_m^{\rm PL}(z)-S_m^{\rm disc}(z)}
\le
\exp(2V)\sum_{i=1}^n\sum_{k=2}^{m}\frac{\norm{z_i}^k}{k!}.
\end{equation}
If $\delta\le 1$, then
\begin{equation}
\norm{S_m^{\rm PL}(z)-S_m^{\rm disc}(z)}
\le
\exp(2V+1)\,\delta V.
\end{equation}
In particular, for increments of a Lipschitz path sampled on a mesh of size $\Delta$, with $\norm{z_i}\le L\Delta$, the error is $O(\Delta)$ at fixed total variation.
\end{theorem}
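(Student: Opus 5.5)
The plan is a telescoping argument in the truncated tensor algebra $T^{(m)}(\R^d)$, which is a Banach algebra under the projective norm summed over levels, so $\norm{ab}\le\norm a\norm b$ for truncated products. Writing $E_i=E_m(z_i)$ and $D_i=D_m(z_i)$, we telescope
\begin{equation}
\prod_{i=1}^n E_i-\prod_{i=1}^n D_i
=\sum_{i=1}^n D_1\cdots D_{i-1}\,(E_i-D_i)\,E_{i+1}\cdots E_n ,
\end{equation}
with all products truncated at level $m$. Submultiplicativity then bounds each summand by $\prod_{j<i}\norm{D_j}\cdot\norm{E_i-D_i}\cdot\prod_{j>i}\norm{E_j}$.

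The factor bounds are elementary.
\begin{itemize}
\item First, $\norm{D_j}\le 1+\norm{z_j}\le e^{\norm{z_j}}$.
\item Second, $\norm{E_j}\le\sum_{k\le m}\norm{z_j}^k/k!\le e^{\norm{z_j}}$, because projective norms satisfy $\norm{z^{\otimes k}}=\norm z^k$.
\item Third, $E_i-D_i=\sum_{k=2}^m z_i^{\otimes k}/k!$, so $\norm{E_i-D_i}\le\sum_{k=2}^m\norm{z_i}^k/k!$.
\end{itemize}
The outer products are therefore at most $\exp\big(\sum_{j\ne i}\norm{z_j}\big)\le e^{V}\le e^{2V}$. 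Summing over $i$ gives the first inequality, with room to spare: $e^{V}$ already suffices and the stated $e^{2V}$ is weaker.

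For the second inequality, assume $\delta\le1$. Then
\begin{equation}
\sum_{k=2}^m\frac{\norm{z_i}^k}{k!}\le \norm{z_i}^2\sum_{k\ge2}\frac{1}{k!}\le (e-2)\,\norm{z_i}^2\le e\,\delta\,\norm{z_i},
\end{equation}
using $\norm{z_i}^{k-2}\le1$. Summing over $i$ gives $e\,\delta V$, hence the bound $\exp(2V+1)\,\delta V$.

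For the Lipschitz corollary, $\delta\le L\Delta$ and $V$ is fixed, so the error is $O(\Delta)$.

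The only point needing care is justifying submultiplicativity and $\norm{z^{\otimes k}}=\norm z^k$ in the truncated algebra with the norm summed over levels. This follows from the cross-norm property of the projective tensor norm, $\norm{a\otimes b}\le\norm a\norm b$ across levels. Truncation only drops terms, so the bound persists: $\norm{\pi_{\le m}(ab)}\le\sum_{p+q\le m}\norm{a_p}\norm{b_q}\le\norm a\norm b$.

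I expect no genuine obstacle. The main thing to state explicitly is that a cross-norm gives equality for $z^{\otimes k}$, or at least the inequality $\le$, which is all the argument needs.
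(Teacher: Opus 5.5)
Your proposal is correct and follows essentially the paper's argument. It uses the same telescoping identity, with your $D$'s on the left and $E$'s on the right, the mirror of the paper's ordering. It uses the same factor bounds $e^{\norm{z_j}}$ from submultiplicativity of the projective norm, and the same estimate $\sum_{k\ge2}\norm{z_i}^k/k!\le e\,\delta\norm{z_i}$ for $\delta\le1$. Your remark that the outer factors are in fact bounded by $e^{V}$, so the stated $e^{2V}$ is loose, is also correct.
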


\begin{proof}
The projective tensor norm is submultiplicative. Hence
\begin{equation}
\norm{E_m(z_i)}\le \sum_{k=0}^{m}\frac{\norm{z_i}^k}{k!}\le e^{\norm{z_i}},
\qquad
\norm{D_m(z_i)}\le 1+\norm{z_i}\le e^{\norm{z_i}}.
\end{equation}
Using the telescoping product identity,
\begin{equation}
\prod_{i=1}^n A_i-\prod_{i=1}^n B_i
=
\sum_{j=1}^n
\left(\prod_{i<j}A_i\right)(A_j-B_j)\left(\prod_{i>j}B_i\right),
\end{equation}
with $A_i=E_m(z_i)$ and $B_i=D_m(z_i)$, gives
\begin{equation}
\norm{S_m^{\rm PL}-S_m^{\rm disc}}
\le
\sum_{j=1}^n
\exp\!\left(\sum_{i<j}\norm{z_i}\right)
\norm{E_m(z_j)-D_m(z_j)}
\exp\!\left(\sum_{i>j}\norm{z_i}\right).
\end{equation}
The exponential factors are bounded by $e^{2V}$, and
\begin{equation}
\norm{E_m(z_j)-D_m(z_j)}
\le
\sum_{k=2}^{m}\frac{\norm{z_j}^k}{k!},
\end{equation}
which proves the first bound. If $\delta\le1$, then
\begin{equation}
\sum_{k=2}^{m}\frac{\norm{z_i}^k}{k!}
\le
\norm{z_i}\delta\sum_{k=2}^{\infty}\frac{1}{k!}
\le e\,\delta\norm{z_i}.
\end{equation}
Summing over $i$ gives the second bound.
\end{proof}

\begin{remark}
The theorem does not replace Hoffman's exact quasi-shuffle isomorphism. Instead it quantifies the error made when the within-step exponential correction is ignored. Hoffman's map removes this error algebraically by adding bracket letters; the estimate above controls the unbracketed approximation.
\end{remark}

\section{The geometricity defect: a second-order principle}
\label{sec:defect}

The shuffle identity, \cref{thm:shuffle}, is the algebraic signature of geometricity. It is an exact second-order identity: any departure from it, whether from a non-geometric stochastic lift or from averaging over a random path, is measured by a bracket or a covariance. This subsumes the \Ito{} shuffle-defect in \cref{prop:ito-defect} and governs the passage to expected signatures.

\begin{theorem}[geometricity defect]
\label{thm:geo-defect}
The following two identities describe the two canonical ways shuffle multiplicativity can fail.
\begin{enumerate}[label=\textup{(\roman*)},leftmargin=2.25em]
\item \textup{(Convention defect.)} Let $X$ be a semimartingale and let $\mathbf X^\circ$ denote its forward \Ito{} lift. For all letters $i,j$ for which the displayed quantities are integrable,
\begin{equation}
\inner{\Sig(\mathbf X^\circ)}{i}\inner{\Sig(\mathbf X^\circ)}{j}
-\inner{\Sig(\mathbf X^\circ)}{i\shuffle j}=[X^i,X^j]_T,
\label{eq:convention-defect}
\end{equation}
the quadratic covariation. Thus the \Ito{} lift agrees with a geometric lift at level two exactly when this bracket term vanishes on the coordinate pair.
\item \textup{(Averaging defect.)} Let $\mathbf X$ be a random weakly geometric lift with square-integrable signature coordinates. For all words $u,v$,
\begin{equation}
\inner{\E\Sig(\mathbf X)}{u}\inner{\E\Sig(\mathbf X)}{v}
-\inner{\E\Sig(\mathbf X)}{u\shuffle v}
=-\Cov\left(\inner{\Sig(\mathbf X)}{u},\inner{\Sig(\mathbf X)}{v}\right).
\label{eq:averaging-defect}
\end{equation}
\end{enumerate}
Both defects are second-order: a bracket for a non-geometric convention, and a covariance for a non-degenerate law. The simultaneous zero-defect regime is the geometric deterministic case.
\end{theorem}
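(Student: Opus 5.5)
Part (i) is the semimartingale analogue of \cref{prop:ito-defect}, and the plan is to prove it by \Ito{}'s product rule. Translate so that $X_0=0$. The level-one coordinates of the forward lift are then $\inner{\Sig(\mathbf X^\circ)}{i}=X^i_T$. The level-two coordinates are the \Ito{} integrals
\[
\inner{\Sig(\mathbf X^\circ)}{ij}=\int_{(0,T]}X^i_{u-}\dd X^j_u .
\]
Since $i\shuffle j=ij+ji$, the defect equals
\[
X^i_TX^j_T-\int_{(0,T]}X^i_{u-}\dd X^j_u-\int_{(0,T]}X^j_{u-}\dd X^i_u .
\]
The integration-by-parts formula for semimartingales, $X^iX^j=\int X^i_-\dd X^j+\int X^j_-\dd X^i+[X^i,X^j]$, identifies this exactly as $[X^i,X^j]_T$. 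Here the bracket includes both the continuous part $\langle X^{i,c},X^{j,c}\rangle$ and the jump sum, so the result recovers \cref{prop:ito-defect} when $X$ has finite variation.

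For the comparison with a geometric lift, use the Stratonovich/Marcus lift. Its level-two term is the \Ito{} term plus $\tfrac12[X,X]$, split symmetrically, exactly as in \cref{cor:marcus-restores}. That lift satisfies the shuffle relation at level two by construction. Hence the two lifts agree on the pair $(i,j)$ if and only if the symmetric bracket term vanishes.

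Part (ii) rests on the pathwise shuffle identity. For a weakly geometric lift, \cref{thm:shuffle} (extended to weakly geometric rough paths, where it holds by definition or by approximation) gives almost surely
\[
\inner{\Sig(\mathbf X)}{u}\inner{\Sig(\mathbf X)}{v}=\inner{\Sig(\mathbf X)}{u\shuffle v}.
\]
Square integrability of the coordinates makes the left side integrable by Cauchy--Schwarz. Taking expectations and using linearity of $w\mapsto\inner{\cdot}{w}$ therefore gives $\E[\inner{\Sig}{u}\inner{\Sig}{v}]=\inner{\E\Sig}{u\shuffle v}$. Subtracting this from $\inner{\E\Sig}{u}\inner{\E\Sig}{v}$ gives minus the covariance, which is \eqref{eq:averaging-defect}. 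The only point to check is integrability of every word appearing in $u\shuffle v$. Each such word has length $|u|+|v|$, so I will read the hypothesis as requiring integrability up to that level; the identity also supplies it, since the product on the left is integrable.

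For the closing sentence, the zero-defect regime is where both defects vanish. Part (i) vanishes when the brackets vanish, for example when the lift is geometric or the path is finite-variation continuous. Part (ii) vanishes for all $u,v$ exactly when every signature coordinate has zero variance, i.e.\ $\Sig(\mathbf X)$ is a.s.\ constant. By \cref{thm:hambly}, $\Sig(\mathbf X)$ being a.s.\ constant means the reduced path is deterministic, which justifies the final claim and the statement in the introduction that expected signatures are group-like only for deterministic reduced paths. To see that group-likeness of $\E\Sig$ forces zero covariance, take $u=v$: the shuffle relation for $\E\Sig$ gives $\Var\inner{\Sig}{u}=0$ for every $u$.

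I expect the main obstacle to be part (i) for general semimartingales. The statement must make precise what "forward lift" means, how the bracket is defined (including jumps), and under which integrability conditions the identities hold pathwise. I would first handle this by working pathwise with the \Ito{} integral defined up to null sets. The remaining algebra is routine.
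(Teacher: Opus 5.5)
Your proposal is correct and follows the paper's proof: the semimartingale product rule (after translating to $X_0=0$) for (i), the almost-sure shuffle identity followed by taking expectations for (ii), and, for the closing sentence, the same argument the paper gives in \cref{cor:grouplike-det}. One small slip in (ii): the pathwise identity only makes the combination $\inner{\Sig}{u\shuffle v}$ integrable, not each word in it (for instance $\inner{\Sig}{i\shuffle j}=\inner{\Sig}{ij}+\inner{\Sig}{ji}$ says nothing about integrability of the L\'evy area), so your remark that ``the identity also supplies it'' is false, and you do need the hypothesis that all signature coordinates through level $|u|+|v|$ are integrable, which is how the paper reads it.
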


\begin{proof}
For (i), the forward product rule gives
\begin{equation}
X^i_TX^j_T-X^i_0X^j_0=\int_0^T X^i_{u^-}\dd X^j_u+
\int_0^T X^j_{u^-}\dd X^i_u+[X^i,X^j]_T.
\end{equation}
The left side is $\inner{\Sig(\mathbf X^\circ)}{i}\inner{\Sig(\mathbf X^\circ)}{j}$ after translating the initial point to zero, and the two integrals sum to $\inner{\Sig(\mathbf X^\circ)}{ij+ji}=\inner{\Sig(\mathbf X^\circ)}{i\shuffle j}$. For a finite-variation jump path this is exactly \cref{prop:ito-defect}; for a continuous semimartingale it is the usual \Ito{}--Stratonovich correction.

For (ii), each realization is group-like, so
\begin{equation}
\inner{\Sig}{u}\inner{\Sig}{v}=\inner{\Sig}{u\shuffle v}\qquad\text{almost surely}.
\end{equation}
Taking expectations and subtracting $\inner{\E\Sig}{u}\inner{\E\Sig}{v}=\E\inner{\Sig}{u}\,\E\inner{\Sig}{v}$ gives \eqref{eq:averaging-defect}.
\end{proof}

\begin{corollary}[group-likeness detects determinism]
\label{cor:grouplike-det}
$\E\Sig(\mathbf X)$ is group-like if and only if every signature coordinate is almost surely constant. Equivalently, $X$ is almost surely a single tree-like equivalence class. Hence the expected signature is itself a signature only in the degenerate deterministic case; in general it lies in the tensor algebra but outside the group of group-like elements.
\end{corollary}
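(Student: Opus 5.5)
The approach is to read off the result from the averaging-defect identity \eqref{eq:averaging-defect} of \cref{thm:geo-defect}, applied to diagonal pairs $u=v$. I will assume square-integrable signature coordinates, and I will interpret ``group-like'' as the shuffle-character property of the coordinate functional $w\mapsto\inner{\E\Sig(\mathbf X)}{w}$. This is equivalent to group-likeness by the argument in \cref{thm:logsig}.

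For the direction ``constant $\Rightarrow$ group-like'', suppose every coordinate $\inner{\Sig(\mathbf X)}{w}$ is almost surely equal to a constant $c_w$. Then $\E\Sig(\mathbf X)$ equals $\Sig(\mathbf X(\omega))$ for almost every $\omega$; to make this rigorous I intersect the countably many full-measure events over all words. A single realization is group-like by \cref{thm:logsig}, or by weak geometricity for rough lifts. Alternatively, every covariance in \eqref{eq:averaging-defect} vanishes, so the shuffle character identity holds exactly.

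For the converse, suppose $\E\Sig(\mathbf X)$ is group-like, so the left side of \eqref{eq:averaging-defect} vanishes for all $u,v$. Taking $u=v=w$ gives $\Var\inner{\Sig(\mathbf X)}{w}=0$, and hence $\inner{\Sig(\mathbf X)}{w}$ is almost surely constant. Doing this for each of the countably many words and intersecting the null sets shows that $\Sig(\mathbf X)$ is almost surely equal to a fixed group-like tensor $g$.

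For the ``equivalently'' clause, I invoke \cref{thm:hambly}, or its rough-path extension by Boedihardjo et al.\ in the weakly geometric case. Two realizations with the same signature are tree-like equivalent, so almost surely $X$ lies in the single equivalence class whose signature is $g$. Conversely, a single class has a single signature, which brings us back to the first direction.

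The final sentence of the statement is a restatement of the corollary: a non-degenerate law has some coordinate with positive variance, and that forces a nonzero covariance defect.

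The main obstacle is integrability and measurability. Variance only makes sense under the square-integrability hypothesis of \cref{thm:geo-defect}(ii), so I will state the corollary under that hypothesis; without it, $\E\Sig$ may not even exist. Beyond that, I only need to be careful that ``almost surely constant'' is taken coordinatewise, with the countable intersection handled explicitly, before passing to the tree-like class.
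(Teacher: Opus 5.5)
Your proposal is correct and follows the paper's proof: set $u=v=w$ in the averaging defect of \cref{thm:geo-defect}(ii) to get zero variance, intersect over the countable word basis, and invoke \cref{thm:hambly} (or its rough-path extension) for the tree-like class, with the converse immediate. Your integrability caveat is in fact automatic. Once every coordinate of $\E\Sig(\mathbf X)$ exists, the pathwise shuffle identity $\inner{\Sig(\mathbf X)}{w}^2=\inner{\Sig(\mathbf X)}{w\shuffle w}$ already makes each coordinate square-integrable.
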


\begin{proof}
Group-likeness is the vanishing of every defect in \eqref{eq:averaging-defect}. Taking $u=v=w$ gives $\Var(\inner{\Sig}{w})=0$ for every word $w$, so every coordinate is almost surely constant. The word basis is countable and separating, hence $\Sig(\mathbf X)$ is almost surely constant. By \cref{thm:hambly}, the constant signature identifies one reduced path, or one tree-like equivalence class. The converse is immediate.
\end{proof}

\section{Expected signatures, kernels, and the defect mass}
\label{sec:kernel}

For paths truncated at level $N$, write
\begin{equation}
k_N(x,y)=\inner{\Sig^{(N)}(x)}{\Sig^{(N)}(y)}
=\sum_{|w|\le N}\inner{\Sig(x)}{w}\inner{\Sig(y)}{w}.
\end{equation}

\begin{proposition}[kernel decomposition and MMD]
\label{prop:kernel-mmd}
Let $X,X'$ be independent with law $\mu$ and let $Y,Y'$ be independent with law $\nu$, all with level-$N$ square-integrable signatures.
\begin{enumerate}[label=\textup{(\roman*)},leftmargin=2.25em]
\item
\begin{equation}
\E_\mu[k_N(X,X')]=\norm{\E_\mu\Sig^{(N)}}^2,
\end{equation}
and the diagonal excess
\begin{equation}
\E_\mu[k_N(X,X)]-\E_\mu[k_N(X,X')]=
\sum_{|w|\le N}\Var_\mu\inner{\Sig(X)}{w}
\label{eq:defect-mass}
\end{equation}
is exactly the total coordinate defect mass from \cref{thm:geo-defect}\textup{(ii)}.
\item
\begin{equation}
\operatorname{MMD}_N(\mu,\nu)^2=\norm{\E_\mu\Sig^{(N)}-\E_\nu\Sig^{(N)}}^2.
\end{equation}
Thus the level-$N$ signature MMD is the Euclidean distance between truncated expected signatures and separates laws up to their level-$N$ expected-signature coordinates.
\end{enumerate}
\end{proposition}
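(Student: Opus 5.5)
The argument is a direct computation in finitely many coordinates, using bilinearity of $k_N$, independence, and the definition of MMD in the reproducing kernel Hilbert space of $k_N$. The key fact is that $k_N$ is the Euclidean inner product on the finite-dimensional space $T^{(N)}(\R^d)$, with the word basis orthonormal. Hence the feature map is $x\mapsto\Sig^{(N)}(x)$, and every kernel expectation reduces to an expectation of a finite sum of products of signature coordinates. Square-integrability of the level-$N$ coordinates makes every such product integrable, by Cauchy--Schwarz. This justifies exchanging expectation with the finite sum over $|w|\le N$.

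\emph{Step 1 (off-diagonal term).} Write
\begin{equation}
\E_\mu[k_N(X,X')]=\sum_{|w|\le N}\E\big[\inner{\Sig(X)}{w}\inner{\Sig(X')}{w}\big].
\end{equation}
By independence of $X$ and $X'$ and equality of their laws, each summand factors as $\big(\E_\mu\inner{\Sig(X)}{w}\big)^2$. Summing over words gives $\norm{\E_\mu\Sig^{(N)}}^2$.

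\emph{Step 2 (diagonal term).} We have
\begin{equation}
\E_\mu[k_N(X,X)]=\sum_{|w|\le N}\E_\mu\inner{\Sig(X)}{w}^2.
\end{equation}
Subtracting Step 1 word by word gives $\sum_{|w|\le N}\Var_\mu\inner{\Sig(X)}{w}$, which is \eqref{eq:defect-mass}. To identify this with the defect mass, take $u=v=w$ in \eqref{eq:averaging-defect} of \cref{thm:geo-defect}(ii). This shows that $\Var\inner{\Sig}{w}$ is exactly minus the shuffle defect $\inner{\E\Sig}{w}^2-\inner{\E\Sig}{w\shuffle w}$. Summing over $|w|\le N$ gives the total defect mass. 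One caveat: $w\shuffle w$ has length $2|w|$, which may exceed $N$. The identification therefore uses coordinates up to level $2N$, or else the defect mass is simply read as defined by the variances. I would state it in the latter form and note the former only parenthetically.

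\emph{Step 3 (MMD).} Use the standard expansion
\begin{equation}
\operatorname{MMD}_N(\mu,\nu)^2=\E k_N(X,X')-2\E k_N(X,Y)+\E k_N(Y,Y'),
\end{equation}
with $X$ and $Y$ independent. Applying Step 1 to each term, including the cross term, gives
\begin{equation}
\norm{\E_\mu\Sig^{(N)}}^2-2\inner{\E_\mu\Sig^{(N)}}{\E_\nu\Sig^{(N)}}+\norm{\E_\nu\Sig^{(N)}}^2=\norm{\E_\mu\Sig^{(N)}-\E_\nu\Sig^{(N)}}^2.
\end{equation}
Equivalently, the mean embeddings are $\E\Sig^{(N)}$ in the Euclidean feature space. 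The separation claim follows immediately: the MMD vanishes if and only if all expected coordinates with $|w|\le N$ agree.

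None of these steps is a real obstacle. The only point requiring care is the level bookkeeping in Step 2 when matching the variance sum to \cref{thm:geo-defect}(ii), since the shuffle $w\shuffle w$ leaves the level-$N$ truncation.
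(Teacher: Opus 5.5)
Your proposal is correct and follows essentially the same route as the paper: expand $k_N$ in the orthonormal word basis, factor the off-diagonal term by independence, subtract to obtain the variance sum, and expand the MMD expression bilinearly. Your caveat is a fair refinement that the paper's proof passes over silently: matching the variance sum to the defect of \cref{thm:geo-defect}\textup{(ii)} involves the words $w\shuffle w$ of length $2|w|$, and it also requires an almost surely geometric lift.
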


\begin{proof}
Independence gives
\begin{equation}
\E[k_N(X,X')]=\sum_{|w|\le N}\E\inner{\Sig(X)}{w}\,\E\inner{\Sig(X')}{w}
=\sum_{|w|\le N}\inner{\E\Sig^{(N)}}{w}^2.
\end{equation}
Similarly,
\begin{equation}
\E[k_N(X,X)]=\sum_{|w|\le N}\E\inner{\Sig(X)}{w}^2.
\end{equation}
Subtracting yields the variance sum in \eqref{eq:defect-mass}. Expanding the usual squared MMD expression
\begin{equation}
\E_\mu k_N(X,X')-2\E_{\mu\nu}k_N(X,Y)+\E_\nu k_N(Y,Y')
\end{equation}
bilinearly gives the squared norm of the difference of the two expected-signature vectors.
\end{proof}

\section{The free nilpotent group and why averaging leaves it}
\label{sec:carnot}

Truncated signatures of weakly geometric paths take values in the step-$N$ free nilpotent Lie group
\begin{equation}
G^{(N)}(\R^d)=\exp\big(\lie^{(N)}(\R^d)\big)\subset T^{(N)}(\R^d),
\qquad
\dim \lie^{(N)}=\sum_{k=1}^{N}\ell_d(k),
\end{equation}
with $\ell_d$ the Witt function of \cref{cor:witt}. The map $x\mapsto\Sig^{(N)}(x)$ is the Cartan development of $x$ into $G^{(N)}$, and the $p$-variation metric is comparable to the homogeneous Carnot--Caratheodory norm on $G^{(N)}$ \citep{friz2010multidimensional}.

\begin{figure}[t]
\centering
\includegraphics[width=0.95\linewidth]{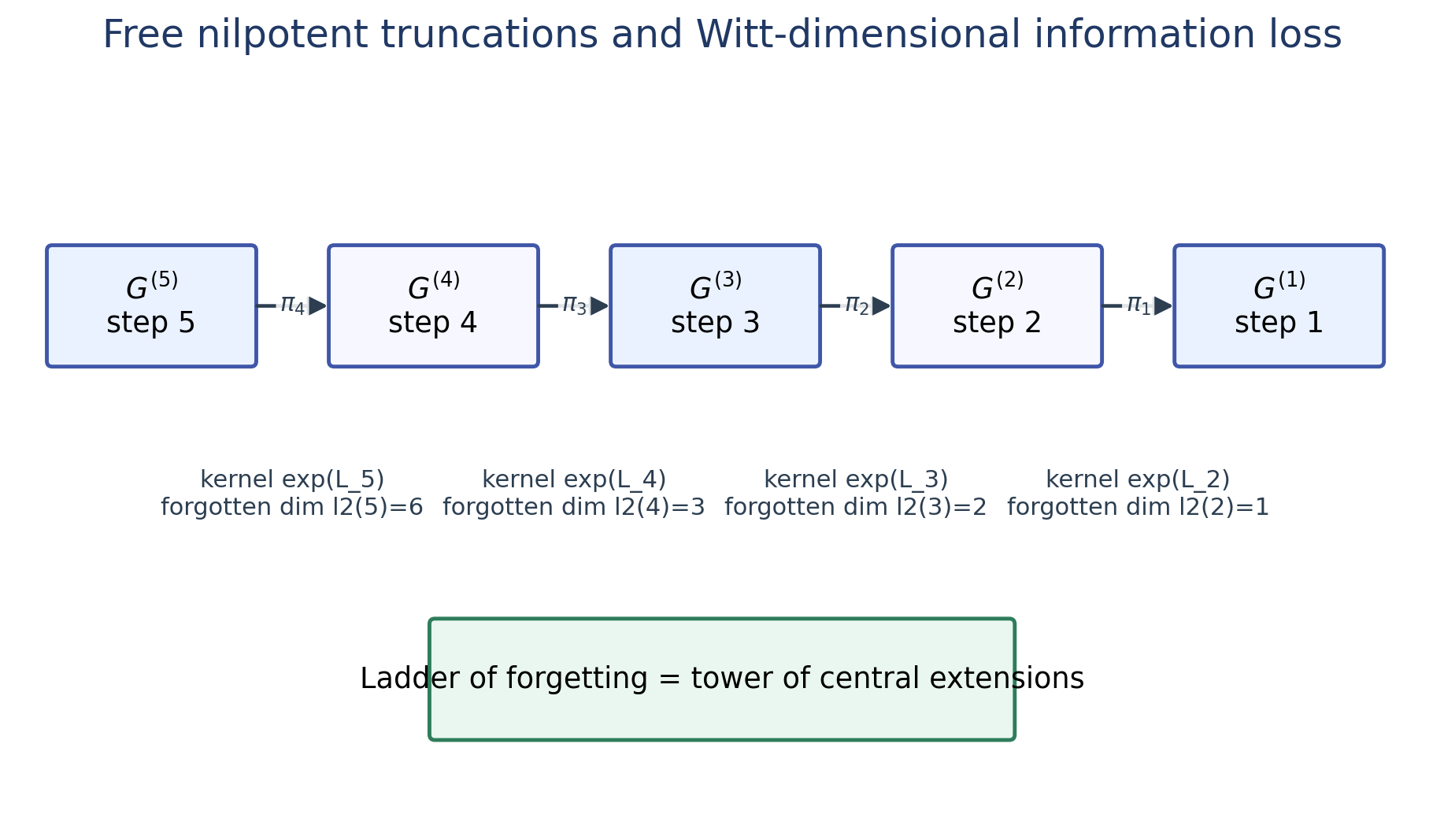}
\caption{The central tower of free nilpotent truncations. Each projection forgets exactly the top-degree free-Lie layer; for $d=2$ the forgotten dimensions through level five are the Witt numbers $2,1,2,3,6$.}
\label{fig:central-tower}
\end{figure}

\begin{proposition}[the ladder of forgetting is a central tower]
\label{prop:central-tower}
The truncations $\pi_N:G^{(N+1)}\to G^{(N)}$ are surjective group homomorphisms with central kernel
\begin{equation}
\ker\pi_N=\exp(\lie_{N+1})\cong\R^{\ell_d(N+1)}.
\end{equation}
Hence the representation ladder
\begin{equation}
\cdots\twoheadrightarrow\Sig^{(N+1)}\twoheadrightarrow\Sig^{(N)}\twoheadrightarrow\cdots
\end{equation}
is a tower of central extensions, each step forgetting exactly the top-degree free-Lie directions, of dimension the Witt number $\ell_d(N+1)$.
\end{proposition}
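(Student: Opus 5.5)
The plan is to work entirely at the Lie-algebra level and transfer to the groups through the exponential, which on these nilpotent groups is a global diffeomorphism. Write $\lie^{(N)}=\bigoplus_{k=1}^N\lie_k$ for the step-$N$ free nilpotent Lie algebra. Its bracket is the tensor commutator truncated at level $N$, i.e.\ the graded bracket with every component of degree $>N$ discarded.

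The truncation $\rho_N:T^{(N+1)}\to T^{(N)}$, which deletes the level-$(N+1)$ component, is an algebra homomorphism of truncated tensor algebras. This holds because the product of two elements in degrees summing to at most $N$ never involves level $N+1$. Hence $\rho_N$ commutes with the truncated exponential and logarithm (both are polynomials, by nilpotency), and $\pi_N=\rho_N|_{G^{(N+1)}}$ satisfies $\pi_N(\exp a)=\exp(\rho_N a)$. Its restriction to $\lie^{(N+1)}$ is the projection onto $\lie^{(N)}$ along $\lie_{N+1}$. This projection is a Lie homomorphism, since the bracket is graded, and it is surjective. It follows that $\pi_N$ is a group homomorphism (restriction of an algebra homomorphism) and maps $G^{(N+1)}$ onto $G^{(N)}$ (every $\exp b$ with $b\in\lie^{(N)}$ is the image of $\exp b$ viewed in $G^{(N+1)}$).

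For the kernel, $\exp a\in\ker\pi_N$ iff $\exp(\rho_N a)=\one$ in $T^{(N)}$, iff $\rho_N a=0$ (exp is injective on $\lie^{(N)}$ with inverse log), iff $a\in\lie_{N+1}$. For $a\in\lie_{N+1}$ one has $a\otimes a=0$ in $T^{(N+1)}$, so $\exp a=\one+a$. Thus $\ker\pi_N=\one+\lie_{N+1}$, and $(\one+a)(\one+a')=\one+a+a'$ gives an isomorphism with the vector group $\lie_{N+1}\cong\R^{\ell_d(N+1)}$, using \cref{cor:witt} for the dimension. Centrality follows because $a\otimes g=g\otimes a=a$ for any $g=\one+(\text{positive degree})\in G^{(N+1)}$, as degree $\ge N+2$ is truncated. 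Hence $(\one+a)g=g+a=g(\one+a)$.

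The ladder statement then follows by composing: $\Sig^{(N)}(x)=\pi_N\Sig^{(N+1)}(x)$, since truncation commutes with computing the signature level by level. Each fibre of the resulting tower is a $\lie_{N+1}$-torsor, of dimension $\ell_d(N+1)$.

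The only step needing care is the compatibility of truncation with exp and log. I would verify it by noting that both are finite polynomial expressions in the truncated algebra and that $\rho_N$ is multiplicative. Everything else is routine, so I expect no real obstacle beyond this bookkeeping.
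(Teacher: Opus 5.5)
Your proof is correct and follows essentially the same route as the paper: truncation $T^{(N+1)}\to T^{(N)}$ is an algebra homomorphism that restricts to a group homomorphism, the kernel is the exponential of the top Lie layer $\lie_{N+1}$, centrality comes from degree truncation, and the dimension is the Witt number $\ell_d(N+1)$. You also fill in details the paper leaves implicit: surjectivity by lifting $\exp b$, the explicit identification $\ker\pi_N=\one+\lie_{N+1}$ as a vector group, and centrality checked directly at the group level ($(\one+a)g=g+a=g(\one+a)$) rather than through vanishing brackets.
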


\begin{proof}
The map $\pi_N$ is the canonical projection $T^{(N+1)}\to T^{(N)}$, an algebra homomorphism that restricts to a group homomorphism on group-like elements. Its kernel on $G^{(N+1)}$ consists of exponentials of Lie elements of homogeneous degree $N+1$, namely $\exp(\lie_{N+1})$. This layer is central in the step-$(N+1)$ algebra because all brackets with positive-degree elements have degree larger than $N+1$ and vanish after truncation. The dimension is $\dim\lie_{N+1}=\ell_d(N+1)$ by \cref{cor:witt}.
\end{proof}

\begin{remark}[averaging leaves the group]
$G^{(N)}$ is a curved, non-convex submanifold of $T^{(N)}$. By \cref{cor:grouplike-det}, the expected signature $\E\Sig^{(N)}$ lies in the ambient algebra but generically not in $G^{(N)}$: it is the barycenter of a distribution on the group, displaced into the enveloping algebra by the covariance mass of \cref{thm:geo-defect}\textup{(ii)}. The probabilistic, algebraic, and geometric statements are one phenomenon: laws live one level up, in the convex hull of the group, and the expected signature is their first moment there.
\end{remark}

\section{Affine and Hawkes expected signatures}
\label{sec:hawkes}

\subsection{Affine expected signatures}
\label{sec:affine-sig}

Let $X$ be a non-explosive affine process on $\R^d$ whose generator is defined on polynomials and has the schematic form
\begin{align}
\mathcal A f(x)
={}&(b_0+b_1x)\cdot\nabla f(x)
+\frac12\Tr\!\big((a_0+a_1x)\nabla^2f(x)\big)\nonumber\\
&+\int_{\R^d}\big(f(x+z)-f(x)-z\cdot\nabla f(x)\big)(m_0+m_1x)(\dd z),
\label{eq:affine-generator}
\end{align}
where the notation indicates affine dependence on the state. Fix a truncation level $m$ and assume that the jump kernel has finite moments through order $m$, that the associated martingale problem is well posed, and that the polynomial moment hierarchy is closed through degree $m$ on the time interval considered. Let $\mathbf X=(t,X_t)$ be the time-augmented Marcus lift and write $\Sig^{(m)}_{0,t}=\Sig^{(m)}(\mathbf X)_{0,t}$.

\begin{proposition}[linear closure of the affine expected signature]
\label{prop:affine-sig}
Fix $m<\infty$. There is a finite vector $U_t$ collecting the expected-signature coordinates $\E\inner{\Sig^{(m)}_{0,t}}{w}$ together with state-weighted coordinates $\E[X_t^{\gamma}\inner{\Sig^{(m)}_{0,t}}{w}]$ for all $|\gamma|+|w|\le m$, and there is a constant matrix $A_m$ depending on the affine characteristics in \eqref{eq:affine-generator} such that
\begin{equation}
\frac{\dd}{\dd t}U_t=A_mU_t,
\qquad
\E[\Sig^{(m)}_{0,t}]=\Pi_m e^{tA_m}U_0,
\label{eq:affine-sig-matrix}
\end{equation}
where $\Pi_m$ selects the coordinates with no state weight.
\end{proposition}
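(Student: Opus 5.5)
Treat the pair $Z_t=(X_t,\Sig^{(m)}_{0,t})$ as a joint Markov process on $\R^d\times T^{(m)}(\R^{d+1})$ and apply its generator to the finite family of polynomial test functions
\begin{equation}
f_{\gamma,w}(x,s)=x^\gamma\inner{s}{w},\qquad |\gamma|+|w|\le m.
\end{equation}
The signature component is a deterministic functional of the path of $X$, driven through the Marcus equation $\dd\Sig_{0,t}=\Sig_{0,t^-}\otimes\dd\mathbf X_t$ (Marcus sense at jumps). Hence $Z$ is Markov, and its generator $\mathcal L$ has three parts. The first is the affine part $\mathcal A$ from \eqref{eq:affine-generator} acting in $x$. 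The second is a transport part $\inner{s\otimes e_0}{\cdot}$ from the time channel and $b$-drift terms in $\inner{s\otimes e_i}{\cdot}$. The third collects the diffusion cross terms and the jump part. For the jump part, a jump $z$ moves $(x,s)$ to $(x+z,\,s\otimes\expt(z))$ by the chord-fill description of the Marcus lift (the segment computation in the proof of \cref{thm:hopf-square}). Dynkin's formula then gives
\begin{equation}
\frac{\dd}{\dd t}\E f_{\gamma,w}(Z_t)=\E[\mathcal L f_{\gamma,w}(Z_t)].
\end{equation}
The well-posed martingale problem and the finite jump moments through order $m$ make the local martingale a true martingale.

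The core step is the grading count. I would assign weight $|\gamma|+|w|$ to $x^\gamma\inner{s}{w}$, with the time letter $0$ counted as one letter, and show that $\mathcal L f_{\gamma,w}$ is a linear combination of $f_{\gamma',w'}$ with $|\gamma'|+|w'|\le|\gamma|+|w|$.
\begin{itemize}
\item \emph{Drift.} The term $(b_0+b_1x)\cdot\nabla_x$ lowers $|\gamma|$ by one and raises it by at most one, so the weight does not increase.
\item \emph{Chen-type terms.} Right-multiplication by $e_i$ removes the last letter of $w$ and inserts the factor $\dd X^i$. Its drift $(b_0+b_1x)^i$ keeps the weight, and the time letter $e_0$ contributes exactly weight one.
\item \emph{Second-order terms.} The Marcus/Stratonovich form of $\Tr((a_0+a_1x)\nabla^2)$, including the cross derivatives $\partial_x\partial_s$ and $\partial_s^2$, removes two units (two letters, or one letter and one $x$-degree) and multiplies by at most one power of $x$.
\item \emph{Jump terms.} The compensated jump integral expands $(x+z)^\gamma\inner{s\otimes\expt(z)}{w}-x^\gamma\inner{s}{w}-\ldots$ into terms $z^{\beta}x^{\gamma-\beta'}\inner{s}{w'}$ in which the $z$-degree equals the number of removed units. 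Integrating against $(m_0+m_1x)(\dd z)$ with finite moments leaves coefficients times at most one extra power of $x$ while at least one unit has been removed.
\end{itemize}
Thus the span of $\{f_{\gamma,w}\}$ is $\mathcal L$-invariant, and defining $A_m$ as the matrix of $\mathcal L$ on this span gives $\dot U_t=A_mU_t$. Solving the ODE gives $U_t=e^{tA_m}U_0$, where $U_0$ has entries $x_0^\gamma\delta_{w,\emptyword}$. Projecting with $\Pi_m$ onto the $\gamma=0$ coordinates yields \eqref{eq:affine-sig-matrix}.

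The main obstacle is the jump part. One has to check that the Marcus jump map $s\mapsto s\otimes\expt(z)$, combined with the affine intensity factor $m_1x$, never produces a term of total weight above $m$. The danger is a term with zero removed units multiplied by $x$, and this is exactly what the compensator excludes. I would verify this first on the first-order Taylor term and then argue by degree in $z$. A secondary issue is rigor in the moment bounds: the $\E|X_t|^m$ bounds follow from the assumed closure of the polynomial moment hierarchy, and the bounds on the state-weighted signature moments should follow by a Gronwall argument on the same triangular system, which justifies the differentiation under $\E$.
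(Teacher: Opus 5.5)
Your proposal is correct and follows the same route as the paper. You apply Dynkin's formula to the signature-augmented Markov state $(X_t,\Sig^{(m)}_{0,t})$ with test functions $x^\gamma\inner{s}{w}$, and close the system by the total-degree grading $|\gamma|+|w|$, counting the time letter as one unit. Your term-by-term check (affine drift, last-letter Chen terms, Stratonovich and cross-variation terms, and the compensated Marcus jump $s\mapsto s\otimes\expt(z)$) expands the paper's one-line claim that affine characteristics preserve total degree while appending a signature letter lowers word degree.
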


\begin{proof}
The Marcus signature satisfies the last-letter recursion
\begin{equation}
\dd\inner{\Sig_{0,t}}{vi}=\inner{\Sig_{0,t}}{v}\circ\dd \mathbf X_t^i,
\qquad i\in\{0,1,\ldots,d\}.
\end{equation}
Multiplying by a monomial $X_t^\gamma$ and applying Dynkin's formula to the signature-augmented Markov state gives a linear combination of terms of the same form. Affineness of the drift, covariance, and jump compensator implies that the polynomial moment hierarchy closes by total degree; appending a signature letter lowers the remaining word degree. Therefore no coordinate with $|\gamma|+|w|>m$ is needed. The resulting finite closed linear system has the matrix-exponential solution \eqref{eq:affine-sig-matrix}. This is the polynomial-process closure specialized to the signature-augmented state \citep{cuchiero2023}.
\end{proof}

\subsection{Scalar exponential Hawkes closure}

Consider the scalar exponential-kernel Hawkes process \citep{hawkes1971,hawkesOakes1974}
\begin{equation}
\lambda_t=\mu+\int_{(0,t)}\alpha e^{-\beta(t-s)}\dd N_s,
\qquad 0<\alpha<\beta,
\label{eq:hawkes-intensity}
\end{equation}
with initial intensity $\lambda_0=\ell>0$. Equivalently,
\begin{equation}
\dd\lambda_t=\beta(\mu-\lambda_t)\dd t+\alpha\dd N_t.
\label{eq:lambda-sde}
\end{equation}
Let
\begin{equation}
X_t=(t,N_t)\in\R^2,
\end{equation}
with alphabet $\{0,1\}$, where $0$ denotes time and $1$ denotes the count coordinate. The Marcus signature of $X$ is the ordinary signature of the completed graph obtained by adding a vertical unit chord at each jump.

\begin{figure}[t]
\centering
\includegraphics[width=0.95\linewidth]{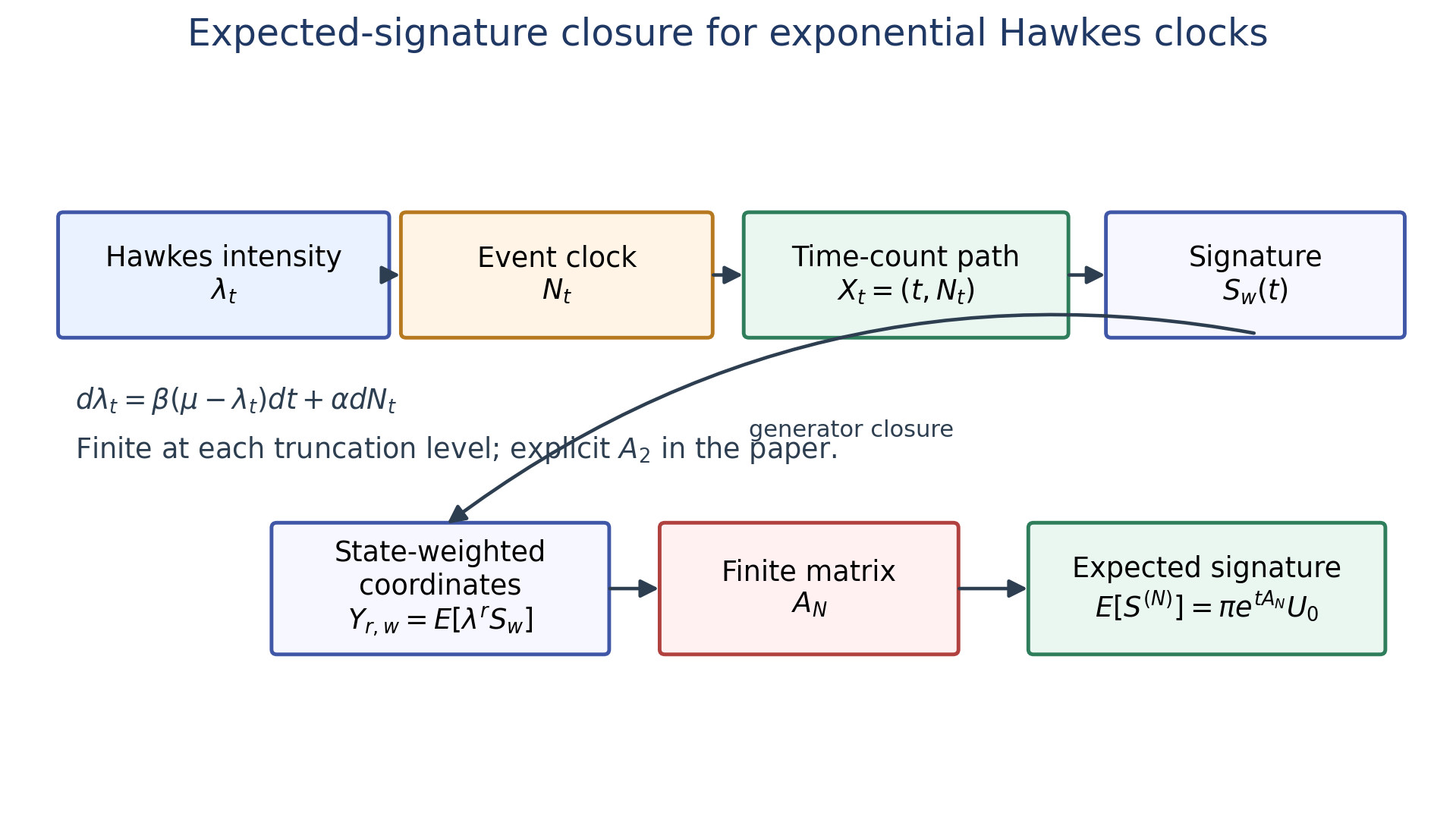}
\caption{Finite expected-signature closure for the exponential Hawkes clock. State-weighted signature coordinates evolve under a finite matrix $A_N$, giving $\E[\Sig^{(N)}]=\pi e^{tA_N}U_0$ at fixed truncation.}
\label{fig:hawkes-closure}
\end{figure}

For a word $w\in\{0,1\}^{\ast}$, let $S_w(t)=\inner{\Sig^M(X)_{0,t}}{w}$. Set $S_{\emptyword}(t)=1$. If $w$ ends in $0$, write $w=v0$ and set $\partial_0 w=v$; otherwise $\partial_0 w=\bot$. Let $\rho(w)$ be the number of trailing $1$'s in $w$. For $0\le k\le \rho(w)$, let $w^{[-k]}$ be the word obtained by deleting the last $k$ letters from $w$, with $w^{[0]}=w$.

\begin{lemma}[signature dynamics for a time-count Marcus path]
\label{lem:signature-dynamics}
Between jumps,
\begin{equation}
\frac{\dd}{\dd t}S_w(t)=
\begin{cases}
S_v(t),& w=v0,\\
0,& \text{otherwise.}
\end{cases}
\end{equation}
At a jump of $N$, the signature is right-multiplied by $\exp_{\otimes}(e_1)$, so
\begin{equation}
S_w(t)=\sum_{k=0}^{\rho(w)}\frac{1}{k!}S_{w^{[-k]}}(t-).
\label{eq:jump-Sw}
\end{equation}
\end{lemma}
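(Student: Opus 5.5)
The plan is to split the proof into the continuous (flow) part and the jump part, and to use Chen's identity (\cref{thm:chen}) in both.

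\emph{Between jumps.} On an interval with no jump, $N$ is constant and $X_t=(t,N_t)$ moves only in the time coordinate. So $\dd X^0_t=\dd t$ and $\dd X^1_t=0$. The signature of a bounded-variation path satisfies the last-letter recursion $\dd S_{vi}(t)=S_v(t)\,\dd X^i_t$. This follows from Chen's identity applied on $[0,t+h]=[0,t]\cup[t,t+h]$: the increment over $[t,t+h]$ is $\one+(X_{t+h}-X_t)+O(h^2)$. If $w=v0$ we get $\tfrac{\dd}{\dd t}S_w=S_v$. If $w=v1$ the derivative vanishes. The empty word is constant.

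\emph{At a jump.} Let $\tau$ be a jump time. The Marcus completed graph inserts, on an auxiliary interval, the straight chord from $(\tau,N_{\tau-})$ to $(\tau,N_{\tau-}+1)$, with increment $e_1$. By part (a) of the proof of \cref{thm:hopf-square}, the signature of this chord is $\expt(e_1)=\sum_{k\ge0}e_1^{\otimes k}/k!$. Chen's identity then gives $\Sig^M_{0,\tau}=\Sig^M_{0,\tau-}\otimes\expt(e_1)$. Simultaneous jumps do not arise, since $N$ is a simple counting process, so each jump contributes exactly one unit chord.

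\emph{Coordinate extraction.} Pair with a word $w$. By the coordinate formula for tensor multiplication from the proof of \cref{thm:chen},
\[
\inner{a\otimes b}{w}=\sum_{w=uv}\inner{a}{u}\inner{b}{v},
\]
where the sum runs over all splittings of $w$ into a prefix $u$ and a suffix $v$. Here $b=\expt(e_1)$, and $\inner{\expt(e_1)}{v}$ is nonzero only when $v=1^k$, in which case it equals $1/k!$. Hence only splittings whose suffix is a block of $k$ trailing $1$'s contribute. Such a suffix exists exactly when $0\le k\le\rho(w)$, and the corresponding prefix is $w^{[-k]}$. This gives \eqref{eq:jump-Sw}.

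\emph{Main obstacle.} The only delicate point is justifying that the Marcus signature across a jump equals the left-limit signature right-multiplied by the chord signature. This uses the definition of the Marcus lift as the ordinary signature of the completed graph together with reparametrization invariance. The left-limit bookkeeping with $t-$ must also be handled correctly: the value at $t$ is the post-jump value, consistent with càdlàg conventions.
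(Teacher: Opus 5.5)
Your proposal is correct and takes essentially the same route as the paper: Chen's identity gives right-multiplication by $\expt(e_0\,\dd t)=\one+e_0\,\dd t+O(\dd t^2)$ between jumps and by $\expt(e_1)$ at each jump, and the coordinate of $w$ is read off from the trailing block $1^k$ with $k\le\rho(w)$. Your remarks on the simplicity of $N$ and the explicit prefix--suffix (deconcatenation) formula only spell out steps the paper leaves implicit.
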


\begin{proof}
Between jumps the path moves only in the time direction by increment $e_0\dd t$. Right multiplication by $\exp_{\otimes}(e_0\dd t)=1+e_0\dd t+O(\dd t^2)$ gives a first-order contribution only to words ending in $0$, namely from their prefix $v$.

At a jump, the Marcus completed graph traverses a vertical segment of increment $e_1$. By Chen's identity, the new signature is the old signature tensor-multiplied by $\exp_{\otimes}(e_1)$. The coordinate of $w$ in this product is obtained by taking $k$ terminal letters from the jump exponential. This is possible exactly when the last $k$ letters of $w$ are all $1$, and the exponential coefficient is $1/k!$. Summing over $k=0,\ldots,\rho(w)$ gives \eqref{eq:jump-Sw}.
\end{proof}

\begin{theorem}[finite-dimensional Hawkes expected-signature closure]
\label{thm:hawkes-closure}
Fix a truncation level $m\in\N$. For all pairs $(r,w)$ satisfying
\begin{equation}
0\le r\le m,
\qquad
w\in\{0,1\}^{\ast},
\qquad
r+|w|\le m,
\end{equation}
define
\begin{equation}
Y_{r,w}(t)=\E\big[\lambda_t^r S_w(t)\big].
\end{equation}
Then the vector $Y^{(m)}(t)=(Y_{r,w}(t))_{r+|w|\le m}$ satisfies a finite-dimensional linear ordinary differential equation
\begin{equation}
\frac{\dd}{\dd t}Y^{(m)}(t)=A_mY^{(m)}(t),
\qquad
Y_{r,\emptyword}(0)=\ell^r,
\qquad
Y_{r,w}(0)=0\quad(w\ne\emptyword),
\label{eq:hawkes-matrix}
\end{equation}
where the matrix entries are determined by the coordinate formula
\begin{align}
\frac{\dd}{\dd t}Y_{r,w}
={}&
\one_{\{w=v0\}}Y_{r,v}
+r\beta\mu\,Y_{r-1,w}-r\beta\,Y_{r,w}
\nonumber\\
&+
\sum_{j=0}^{r-1}\binom{r}{j}\alpha^{r-j}Y_{j+1,w}
+
\sum_{j=0}^{r}\binom{r}{j}\alpha^{r-j}
\sum_{k=1}^{\rho(w)}\frac{1}{k!}Y_{j+1,w^{[-k]}}.
\label{eq:closure-formula}
\end{align}
Terms with negative intensity powers are omitted, so the drift term $r\beta\mu Y_{r-1,w}$ is absent when $r=0$. Consequently,
\begin{equation}
\E[\Sig^{M,(m)}(X)_{0,T}]
=
\big(Y_{0,w}(T)\big)_{|w|\le m}
=
\Pi_m\exp(TA_m)Y^{(m)}(0),
\end{equation}
where $\Pi_m$ selects the coordinates with $r=0$.
\end{theorem}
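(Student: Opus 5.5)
The plan is to apply Dynkin's formula to the Markov process $Z_t=(\lambda_t,\Sig^{M,(m)}(X)_{0,t})$, using the functions $f_{r,w}(Z)=\lambda^r S_w$. By \eqref{eq:lambda-sde} and \cref{lem:signature-dynamics}, $Z$ is a piecewise-deterministic Markov process. Between jumps it has the drift $\dot\lambda=\beta(\mu-\lambda)$, $\dot S_w=\one_{\{w=v0\}}S_v$. It jumps at rate $\lambda_{t-}$, and at each jump $\lambda\mapsto\lambda+\alpha$ while $S_w\mapsto\sum_{k=0}^{\rho(w)}S_{w^{[-k]}}/k!$. The truncated signature is a finite vector, and the update rules only involve words of length at most $m$, so the state space is finite-dimensional. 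Its extended generator on $f_{r,w}$ is
\begin{equation}
\mathcal G f_{r,w}
=r\lambda^{r-1}\beta(\mu-\lambda)S_w+\lambda^r\one_{\{w=v0\}}S_v
+\lambda\Big[(\lambda+\alpha)^r\sum_{k=0}^{\rho(w)}\tfrac{1}{k!}S_{w^{[-k]}}-\lambda^rS_w\Big].
\end{equation}

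The algebraic step is to expand $(\lambda+\alpha)^r=\sum_{j=0}^r\binom rj\alpha^{r-j}\lambda^j$ and separate the $k=0$ term from the terms with $k\ge1$. In the $k=0$ term, the $j=r$ summand cancels $-\lambda^rS_w$. This leaves $\sum_{j<r}\binom rj\alpha^{r-j}\lambda^{j+1}S_w$. The terms with $k\ge1$ give $\sum_{j=0}^r\binom rj\alpha^{r-j}\lambda^{j+1}S_{w^{[-k]}}/k!$. The drift part gives $r\beta\mu\lambda^{r-1}S_w-r\beta\lambda^rS_w$ and the prefix term. Taking expectations term by term reproduces \eqref{eq:closure-formula} exactly.

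Next I check closure, i.e.\ that every index on the right-hand side satisfies $r'+|w'|\le m$.
\begin{itemize}
\item $(r,v)$ has $|v|=|w|-1$.
\item $(r-1,w)$ has lower degree.
\item $(j+1,w)$ with $j\le r-1$ has $j+1\le r$.
\item $(j+1,w^{[-k]})$ with $j\le r$ and $k\ge1$ has degree at most $(r+1)+(|w|-1)$.
\end{itemize}
In each case the total degree $r+|w|$ does not increase, which is why $Y^{(m)}$ is closed. Since $A_m$ is a constant matrix, the solution is $e^{tA_m}Y^{(m)}(0)$. The initial data follow from $\lambda_0=\ell$, $S_\emptyword=1$ and $S_w(0)=0$ for $w\neq\emptyword$. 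Projecting onto the coordinates with $r=0$ gives the formula for $\E[\Sig^{M,(m)}]$.

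The main obstacle is the analytic step: showing that $f_{r,w}(Z_t)-\int_0^t\mathcal Gf_{r,w}(Z_s)\dd s$ is a true martingale and not just a local one, so that expectations can be differentiated. I would handle this by localizing at $\tau_n=\inf\{t:N_t\ge n\}$ and controlling the moments on $[0,T]$. The completed graph up to time $t$ has length $t+N_t$, so $|S_w(t)|\le(t+N_t)^{|w|}/|w|!$. Also $\lambda_t\le\max(\ell,\mu)+\alpha N_t$. Hence every term is bounded by a polynomial in $N_T$. For the exponential Hawkes process with $\alpha<\beta$, $N_T$ has finite moments of all orders; this follows from the same moment hierarchy, or from a comparison with a linear branching process, which even gives local exponential moments. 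With these bounds, dominated convergence removes the localization. The same bounds justify Fubini and the absolute continuity of $t\mapsto Y_{r,w}(t)$, and this yields the ODE \eqref{eq:hawkes-matrix}.
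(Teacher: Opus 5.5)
Your proposal is correct and matches the paper's proof: both apply the piecewise-deterministic generator to $\lambda^rS_w$, expand $(\lambda+\alpha)^r$ binomially, cancel the $j=r$, $k=0$ term, and use the same total-degree count to show the system closes. Your localization at $\tau_n$, with the bounds $|S_w(t)|\le(t+N_t)^{|w|}/|w|!$ and $\lambda_t\le\max(\ell,\mu)+\alpha N_t$ and domination of $N$ by a linear birth process, supplies the true-martingale and integrability step the paper skips when it simply takes expectations. Use the birth-process comparison for the moments of $N_T$ rather than the moment hierarchy itself, since the hierarchy already assumes those moments are finite unless you localize and apply Gronwall.
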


\begin{proof}
The process $(\lambda_t,S_w(t):|w|\le m)$ is a piecewise-deterministic Markov process \citep{davis1984}. Apply the generator to the function
\begin{equation}
f_{r,w}(\lambda,S)=\lambda^rS_w.
\end{equation}
The continuous part consists of the intensity drift $\dot\lambda=\beta(\mu-\lambda)$ and the horizontal signature drift from \cref{lem:signature-dynamics}. Thus it contributes
\begin{equation}
r\lambda^{r-1}\beta(\mu-\lambda)S_w
+
\one_{\{w=v0\}}\lambda^rS_v.
\end{equation}
Taking expectations gives
\begin{equation}
r\beta\mu Y_{r-1,w}-r\beta Y_{r,w}+\one_{\{w=v0\}}Y_{r,v}.
\end{equation}

At a jump, which occurs with stochastic rate $\lambda$, the intensity changes from $\lambda$ to $\lambda+\alpha$ and the signature coordinate changes according to \eqref{eq:jump-Sw}. The jump contribution to the generator is therefore
\begin{equation}
\lambda\left((\lambda+\alpha)^r\sum_{k=0}^{\rho(w)}\frac{1}{k!}S_{w^{[-k]}}-\lambda^rS_w\right).
\end{equation}
Expand $(\lambda+\alpha)^r=\sum_{j=0}^r\binom{r}{j}\alpha^{r-j}\lambda^j$. The term with $k=0$ and $j=r$ is $\lambda^{r+1}S_w$, which cancels the subtracted old-state term. The remaining $k=0$ terms have $j\le r-1$ and yield
\begin{equation}
\sum_{j=0}^{r-1}\binom{r}{j}\alpha^{r-j}Y_{j+1,w}.
\end{equation}
The terms with $k\ge1$ yield
\begin{equation}
\sum_{j=0}^{r}\binom{r}{j}\alpha^{r-j}
\sum_{k=1}^{\rho(w)}\frac{1}{k!}Y_{j+1,w^{[-k]}}.
\end{equation}
This proves \eqref{eq:closure-formula}.

It remains to check closure. In the $k=0$ jump terms, $j+1+|w|\le r+|w|\le m$ because $j\le r-1$. In the $k\ge1$ terms,
\begin{equation}
j+1+|w^{[-k]}|=j+1+|w|-k\le r+|w|\le m.
\end{equation}
The drift and time terms also remain within total degree $m$. Hence all right-hand-side coordinates belong to the finite vector $Y^{(m)}$. This gives the finite linear ODE and the matrix-exponential solution.
\end{proof}

\begin{theorem}[explicit level-two Hawkes matrix]
\label{thm:hawkes-A2}
At truncation level two, set
\begin{equation}
U_t=\big(Y_{0,\emptyword},Y_{0,0},Y_{0,1},Y_{0,00},Y_{0,01},Y_{0,10},Y_{0,11},Y_{1,\emptyword},Y_{1,0},Y_{1,1},Y_{2,\emptyword}\big)^{\top}.
\end{equation}
Then $\dot U_t=A_2U_t$, with
\begin{equation}
\resizebox{\textwidth}{!}{$
A_2=
\begin{pmatrix}
0&0&0&0&0&0&0&0&0&0&0\\
1&0&0&0&0&0&0&0&0&0&0\\
0&0&0&0&0&0&0&1&0&0&0\\
0&1&0&0&0&0&0&0&0&0&0\\
0&0&0&0&0&0&0&0&1&0&0\\
0&0&1&0&0&0&0&0&0&0&0\\
0&0&0&0&0&0&0&\tfrac12&0&1&0\\
\beta\mu&0&0&0&0&0&0&\alpha-\beta&0&0&0\\
0&\beta\mu&0&0&0&0&0&1&\alpha-\beta&0&0\\
0&0&\beta\mu&0&0&0&0&\alpha&0&\alpha-\beta&1\\
0&0&0&0&0&0&0&2\beta\mu+\alpha^2&0&0&2(\alpha-\beta)
\end{pmatrix}.
$}
\label{eq:A2}
\end{equation}
With $U_0=(1,0,0,0,0,0,0,\ell,0,0,\ell^2)^\top$, the level-two expected-signature vector is obtained by selecting the first seven coordinates of $e^{TA_2}U_0$.
\end{theorem}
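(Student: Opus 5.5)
The plan is to obtain the statement by specializing \cref{thm:hawkes-closure} to $m=2$; no new stochastic analysis is needed. First I will enumerate the index set $\{(r,w): r+|w|\le 2\}$. With $r=0$ the word $w$ ranges over $\emptyword,0,1,00,01,10,11$, which gives seven coordinates. With $r=1$ one has $|w|\le1$, namely $\emptyword,0,1$. With $r=2$ only $w=\emptyword$ is allowed. This gives exactly the eleven coordinates of $U_t$, in the stated order, so $A_2$ is the restriction of $A_m$ to this basis. The initial vector follows from \eqref{eq:hawkes-matrix}: $Y_{r,\emptyword}(0)=\ell^r$ and all other entries vanish, so $U_0=(1,0,\dots,0,\ell,0,0,\ell^2)^\top$.

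Next I will read off each row from \eqref{eq:closure-formula}. For this I need $\partial_0 w$ and $\rho(w)$ for each word: $\rho(\emptyword)=\rho(0)=\rho(10)=\rho(00)=0$, $\rho(1)=\rho(01)=1$, and $\rho(11)=2$.

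For $r=0$ only two kinds of term survive. The time term contributes $Y_{0,v}$ when $w=v0$. The jump sum reduces to the single index $j=0$, which contributes $\sum_{k=1}^{\rho(w)}\frac1{k!}Y_{1,w^{[-k]}}$. This gives the following rows:
\begin{itemize}
\item $\dot Y_{0,\emptyword}=0$;
\item $\dot Y_{0,0}=Y_{0,\emptyword}$;
\item $\dot Y_{0,1}=Y_{1,\emptyword}$;
\item $\dot Y_{0,00}=Y_{0,0}$;
\item $\dot Y_{0,01}=Y_{1,0}$;
\item $\dot Y_{0,10}=Y_{0,1}$;
\item $\dot Y_{0,11}=Y_{1,1}+\tfrac12Y_{1,\emptyword}$.
\end{itemize}
These are the first seven rows of \eqref{eq:A2}.

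For $r=1$ several contributions combine. The drift gives $\beta\mu Y_{0,w}-\beta Y_{1,w}$. The $k=0$ jump term gives $\alpha Y_{1,w}$. The time term is present only for $w=0$. The $k\ge1$ terms are present only for $w=1$, and there $j\in\{0,1\}$ contributes $\alpha Y_{1,\emptyword}+Y_{2,\emptyword}$. Collecting terms produces rows 8 to 10, including the diagonal entries $\alpha-\beta$.

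For $r=2$, $w=\emptyword$, the drift gives $2\beta\mu Y_{1,\emptyword}-2\beta Y_{2,\emptyword}$. The $k=0$ jump terms with $j=0,1$ give $\alpha^2Y_{1,\emptyword}+2\alpha Y_{2,\emptyword}$. Together these give the last row. The final statement, selecting the first seven coordinates of $e^{TA_2}U_0$, is then the projection $\Pi_2$ from \cref{thm:hawkes-closure}.

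The only real obstacle is bookkeeping. I must make sure that the $k=0$, $j=r$ jump term is correctly cancelled, as in the proof of \cref{thm:hawkes-closure}. I must also check that no coordinate with $r+|w|>2$ appears on the right-hand side, for example that $Y_{1,00}$ and $Y_{2,0}$ never arise. Both points are guaranteed by the degree count in that proof. I will nonetheless verify them row by row, especially rows 7, 10 and 11, where several jump contributions combine.
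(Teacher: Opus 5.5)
Your proposal is correct and follows the paper's own argument: specialize \cref{thm:hawkes-closure} to the eleven pairs with $r+|w|\le 2$ and read off each row from \eqref{eq:closure-formula}. Your row computations agree entry by entry with \eqref{eq:A2}, including $\dot Y_{0,11}=Y_{1,1}+\tfrac12 Y_{1,\emptyword}$, $\dot Y_{1,1}=\beta\mu Y_{0,1}+\alpha Y_{1,\emptyword}+(\alpha-\beta)Y_{1,1}+Y_{2,\emptyword}$ and $\dot Y_{2,\emptyword}=(2\beta\mu+\alpha^2)Y_{1,\emptyword}+2(\alpha-\beta)Y_{2,\emptyword}$, and they are more complete than the paper's proof, which lists only a few sample rows.
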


\begin{proof}
This is \cref{thm:hawkes-closure} specialized to all pairs $r+|w|\le2$. For example, $\dot Y_{0,1}=Y_{1,\emptyword}$, $\dot Y_{0,01}=Y_{1,0}$, $\dot Y_{0,11}=Y_{1,1}+\tfrac12Y_{1,\emptyword}$, and $\dot Y_{2,\emptyword}=(2\beta\mu+\alpha^2)Y_{1,\emptyword}+2(\alpha-\beta)Y_{2,\emptyword}$. Listing these equations in the displayed order gives \eqref{eq:A2}.
\end{proof}

\begin{corollary}[closed level-one and level-two coordinates]
\label{cor:level-two}
Let $c=\beta-\alpha>0$ and
\begin{equation}
\lambda_\infty=\frac{\beta\mu}{c}.
\end{equation}
Then
\begin{align}
\E[N_T]
&=\lambda_\infty T+(\ell-\lambda_\infty)\frac{1-e^{-cT}}{c},
\label{eq:EN}\\
\E\left[\int_0^T t\dd N_t\right]
&=\frac{\lambda_\infty T^2}{2}
+(\ell-\lambda_\infty)
\frac{1-(1+cT)e^{-cT}}{c^2}\label{eq:EtN}
\end{align}
Moreover,
\begin{equation}
\E\left[S_{01}(T)\right]=\E\left[\int_0^T t\dd N_t\right],
\qquad
\E\left[S_{10}(T)\right]=T\E[N_T]-\E\left[S_{01}(T)\right],
\end{equation}
and
\begin{equation}
\E[S^{M}_{11}(T)]-\E[S^{I}_{11}(T)]=\frac12\E[N_T].
\end{equation}
\end{corollary}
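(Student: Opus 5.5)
We read the needed rows off the level-two matrix of \cref{thm:hawkes-A2}, or equivalently off the coordinate formula \eqref{eq:closure-formula}, and solve the resulting triangular linear ODEs by hand. The rows for $Y_{0,\emptyword}\equiv1$ and $Y_{1,\emptyword}=\E[\lambda_t]$ decouple:
\begin{equation}
\dot Y_{1,\emptyword}=\beta\mu-(\beta-\alpha)Y_{1,\emptyword},\qquad Y_{1,\emptyword}(0)=\ell .
\end{equation}
With $c=\beta-\alpha$ this gives $\E[\lambda_t]=\lambda_\infty+(\ell-\lambda_\infty)e^{-ct}$. Since $\dot Y_{0,1}=Y_{1,\emptyword}$ and $S_1(T)=N_T$ (the level-one Marcus coordinate in the count direction), integrating from $0$ to $T$ yields \eqref{eq:EN}.

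For \eqref{eq:EtN} we use the row $\dot Y_{0,01}=Y_{1,0}$ together with
\begin{equation}
\dot Y_{1,0}=\beta\mu\,Y_{0,0}+Y_{1,\emptyword}-cY_{1,0},\qquad Y_{0,0}(t)=t .
\end{equation}
The cleaner route is to avoid this second ODE altogether. Pathwise, $S_{01}(T)=\int_0^T t\,\dd N_t$, because on the Marcus completed graph the time coordinate is constant along each vertical chord, so the chord contributes exactly $t\cdot 1$. Taking expectations against the compensator $\lambda_t\,\dd t$ then gives
\begin{equation}
\E\Big[\int_0^T t\,\dd N_t\Big]=\int_0^T t\,\E[\lambda_t]\,\dd t .
\end{equation}
Substituting the formula for $\E[\lambda_t]$ and using $\int_0^T t e^{-ct}\,\dd t=\bigl(1-(1+cT)e^{-cT}\bigr)/c^2$ yields \eqref{eq:EtN}. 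As a consistency check, one can verify that $Y_{1,0}(t)=\E[t\lambda_t]$ solves the displayed ODE. The only point needing care is integrability: the compensator identity requires $\E[N_T]<\infty$, which follows from the formula just derived (or from $\alpha<\beta$).

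For $S_{10}$ we use the level-two shuffle identity for the geometric Marcus lift (\cref{thm:shuffle}, \cref{cor:marcus-restores}):
\begin{equation}
S_0S_1=S_{01}+S_{10}.
\end{equation}
Since $S_0(T)=T$ is deterministic, taking expectations gives $\E[S_{10}]=T\,\E[N_T]-\E[S_{01}]$. This is the zero-covariance case of \cref{thm:geo-defect}(ii), because one factor is constant.

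Finally, \cref{prop:jump-correction} applied to the count coordinate, where every jump has $\Delta N=1$, gives the pathwise identity
\begin{equation}
S^M_{11}-S^I_{11}=\tfrac12\sum_u(\Delta N_u)^2=\tfrac12 N_T .
\end{equation}
Taking expectations completes the proof; this uses finiteness of $\E[N_T]$ only.

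I expect no real obstacle. The step most prone to error is the bookkeeping identification $S_{01}=\int t\,\dd N$: one must get the letter order right. Letter $0$ is integrated first, so $S_{01}=\int S_0\,\dd N$. One must also check that no diagonal Marcus correction enters, which holds because the time and count coordinates never jump together.
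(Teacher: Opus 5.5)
Your proposal is correct and follows the paper's route. The paper derives the mean-intensity ODE by taking expectations in the SDE for $\lambda$; you read it off the $Y_{1,\emptyword}$ row of $A_2$, which is the same computation. From there both arguments use the compensator identity for $\E[N_T]$ and $\E[\int_0^T t\,\dd N_t]$, the definitions plus the level-two shuffle relation for $S_{01}$ and $S_{10}$, and the expectation of the unit-jump Marcus correction.

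One small nuance in your last step: splitting $\E[S^M_{11}-S^I_{11}]$ into a difference of two expectations needs $\E[N_T^2]<\infty$, not merely $\E[N_T]<\infty$. This does hold for the exponential Hawkes process on a finite horizon.
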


\begin{proof}
Taking expectations in \eqref{eq:lambda-sde} gives
\begin{equation}
\frac{\dd}{\dd t}\E[\lambda_t]
=\beta\mu-(\beta-\alpha)\E[\lambda_t],
\qquad
\E[\lambda_t]=\lambda_\infty+(\ell-\lambda_\infty)e^{-ct}.
\end{equation}
Since $N_t-\int_0^t\lambda_s\dd s$ is a martingale,
\begin{equation}
\E[N_T]=\int_0^T\E[\lambda_t]\dd t,
\qquad
\E\left[\int_0^T t\dd N_t\right]=\int_0^T t\E[\lambda_t]\dd t.
\end{equation}
Evaluating these elementary integrals gives \eqref{eq:EN} and \eqref{eq:EtN}. The identities for $S_{01}$ and $S_{10}$ follow from the definitions of the time-count signature and the level-two shuffle relation. The final formula is the expectation of \cref{prop:jump-correction}, because unit jumps satisfy $\sum(\Delta N)^2=N_T$.
\end{proof}

\begin{theorem}[local identifiability from the first expected-signature coordinate]
\label{thm:hawkes-identifiability}
Assume $\lambda_0=\ell=\mu$ and $\alpha>0$. Let
\begin{equation}
F(T)=\E[N_T].
\end{equation}
Then the Hawkes parameters are recovered from $F$ near $T=0$ by
\begin{equation}
\mu=F'(0),
\qquad
\alpha=\frac{F''(0)}{F'(0)},
\qquad
\beta=\frac{F''(0)}{F'(0)}-\frac{F'''(0)}{F''(0)}.
\label{eq:ident-reconstruct}
\end{equation}
Thus the first expected-signature coordinate $T\mapsto\inner{\E[\Sig^M(X)_{0,T}]}{1}$ locally identifies $(\mu,\alpha,\beta)$.
\end{theorem}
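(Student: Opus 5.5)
The plan is to compute the first three derivatives of $F$ at $T=0$ in closed form and then invert the resulting triangular system. By \cref{cor:level-two}, more precisely by the intermediate identity in its proof, we have $F'(T)=\E[\lambda_T]$. Here $\E[\lambda_t]$ solves the scalar linear ODE
\begin{equation}
\frac{\dd}{\dd t}\E[\lambda_t]=\beta\mu-c\,\E[\lambda_t],\qquad c=\beta-\alpha>0,
\end{equation}
with $\E[\lambda_0]=\ell$. Hence $F'(T)=\lambda_\infty+(\ell-\lambda_\infty)e^{-cT}$ is smooth on $[0,\infty)$, and all derivatives at $0$ are understood as one-sided limits. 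Differentiating gives
\begin{equation}
F'(0)=\ell,\qquad F''(0)=c(\lambda_\infty-\ell),\qquad F'''(0)=-c^2(\lambda_\infty-\ell).
\end{equation}
Equivalently, one can read these off the ODE directly: $F''=\beta\mu-cF'$ and $F'''=-cF''$. This route avoids the closed form altogether.

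Next impose the stationary-start hypothesis $\ell=\mu$. Then $\lambda_\infty-\mu=\mu\beta/c-\mu=\mu\alpha/c$, which gives
\begin{equation}
F'(0)=\mu,\qquad F''(0)=\mu\alpha,\qquad F'''(0)=-c\,\mu\alpha=(\alpha-\beta)F''(0).
\end{equation}
The system is triangular, so it inverts immediately: $\mu=F'(0)$, then $\alpha=F''(0)/F'(0)$, then $F'''(0)/F''(0)=\alpha-\beta$, which is \eqref{eq:ident-reconstruct}.

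The divisions are legitimate because $F'(0)=\mu>0$ and $F''(0)=\mu\alpha>0$. This is exactly where the hypothesis $\alpha>0$ enters. If $\alpha=0$, then $F''\equiv0$ and $\beta$ is invisible, so the hypothesis is sharp.

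Finally, the identification $F(T)=\inner{\E[\Sig^M(X)_{0,T}]}{1}$ holds because the level-one coordinate along the count letter is $N_T-N_0=N_T$ for both lifts. This is the level-one part of the remark following \cref{thm:hopf-square}. It is also visible in \cref{thm:hawkes-closure}, where $\dot Y_{0,1}=Y_{1,\emptyword}=\E[\lambda_t]$.

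The main point needing care is justifying that $F$ is three times differentiable at $0$ with the stated derivatives, i.e.\ that $F'=\E[\lambda_\cdot]$ is continuous up to $0$. I would handle this with the matrix-exponential representation of \cref{thm:hawkes-closure}: $F(T)$ is the $(0,1)$-coordinate of $e^{TA_1}Y^{(1)}(0)$, hence real-analytic in $T$. Its derivatives at $0$ are then the corresponding coordinates of $A_1^kY^{(1)}(0)$, and computing these reproduces the three values above.
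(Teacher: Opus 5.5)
Your proposal is correct and follows the paper's proof essentially step for step: both take $F'(T)=\E[\lambda_T]=\lambda_\infty+(\ell-\lambda_\infty)e^{-cT}$ from \cref{cor:level-two}, set $\ell=\mu$ to get $F'(0)=\mu$, $F''(0)=\mu\alpha$, $F'''(0)=-c\mu\alpha$, and invert the resulting triangular system. Your extras are harmless supplements to that argument, namely the ODE shortcut $F''=\beta\mu-cF'$, the analyticity check via $e^{TA_1}$, the sharpness remark at $\alpha=0$, and the identification of $F$ with the level-one count coordinate; the paper leaves these implicit.
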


\begin{proof}
By \cref{cor:level-two}, $F'(T)=\E[\lambda_T]$. With $\ell=\mu$,
\begin{equation}
F'(T)=\lambda_\infty+(\mu-\lambda_\infty)e^{-cT},
\qquad c=\beta-\alpha,
\qquad \lambda_\infty=\frac{\beta\mu}{c}.
\end{equation}
Therefore
\begin{equation}
F'(0)=\mu.
\end{equation}
Also
\begin{equation}
\mu-\lambda_\infty
=\mu-\frac{\beta\mu}{\beta-\alpha}
=-\frac{\mu\alpha}{\beta-\alpha}
=-\frac{\mu\alpha}{c}.
\end{equation}
Hence
\begin{equation}
F''(0)=-c(\mu-\lambda_\infty)=\mu\alpha.
\end{equation}
Since $\alpha>0$ and $\mu>0$, $F''(0)>0$ and
\begin{equation}
\alpha=\frac{F''(0)}{F'(0)}.
\end{equation}
A further derivative gives
\begin{equation}
F'''(0)=c^2(\mu-\lambda_\infty)=-\mu\alpha c.
\end{equation}
Thus
\begin{equation}
c=-\frac{F'''(0)}{F''(0)},
\qquad
\beta=\alpha+c
=\frac{F''(0)}{F'(0)}-\frac{F'''(0)}{F''(0)}.
\end{equation}
This proves \eqref{eq:ident-reconstruct}.
\end{proof}

\begin{theorem}[collapsed count signature and affine transform]
\label{thm:affine-pgf}
For the unaugmented scalar counting path $N$, the Marcus signature collapses to
\begin{equation}
\Sig^M(N)_{0,T}=\exp_{\otimes}(N_T e_1).
\end{equation}
Consequently, its expected signature is determined by the moment generating function
\begin{equation}
M(T,\theta)=\E[e^{\theta N_T}].
\end{equation}
For the exponential Hawkes model, $M$ has the affine representation
\begin{equation}
M(T,\theta)=\exp\{A(T,\theta)+B(T,\theta)\ell\},
\end{equation}
where
\begin{equation}
\frac{\partial B}{\partial T}= -\beta B+e^{\theta+\alpha B}-1,
\qquad
\frac{\partial A}{\partial T}=\beta\mu B,
\qquad
A(0,\theta)=B(0,\theta)=0.
\end{equation}
\end{theorem}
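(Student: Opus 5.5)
Three pieces have to be established: the collapse of the signature, the reduction of the expected signature to the moment generating function, and the affine transform.

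\emph{Collapse.} The Marcus completed graph of the scalar counting path $N$ is a one-dimensional path. It is constant between jumps and runs along a unit vertical chord $e_1$ at each jump. By \cref{lem:signature-dynamics} (dropping the time letter), each jump right-multiplies the signature by $\expt(e_1)$ and nothing happens between jumps. Chen's identity (\cref{thm:chen}) then gives $\Sig^M(N)_{0,T}=\expt(e_1)^{\otimes N_T}$. Since all factors are powers of the same vector, they commute, and the product equals $\expt(N_Te_1)$.

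\emph{Reduction to the mgf.} The collapse gives $\inner{\Sig^M(N)_{0,T}}{1^k}=N_T^k/k!$, and every other coordinate vanishes because there is only one letter. Hence the expected signature is the sequence $(\E[N_T^k]/k!)_{k\ge0}$. These are the Taylor coefficients in $\theta$ of $M(T,\theta)$ at $\theta=0$. This requires $M$ to be finite near $\theta=0$, which holds for the subcritical Hawkes process with $\alpha<\beta$ for small $|\theta|$. Conversely, $M$ is recovered from the moments by analyticity on that neighbourhood.

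\emph{Affine transform.} I would use the Markov pair $(\lambda_t,N_t)$ and the exponential-affine ansatz
\[
u(t,\lambda,n)=\exp\{\theta n+A(T-t)+B(T-t)\lambda\}.
\]
The generator from \eqref{eq:lambda-sde} is
\[
\mathcal{G}f=\beta(\mu-\lambda)\partial_\lambda f+\lambda\big(f(\lambda+\alpha,n+1)-f(\lambda,n)\big).
\]
Requiring $\partial_t u+\mathcal{G}u=0$ and dividing by $u$ gives
\[
-A'-B'\lambda+\beta(\mu-\lambda)B+\lambda\big(e^{\theta+\alpha B}-1\big)=0.
\]
The constant term in $\lambda$ yields $A'=\beta\mu B$, and the coefficient of $\lambda$ yields $B'=-\beta B+e^{\theta+\alpha B}-1$. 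The terminal condition $u(T)=e^{\theta N_T}$ forces $A(0)=B(0)=0$. Since $N_0=0$ and $\lambda_0=\ell$, we get $M=u(0,\ell,0)=\exp\{A(T)+B(T)\ell\}$.

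The main obstacle is justifying the verification step. One must show that $u(t,\lambda_t,N_t)$ is a true martingale rather than only a local martingale, and that the Riccati ODE for $B$ has a solution on $[0,T]$ without blow-up. I would first restrict to $\theta\le0$, where $B\le0$ by a comparison argument. Then $u$ is bounded and the local martingale is a true martingale. I would then extend to small positive $\theta$ by analytic continuation in $\theta$, or alternatively by checking the moment bounds from \cref{thm:hawkes-closure}, which show that all polynomial moments are finite.
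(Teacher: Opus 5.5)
Your proposal is correct. For the affine transform it is the paper's argument: the same exponential-affine ansatz $e^{\theta n+A+B\lambda}$, the same generator of $(N,\lambda)$, and the same matching of the coefficients of $1$ and $\lambda$. You write it as $\partial_t u+\mathcal G u=0$ with $A(T-t),B(T-t)$, while the paper writes the backward equation in $T$; the computation is identical.

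The remaining differences are secondary.
\begin{itemize}
\item \textbf{Collapse.} The paper invokes the general fact that every one-dimensional bounded-variation signature is the tensor exponential of its increment. You instead multiply the $N_T$ jump factors $\exp_{\otimes}(e_1)$ by Chen's identity and use that they commute. Both work; yours is tailored to the chord structure, the paper's is independent of it.
\item \textbf{Finiteness of $M$.} You note that reading $\E[N_T^k]/k!$ as the Taylor coefficients of $M(T,\cdot)$ requires $M$ to be finite near $\theta=0$. The paper's proof omits this.
\item \textbf{Verification step.} You address global existence for the Riccati equation and the true-martingale property, which the paper also omits. Your comparison argument for $\theta\le0$ handles this cleanly, since there $A,B\le0$ and hence $0<u\le1$.
\end{itemize}

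One caution concerns your fallback through the polynomial moments of \cref{thm:hawkes-closure}. It would not suffice, because finiteness of all moments does not imply finiteness of the mgf for any $\theta>0$.

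Keep the analytic-continuation route instead. It even makes finiteness of $M$ for small $\theta>0$ a consequence rather than an input. With $e^{\theta}$ replaced by a complex parameter $z$, the Riccati system has the global solution $A=B\equiv0$ at $z=1$. It is therefore solvable on $[0,T]$ and analytic in $z$ near $z=1$. Pringsheim's theorem, applied to $\sum_n\mathbb{P}(N_T=n)z^n$, then forces the radius of convergence to exceed $1$.
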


\begin{proof}
In one dimension, every bounded-variation signature equals the tensor exponential of the total increment; this follows from the shuffle identity or from the symmetry of one-dimensional iterated integrals. Thus $\Sig^M(N)_{0,T}=\exp_{\otimes}(N_Te_1)$.

For the affine transform, define
\begin{equation}
u(T,\lambda)=\E_{\lambda}[e^{\theta N_T}].
\end{equation}
The Markov generator acting on functions of $(n,\lambda)$ is
\begin{equation}
\G f(n,\lambda)=\beta(\mu-\lambda)\partial_\lambda f(n,\lambda)
+\lambda\{f(n+1,\lambda+\alpha)-f(n,\lambda)\}.
\end{equation}
Use the ansatz $f(n,\lambda)=e^{\theta n}e^{A(T)+B(T)\lambda}$. The jump part contributes
\begin{equation}
\lambda\left(e^{\theta}e^{\alpha B}-1\right)f,
\end{equation}
and the drift part contributes
\begin{equation}
\beta(\mu-\lambda)Bf.
\end{equation}
Equating coefficients of $1$ and $\lambda$ in the backward equation gives the displayed ODEs for $A$ and $B$. The initial condition is $M(0,\theta)=1$.
\end{proof}

\begin{remark}
\Cref{thm:affine-pgf} explains why time augmentation is essential for path reconstruction. The unaugmented scalar signature contains all moments of the terminal count $N_T$, but it cannot record the event times. The time-augmented closure theorem, \cref{thm:hawkes-closure}, is the corresponding path-level object.
\end{remark}

\begin{remark}[no Riccati claim for log-signatures]
The affine transform in \cref{thm:affine-pgf} is only the scalar terminal-count transform of the collapsed one-dimensional path. It is not a Riccati formula for log-signature coordinates. The genuinely path-level expected-signature object in this paper is the finite linear moment closure of \cref{prop:affine-sig,thm:hawkes-closure}.
\end{remark}

\section{Directional cross-area for multivariate Hawkes processes}
\label{sec:cross-area}

For a two-channel counting path $N=(N^1,N^2)$, define the antisymmetric second-level area
\begin{equation}
A_T^{12}=\frac12\left(\inner{\Sig^M}{12}-\inner{\Sig^M}{21}\right)
=\frac12\int_0^T\big(N^1_{s-}\dd N^2_s-N^2_{s-}\dd N^1_s\big).
\label{eq:cross-area}
\end{equation}
The symmetric second level is constrained by shuffle identities; the antisymmetric part is the signature-native coordinate that records ordering.

\begin{proposition}[cross-area as an affine moment]
\label{prop:cross-area-affine}
For a multivariate exponential Hawkes process with common decay, $\E[A_T^{12}]$ is a coordinate of the affine expected-signature closure in \cref{prop:affine-sig}. Equivalently,
\begin{equation}
\E[A_T^{12}]
=\frac12\int_0^T\left(\E[N^1_s\lambda^2_s]-\E[N^2_s\lambda^1_s]\right)\dd s,
\label{eq:cross-area-intensity}
\end{equation}
and the vector of moments $\E[N^i_s\lambda^j_s]$ closes linearly after adjoining the Hawkes memory variables.
\end{proposition}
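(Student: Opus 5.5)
The proof has three steps: rewrite the cross-area, take expectations via the compensator, and check linear closure.

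\emph{Step 1 (pathwise form).} The Marcus and forward lifts differ only on the symmetric part. By \cref{prop:jump-correction} the correction $\tfrac12\sum\Delta N\otimes\Delta N$ is a symmetric tensor, so it cancels in $\inner{\Sig^M}{12}-\inner{\Sig^M}{21}$. We may therefore replace the Marcus coordinates by the forward \Ito{} coordinates $\int_{(0,T]}N^1_{s-}\dd N^2_s$ and $\int_{(0,T]}N^2_{s-}\dd N^1_s$. This also justifies the second expression in \eqref{eq:cross-area}. For a simple multivariate point process, simultaneous jumps of $N^1$ and $N^2$ occur with probability zero. This is not needed for the cancellation, since the correction is symmetric anyway, but it confirms that the left limits are unambiguous.

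\emph{Step 2 (expectation).} Write $\dd N^j_s=\lambda^j_s\dd s+\dd M^j_s$ with $M^j$ the compensated martingales. The integrand $N^i_{s-}$ is predictable and, on $[0,T]$, square integrable, because Hawkes counts with $\alpha<\beta$ in the stable regime have all moments. Hence $\int N^i_{s-}\dd M^j_s$ is a true martingale with mean zero, which yields $\E\int_0^T N^i_{s-}\dd N^j_s=\int_0^T\E[N^i_{s}\lambda^j_s]\dd s$. Here $N^i_{s-}$ may be replaced by $N^i_s$ because they differ on a Lebesgue-null set of times. This gives \eqref{eq:cross-area-intensity}. The main technical point is to verify integrability rigorously, e.g.\ $\E[(N^i_T)^2]<\infty$ and $\E\int_0^T (N^i_s)^2\lambda^j_s\dd s<\infty$. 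These follow from the finite polynomial-moment closure itself, since the level-two moment system of \cref{thm:hawkes-closure}, extended to the multivariate case, is a finite linear ODE with finite solutions.

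\emph{Step 3 (linear closure).} With common decay $\beta$, the intensities satisfy $\dd\lambda^j_t=\beta(\mu_j-\lambda^j_t)\dd t+\sum_k\alpha_{jk}\dd N^k_t$. Hence $(N^1,N^2,\lambda^1,\lambda^2)$ is an affine Markov process of the form \eqref{eq:affine-generator}, with jump measure $\sum_k\lambda^k\delta_{(e_k,\alpha_{\cdot k})}$ depending affinely on the state. Applying the generator to $f=n_i\ell_j$ gives
\begin{equation*}
\frac{\dd}{\dd s}\E[N^i_s\lambda^j_s]=\E[\lambda^i_s\lambda^j_s]+\beta\mu_j\E[N^i_s]-\beta\E[N^i_s\lambda^j_s]+\sum_k\alpha_{jk}\E[N^i_s\lambda^k_s]+\alpha_{ji}\E[\lambda^i_s].
\end{equation*}
This involves only degree-at-most-two moments of the augmented state $(N,\lambda)$, which themselves satisfy a closed linear system by the same generator computation. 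This exhibits the claimed linear closure and identifies $\E[A^{12}_T]$, via the level-two word $12$ minus $21$, as a coordinate of the system in \cref{prop:affine-sig}. The Hawkes memory variables adjoined there are the intensities $\lambda^j$, which are Markov precisely because of the common exponential decay.
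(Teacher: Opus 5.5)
Your proposal is correct and follows the paper's route: the compensator identity with $H_s=N^i_{s-}$ gives \eqref{eq:cross-area-intensity}, and Dynkin/affine closure of the degree-two moments of $(N,\lambda)$ gives the linear system. Your explicit generator formula and the Marcus-versus-forward antisymmetry check supply details the paper leaves implicit. One simplification on integrability: because $N^i_{s-}\ge 0$, the compensator identity holds in $[0,\infty]$ by monotone convergence without any square-integrability, which also repairs the paper's appeal to bounded $H$. So you only need a finite-horizon moment bound via localization and Gronwall. That replaces your slightly circular appeal to the moment ODE, and the stability condition $\alpha<\beta$ you invoke is not needed on $[0,T]$.
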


\begin{proof}
The compensator identity gives $\E[\int H_s\dd N^j_s]=\int\E[H_s\lambda^j_s]\dd s$ for bounded predictable $H$. Applying this with $H_s=N^i_{s-}$ gives \eqref{eq:cross-area-intensity}. For exponential kernels, $(N,\lambda)$ is Markov affine; the products $N^i\lambda^j$ are degree-two affine-polynomial moments, so the same Dynkin closure as in \cref{prop:affine-sig} applies.
\end{proof}

\begin{theorem}[leading-order directional excitation law]
\label{thm:cross-area-leading}
Consider a two-channel Hawkes process with equal baselines $\mu$, common decay $\beta$, zero self-excitation, and cross-kernels
\begin{equation}
\phi_{12}(t)=a_{12}e^{-\beta t},\qquad \phi_{21}(t)=a_{21}e^{-\beta t},
\end{equation}
where $a_{ij}$ is the excitation of channel $i$ by channel $j$. As $(a_{12},a_{21})\to0$,
\begin{equation}
\E[A_T^{12}]=c_T(a_{21}-a_{12})+O(\|(a_{12},a_{21})\|^2),
\label{eq:cross-leading}
\end{equation}
where
\begin{equation}
 c_T=\frac12\int_0^T\left\{\mu K_s+\mu^2\left(sK_s-H_s\right)\right\}\dd s,
\qquad
K_s=\int_0^s e^{-\beta u}\dd u,
\qquad
H_s=\int_0^s K_u\dd u.
\label{eq:cross-coeff}
\end{equation}
Moreover $c_T>0$ for every $T>0$.
\end{theorem}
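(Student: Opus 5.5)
The plan is to start from \cref{prop:cross-area-affine}, which already reduces the problem to intensity-weighted count moments:
\[
\E[A_T^{12}]=\tfrac12\int_0^T\big(\E[N^1_s\lambda^2_s]-\E[N^2_s\lambda^1_s]\big)\dd s .
\]
I would expand each of the two integrands to first order in $a=(a_{12},a_{21})$. With zero self-excitation the intensities are
\[
\lambda^1_s=\mu+a_{12}\int_{(0,s)}e^{-\beta(s-u)}\dd N^2_u,
\qquad
\lambda^2_s=\mu+a_{21}\int_{(0,s)}e^{-\beta(s-u)}\dd N^1_u .
\]
Substituting gives
\[
\E[N^1_s\lambda^2_s]=\mu\,\E[N^1_s]+a_{21}\,\E\Big[N^1_s\int_{(0,s)}e^{-\beta(s-u)}\dd N^1_u\Big],
\]
and symmetrically for $\E[N^2_s\lambda^1_s]$. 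It therefore suffices to compute $\E[N^1_s]$ to first order, and the mixed moment only to zeroth order, since it is already multiplied by $a_{21}$.

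\emph{First-order mean.} By the compensator identity, $\E\lambda^1_s=\mu+a_{12}\int_0^s e^{-\beta(s-u)}\E\lambda^2_u\,\dd u$, and $\E\lambda^2_u=\mu+O(\|a\|)$. Hence $\E\lambda^1_s=\mu+a_{12}\mu K_s+O(\|a\|^2)$. Integrating in $s$ gives $\E N^1_s=\mu s+a_{12}\mu H_s+O(\|a\|^2)$, and by symmetry $\E N^2_s=\mu s+a_{21}\mu H_s+O(\|a\|^2)$.

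\emph{Zeroth-order mixed moment.} At $a=0$ the process $N^1$ is a Poisson process of rate $\mu$. Writing $N^1_s=\int_{(0,s]}\dd N^1_v$, the product of the two stochastic integrals splits into a diagonal and an off-diagonal part:
\begin{itemize}
\item the diagonal part ($v=u$) contributes $\mu\int_0^s e^{-\beta(s-u)}\dd u=\mu K_s$;
\item the off-diagonal part contributes $\mu^2 sK_s$, using the second factorial moment measure $\mu^2\,\dd u\,\dd v$ of a Poisson process.
\end{itemize}
So $\E[N^1_s\int_{(0,s)}e^{-\beta(s-u)}\dd N^1_u]=\mu K_s+\mu^2 sK_s+O(\|a\|)$.

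\emph{Assembling.} Collecting terms,
\[
\E[N^1_s\lambda^2_s]=\mu^2 s+a_{12}\mu^2H_s+a_{21}(\mu K_s+\mu^2 sK_s)+O(\|a\|^2),
\]
and the mirror expression holds for $\E[N^2_s\lambda^1_s]$ with $a_{12}$ and $a_{21}$ swapped. Subtracting, the $\mu^2 s$ terms cancel and the difference is
\[
(a_{21}-a_{12})\big(\mu K_s+\mu^2(sK_s-H_s)\big)+O(\|a\|^2).
\]
Integrating over $[0,T]$ and multiplying by $\tfrac12$ yields \eqref{eq:cross-leading} with $c_T$ as in \eqref{eq:cross-coeff}.

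\emph{Positivity of $c_T$.} For $s>0$ we have $K_s>0$. Also $sK_s-H_s=\int_0^s(K_s-K_u)\dd u\ge0$, because $K$ is increasing. Hence the integrand in \eqref{eq:cross-coeff} is strictly positive for $s>0$, and $c_T>0$ for every $T>0$.

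\emph{Main obstacle: the remainder.} The delicate step is making the remainder $O(\|a\|^2)$ rigorous, uniformly over $s\in[0,T]$. I would use the linear moment closure of \cref{prop:cross-area-affine,prop:affine-sig}. There, the vector containing $\E[N^i_s\lambda^j_s]$ solves $\dot U=A(a)U$, where the entries of $A(a)$ are polynomial in $a$ and the initial data do not depend on $a$. Consequently $U_s=e^{sA(a)}U_0$ is real-analytic in $a$, uniformly for $s\in[0,T]$, so a first-order Taylor expansion with a uniform quadratic remainder is legitimate. The first-order coefficients computed heuristically above can then be confirmed by differentiating the ODE at $a=0$, where the system reduces to the independent-Poisson moments.
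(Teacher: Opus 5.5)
Your proposal is correct and follows the paper's proof essentially step for step. You make the same reduction through \eqref{eq:cross-area-intensity}, use the same first-order means $\E[N^i_s]=\mu s+\mu a_{ij}H_s+O(\|a\|^2)$, the same diagonal/off-diagonal Poisson computation giving $\mu K_s+\mu^2 sK_s$, the same cancellation of the $\mu^2 s$ terms, and the same monotonicity argument $sK_s-H_s=\int_0^s(K_s-K_u)\dd u\ge0$ for $c_T>0$. Your last paragraph goes further than the paper, which leaves the remainder implicit: you make the $O(\|a\|^2)$ remainder uniform in $s\in[0,T]$ through analyticity of $e^{sA(a)}U_0$ for the closed linear moment system with $a$-independent initial data, and this is sound. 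You are also right that $\lambda^j_s$ is exactly affine in $a$ here, so the paper's $O(a^2)$ term in the intensity expansion is superfluous.
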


\begin{proof}
By \eqref{eq:cross-area-intensity}, it suffices to expand $\E[N^1_s\lambda^2_s]-\E[N^2_s\lambda^1_s]$ to first order in $a_{12},a_{21}$. At zero excitation, $N^1$ and $N^2$ are independent Poisson processes with rate $\mu$. The mean count expansion is
\begin{equation}
\E[N^i_s]=\mu s+\mu a_{ij}H_s+O(a^2),\qquad i\ne j.
\end{equation}
Also,
\begin{equation}
\lambda^2_s=\mu+a_{21}\int_0^s e^{-\beta(s-u)}\dd N^1_u+O(a^2),
\qquad
\lambda^1_s=\mu+a_{12}\int_0^s e^{-\beta(s-u)}\dd N^2_u+O(a^2).
\end{equation}
For a Poisson process $P$ of rate $\mu$,
\begin{equation}
\E\left[P_s\int_0^s e^{-\beta(s-u)}\dd P_u\right]=\mu^2sK_s+\mu K_s,
\end{equation}
where the second term is the diagonal contribution of the same jump. Therefore
\begin{align}
\E[N^1_s\lambda^2_s]-\E[N^2_s\lambda^1_s]
&=(a_{21}-a_{12})\left\{\mu K_s+\mu^2(sK_s-H_s)\right\}+O(a^2).
\end{align}
Integrating over $s$ and multiplying by $1/2$ proves \eqref{eq:cross-leading}. Since $K_s$ is increasing and nonzero, $\mu K_s>0$ for $s>0$ and $sK_s-H_s=\int_0^s(K_s-K_u)\dd u\ge0$, hence $c_T>0$.
\end{proof}

\section{Reversal, antipodes, and cross-area sign}
\label{sec:antipode}

\begin{theorem}[reversal is the antipode]
\label{thm:antipode}
For a weakly geometric path $X$ with reversal $\overleftarrow X$ and every word $w=(i_1\cdots i_k)$,
\begin{equation}
\inner{\Sig(\overleftarrow X)}{i_1\cdots i_k}=(-1)^k\,\inner{\Sig(X)}{i_k\cdots i_1}.
\end{equation}
\end{theorem}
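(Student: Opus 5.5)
The plan is to prove the identity first for bounded-variation paths by a direct change of variables, and then pass to weakly geometric paths by approximation.

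For a bounded-variation path $X$ on $[0,T]$, define the reversal $\overleftarrow X_t=X_{T-t}$. Then $\dd\overleftarrow X_t=-\dd X_{T-t}$ in the Stieltjes sense, so each integrator contributes one factor of $-1$. In the iterated integral
\begin{equation}
\inner{\Sig(\overleftarrow X)}{i_1\cdots i_k}=\int_{0<t_1<\cdots<t_k<T}\dd\overleftarrow X^{i_1}_{t_1}\cdots\dd\overleftarrow X^{i_k}_{t_k},
\end{equation}
I substitute $s_p=T-t_p$. This maps the simplex $\{t_1<\cdots<t_k\}$ onto $\{s_k<\cdots<s_1\}$ and produces the sign $(-1)^k$. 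Relabelling the variables in increasing order, $r_q=s_{k+1-q}$, turns the integral into the ordinary iterated integral of $X$ over $\{r_1<\cdots<r_k\}$ against the letters $i_k,\ldots,i_1$. This gives the displayed identity. No atoms arise, because bounded-variation signatures in the sense of the paper's definition use continuous integrators.

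As an independent check, I would note that the right-hand side is the antipode $\alpha(w)=(-1)^{|w|}\overleftarrow w$ of the shuffle Hopf algebra evaluated against $\Sig(X)$. Since $\Sig(X)$ is group-like by \cref{thm:logsig}, its inverse in $T((E))$ is obtained by applying this antipode. Chen's identity (\cref{thm:chen}) gives $\Sig(X*\overleftarrow X)=\Sig(X)\otimes\Sig(\overleftarrow X)$. The path $X*\overleftarrow X$ is tree-like, so by \cref{thm:hambly} its signature is $\one$. Hence $\Sig(\overleftarrow X)=\Sig(X)^{-1}$, and the formula for the inverse of a group-like element reproduces the claim. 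I would include this to justify the title of the theorem, ``reversal is the antipode''.

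The main obstacle is the extension to weakly geometric rough paths, where the iterated integrals are not classical. First I would fix the definition of reversal as $\overleftarrow{\mathbf X}_{s,t}=\mathbf X_{T-t,T-s}^{-1}$, the inverse taken in $G^{(\lfloor p\rfloor)}$. With this convention the identity holds by construction at the truncation level, because the truncated group inverse is given by the antipode formula for group-like elements. To reach all higher levels I would invoke Lyons' extension theorem: the extension commutes with reversal, since it is the unique multiplicative functional with controlled $p$-variation, and $s,t\mapsto \Sig(\mathbf X)_{T-t,T-s}^{-1}$ is such a functional extending $\overleftarrow{\mathbf X}$. The group-like identity then gives the formula at every level.

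If that uniqueness argument feels too delicate, I would fall back on approximation. Geometric rough paths are $p'$-variation limits, for $p'>p$, of bounded-variation paths, and reversal is continuous in that topology. Both sides of the identity are continuous functions of the path, so the identity passes to the limit from the bounded-variation case.
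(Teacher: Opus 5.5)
Your proof is correct, but your main line of argument differs from the paper's. The paper argues purely algebraically. It asserts $\Sig(\overleftarrow X)_{0,T}=\Sig(X)_{0,T}^{-1}$ ``by Chen's identity'' and then reads off coordinates from $\inner{g^{-1}}{w}=\inner{g}{S(w)}$ with $S(i_1\cdots i_k)=(-1)^k\,i_k\cdots i_1$, treating bounded-variation and rough paths together.

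Your primary argument is instead the substitution $s_p=T-t_p$ on the simplex. It proves the coordinate identity for bounded-variation paths using neither group-likeness nor any cancellation. You then pass separately to weakly geometric rough paths, by one of two routes. The first is uniqueness in Lyons' extension theorem; your checks are right, since $(s,t)\mapsto\Sig(\mathbf X)_{T-t,T-s}^{-1}$ is multiplicative, and inverting a group-like element only reverses and signs each tensor level, so $p$-variation bounds are preserved. The second is approximation combined with \cref{thm:itolyons}; take $p<p'<\lfloor p\rfloor+1$ so that the truncation level is unchanged.

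What your route buys is completeness. Chen's identity alone only gives $\Sig(X*\overleftarrow X)=\Sig(X)\otimes\Sig(\overleftarrow X)$. So the paper silently uses the cancellation $\Sig(X*\overleftarrow X)=\one$ and, for rough paths, the compatibility of reversal with the Lyons extension. Your secondary check makes the former explicit via \cref{thm:hambly}. Your direct computation, combined with the antipode formula for group-like elements, actually proves that cancellation rather than assuming it. What the paper's route buys is brevity and the content of the title, namely that reversal realizes the group inverse; the change of variables by itself does not show this.

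Finally, your caveat about atoms is not needed. The substitution maps strict simplices to strict simplices, so the same coordinate identity holds for strict iterated sums when the increments are reversed in order and sign. What fails in that non-geometric setting is only the identification of the right-hand side with the inverse, and that is precisely where geometricity enters.
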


\begin{proof}
By Chen's identity, $\Sig(\overleftarrow X)_{0,T}=\Sig(X)_{0,T}^{-1}$. The inverse of a group-like series is the antipode $S$ of the tensor Hopf algebra. On words the antipode is
\begin{equation}
S(i_1\cdots i_k)=(-1)^k(i_k\cdots i_1),
\end{equation}
and coordinate duality gives $\inner{g^{-1}}{w}=\inner{g}{S(w)}$.
\end{proof}

\begin{corollary}[the cross-area is reversal-odd]
\label{cor:area-odd}
For
\begin{equation}
A^{ij}(X)=\frac12\left(\inner{\Sig(X)}{ij}-\inner{\Sig(X)}{ji}\right),
\end{equation}
one has
\begin{equation}
A^{ij}(\overleftarrow X)=-A^{ij}(X).
\end{equation}
Hence for a process whose time reversal exchanges the excitation roles, the expected cross-area changes sign, as in the reversal experiment of \cref{sec:numerics}.
\end{corollary}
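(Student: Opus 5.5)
The plan is to read both identities directly off \cref{thm:antipode} and then combine them by linearity. The corollary is a level-two statement, so only words of length $k=2$ are involved.

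\emph{Step 1.} Apply \cref{thm:antipode} with $w=ij$ and $k=2$. This gives
\begin{equation}
\inner{\Sig(\overleftarrow X)}{ij}=(-1)^2\inner{\Sig(X)}{ji}=\inner{\Sig(X)}{ji}.
\end{equation}
In the same way, $\inner{\Sig(\overleftarrow X)}{ji}=\inner{\Sig(X)}{ij}$. So reversal swaps the two ordered level-two coordinates and introduces no sign, because $(-1)^2=1$.

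\emph{Step 2.} Substitute these into the definition of $A^{ij}(\overleftarrow X)$:
\begin{equation}
A^{ij}(\overleftarrow X)=\frac12\left(\inner{\Sig(X)}{ji}-\inner{\Sig(X)}{ij}\right)=-A^{ij}(X).
\end{equation}
The key point is that the oddness comes from the word reversal $ij\mapsto ji$ in the antipode, not from the sign factor. As a cross-check, the symmetric part $\inner{\Sig}{ij}+\inner{\Sig}{ji}$ is invariant under reversal. This is consistent with the shuffle identity, which makes that symmetric part equal to $X^iX^j$ of the increment; reversal negates both increments, so their product is unchanged.

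\emph{Step 3.} For the probabilistic consequence, take expectations. Linearity gives $\E[A^{ij}(\overleftarrow X)]=-\E[A^{ij}(X)]$, provided the level-two coordinates are integrable; for Hawkes counts this holds by the finite moments used in \cref{thm:hawkes-closure}. Suppose the law of $\overleftarrow X$ coincides, up to relabelling, with that of a process whose excitation roles are exchanged. Then the expected cross-area of that process equals $-\E[A^{ij}(X)]$. This matches the sign flip $a_{21}-a_{12}\mapsto a_{12}-a_{21}$ in \cref{thm:cross-area-leading}.

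The main obstacle is checking that \cref{thm:antipode} applies to the jump paths where $A^{ij}$ is actually used, since it is stated for weakly geometric paths. The cross-area in \eqref{eq:cross-area} is written with left limits, but it is defined from the Marcus signature. The Marcus completed graph is weakly geometric by \cref{cor:marcus-restores}, so the theorem applies there. Moreover the antisymmetric part is the same for the Itô and Marcus lifts, because by \cref{prop:jump-correction} the Marcus correction is symmetric. Hence the reversal-oddness holds for either convention.
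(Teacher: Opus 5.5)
Your proof is correct and is the paper's argument: \cref{thm:antipode} at level two gives $\inner{\Sig(\overleftarrow X)}{ij}=\inner{\Sig(X)}{ji}$ and its mirror, and subtracting yields the sign change. Your Step~3 and the Marcus/\Ito{} applicability check go beyond the paper, whose proof does not argue the ``hence'' clause, and they are sound with one caveat: the reversed counting path is nonincreasing, so ``up to relabelling'' should mean passing to the counting process $t\mapsto N_T-N_{(T-t)-}$ (harmless, since $y\mapsto c-y$ acts on level two by $(-1)^2=1$), not swapping channel labels, which would flip the sign of $A^{12}$ a second time.
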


\begin{proof}
By \cref{thm:antipode} at level two,
\begin{equation}
\inner{\Sig(\overleftarrow X)}{ij}=\inner{\Sig(X)}{ji},
\qquad
\inner{\Sig(\overleftarrow X)}{ji}=\inner{\Sig(X)}{ij}.
\end{equation}
Subtracting gives the claim.
\end{proof}

\section{Moment threshold for heavy-tailed drivers}
\label{sec:stable-threshold}

\begin{theorem}[stable-law moment threshold for the expected signature]
\label{thm:stable-threshold}
Let $X$ be a one-dimensional symmetric $\gamma$-stable Levy process with $\gamma\in(0,2)$ and let $\Sig^M(X)$ be its geometric Marcus lift. For every integer $k\ge1$,
\begin{equation}
\E\left[\left|\inner{\Sig^M(X)_{0,T}}{1^k}\right|\right]<\infty
\quad\Longleftrightarrow\quad
k<\gamma.
\end{equation}
Consequently the full expected signature is not finite beyond the levels allowed by the stable moment threshold, and expected-signature characterization is unavailable at every level $k\ge\lceil\gamma\rceil$.
\end{theorem}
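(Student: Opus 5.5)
The plan is to reduce the statement to the classical moment criterion for stable laws, using the one-dimensional collapse of the signature. In one dimension every geometric (Marcus) lift equals the tensor exponential of its total increment: the argument in \cref{thm:affine-pgf} applies verbatim to the Marcus completed graph, since that graph is an ordinary bounded-variation-type path in $\R$. More precisely, the Marcus lift is the limit of geometric lifts of one-dimensional paths, and for each of those the shuffle identity forces $\inner{\Sig}{1^k}=(\inner{\Sig}{1})^k/k!$. Hence
\begin{equation}
\inner{\Sig^M(X)_{0,T}}{1^k}=\frac{(X_T-X_0)^k}{k!},
\end{equation}
so $\E|\inner{\Sig^M(X)_{0,T}}{1^k}|<\infty$ if and only if $\E|X_T-X_0|^k<\infty$. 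To avoid analytic subtleties for infinite-variation stable paths ($\gamma\ge1$), I would note that the Marcus lift is defined through the shuffle-compatible (weakly geometric) extension. The identity $\inner{\Sig}{1}^k=k!\,\inner{\Sig}{1^k}$ then follows purely algebraically from the shuffle relation $1\shuffle 1^{k-1}=k\,1^k$ by induction, so no pathwise integral estimates are needed.

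The second step is the stable-law moment threshold. $X_T-X_0$ is symmetric $\gamma$-stable with scale $T^{1/\gamma}$, and its density has two-sided tails $P(|X_T|>x)\sim C_\gamma T x^{-\gamma}$ with $C_\gamma>0$ for $\gamma\in(0,2)$. I would cite this, or derive it from the Lévy measure $c|z|^{-1-\gamma}\dd z$ via the standard result that the tail of an infinitely divisible law is asymptotic to the tail of its Lévy measure (Sato, Thm.~25.3 gives the moment equivalence directly). Then $\E|X_T|^p<\infty$ if and only if $p<\gamma$. Applying this with $p=k$ gives the equivalence.

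For the consequences, if $k\ge\gamma$ the level-$k$ coordinate has infinite absolute mean, so $\E\Sig^M$ is undefined at level $k$. The smallest integer $k\ge\gamma$ is $\lceil\gamma\rceil$, with the boundary case $k=\gamma=1$ (Cauchy) excluded because $\E|X_T|=\infty$ there. Expected-signature characterization requires all levels to be finite, so it fails.

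The main obstacle is the first step for $\gamma\ge1$: one must justify that the geometric Marcus lift of an infinite-variation path exists and satisfies the shuffle identity. I will rely on the weakly geometric Marcus construction cited in \cref{cor:marcus-restores}, since weak geometricity is exactly the shuffle character property. A secondary point is the tail constant $C_\gamma>0$, which I would take from Sato.
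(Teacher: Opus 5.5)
Your proposal is correct and follows the paper's proof. You collapse the one-dimensional Marcus signature to $\inner{\Sig^M(X)_{0,T}}{1^k}=X_T^k/k!$, invoke the stable-law criterion $\E|X_T|^p<\infty\iff p<\gamma$ from Sato, and obtain the level statement as the contrapositive. Your shuffle induction via $1\shuffle 1^{k-1}=k\,1^k$ avoids the fact that the paths have infinite variation when $\gamma\ge1$, and together with the appeal to Sato's Theorem~25.3 it makes explicit two steps the paper asserts in one line.
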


\begin{proof}
In one dimension the geometric signature is the tensor exponential of the total increment:
\begin{equation}
\inner{\Sig^M(X)_{0,T}}{1^k}=\frac{X_T^k}{k!}.
\end{equation}
A symmetric $\gamma$-stable random variable has finite absolute moment of order $k$ if and only if $k<\gamma$ \citep{sato1999}. The equivalence follows immediately. The final statement is the contrapositive: a finite expected signature through level $k$ would require the displayed coordinate to be integrable.
\end{proof}

\begin{proposition}[normalized expected signature is characteristic without moments]
\label{prop:normalized}
Under the characteristic-kernel hypotheses of \citet{chevyrev2022}, there is a tensor normalization $\Lambda$ for which $\Lambda(\Sig(X))$ is uniformly bounded on the relevant path space, so $\E[\Lambda(\Sig(X))]$ exists for every probability law. The induced maximum mean discrepancy is a metric on laws. In particular, for heavy-tailed drivers excluded by the stable threshold in \cref{thm:stable-threshold}, where raw coordinates $\E[\Sig^k]$ may diverge, the law can still be characterized by the normalized expected signature.
\end{proposition}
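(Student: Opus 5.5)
The plan is to derive this proposition by specialising the Chevyrev--Oberhauser normalization theorem \citep{chevyrev2022} to the present setting. It is essentially a cited result, so the work is to check that its hypotheses cover the path spaces used here. The argument has three steps.

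\emph{Step 1: construct the normalization.} Following \citet{chevyrev2022}, fix a function $\psi:[1,\infty)\to[1,\infty)$ that grows at most quadratically. For a group-like $g\in T((E))$, define $\Lambda(g)=\delta_{\lambda(g)}g$. Here $\delta_\lambda$ is the dilation that multiplies level $k$ by $\lambda^k$. The scalar $\lambda(g)$ is the unique solution of $\norm{\delta_\lambda g}^2=\psi(\norm{g})$. Existence and uniqueness come from monotonicity of $\lambda\mapsto\norm{\delta_\lambda g}^2$, which equals $\sum_k\lambda^{2k}\norm{g_k}^2$ and increases from $1$ to $\infty$ unless $g=\one$. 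By construction $\norm{\Lambda(g)}^2=\psi(\norm g)\le\sup\psi<\infty$ when $\psi$ is chosen bounded. Hence $\Lambda(\Sig(X))$ lies in a fixed ball of the Hilbert completion. Its Bochner expectation therefore exists for every Borel law, with no moment assumption at all. This step is elementary once measurability of $\lambda(\cdot)$ is checked, which follows from the implicit function theorem.

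\emph{Step 2: injectivity on laws.} This is the key step and I would import it rather than reprove it. By \cref{thm:hambly}, and the geometric Marcus lift for \cadlag{} inputs (\cref{cor:marcus-restores}), the map $x\mapsto\Lambda(\Sig(x))$ is injective on reduced, time-augmented paths. It is also continuous for the relevant $p$-variation or Skorokhod-type topology. Its coordinate functionals $x\mapsto\inner{\Lambda(\Sig(x))}{w}$ form a point-separating family closed under products up to the shuffle structure. Chevyrev--Oberhauser show that this family generates a subalgebra that is dense in the strict topology on bounded continuous functions. A strictly dense feature map is characteristic. So $\mu\mapsto\E_\mu[\Lambda(\Sig)]$ is injective, and the induced MMD, given as in \cref{prop:kernel-mmd}(ii) by $\norm{\E_\mu\Lambda(\Sig)-\E_\nu\Lambda(\Sig)}$, is a metric. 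The main obstacle is verifying their tightness and strict-topology hypotheses for the Marcus completed graphs of jump paths. I would handle this by restricting to time-augmented paths, which are automatically reduced, as in the proof of \cref{thm:marcus-unique}.

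\emph{Step 3: apply to heavy tails.} For the symmetric $\gamma$-stable driver of \cref{thm:stable-threshold}, the time-augmented Marcus path is almost surely a finite-$p$-variation \cadlag{} path. Steps 1--2 then apply to its law. Raw coordinates diverge at level $k\ge\lceil\gamma\rceil$, but $\E[\Lambda(\Sig(X))]$ is finite and still determines the law.
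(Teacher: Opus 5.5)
Your proposal takes essentially the same route as the paper, whose proof simply invokes the Chevyrev--Oberhauser normalized-signature construction: boundedness of $\Lambda\circ\Sig$ gives the mean for every law, characteristicness gives an injective mean embedding and hence a metric MMD, and the stable threshold obstructs only raw polynomial moments. Your Steps 1--3 unpack this in more detail, with one point to tighten: Step 2 claims $x\mapsto\Lambda(\Sig(x))$ is injective, which needs $\Lambda$ itself to be injective, and this Hambly--Lyons does not give and a merely bounded $\psi$ does not guarantee (if $\psi(a)=\psi(b)$ with $a\neq b$, then $\Lambda$ identifies suitable $g$ and $\delta_c g$), so you should also require $\psi$ to be injective, as Chevyrev--Oberhauser do.
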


\begin{proof}
This is the normalized-signature-kernel construction of \citet{chevyrev2022}. The normalization makes the feature map bounded, so all first moments of the normalized tensor feature exist. Characteristicness of the associated kernel gives injectivity of the mean embedding, and the corresponding MMD is a metric on laws. The stable-threshold theorem only obstructs raw polynomial moments; bounded normalized features are not subject to that moment threshold.
\end{proof}

\begin{theorem}[signature continuity]
\label{thm:itolyons}
Fix $N<\infty$ and a bounded set of weakly geometric $p$-rough paths. The map
\begin{equation}
\mathbf x\longmapsto \Sig^{(N)}(\mathbf x)
\end{equation}
from the $p$-variation rough-path topology to $T^{(N)}(\R^d)$ is continuous.
\end{theorem}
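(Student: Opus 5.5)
The plan is to deduce the statement from Lyons' extension theorem together with its continuity part; this is the standard route to the Itô--Lyons continuity of the signature. Write $\lfloor p\rfloor$ for the integer part of $p$. A weakly geometric $p$-rough path $\mathbf x$ is a multiplicative functional $(\mathbf x^{1},\ldots,\mathbf x^{\lfloor p\rfloor})$ of finite $p$-variation, and $\Sig^{(N)}(\mathbf x)$ is its unique extension to level $N$.

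If $N\le\lfloor p\rfloor$, the map $\mathbf x\mapsto\Sig^{(N)}(\mathbf x)$ is just a projection. It is then continuous, indeed Lipschitz, in the $p$-variation metric, since $\norm{\mathbf x^k_{0,T}-\mathbf y^k_{0,T}}\le d_{p\text{-var}}(\mathbf x,\mathbf y)$ level by level. For $N>\lfloor p\rfloor$ we proceed by induction on the level $n=\lfloor p\rfloor+1,\ldots,N$. At each step, the level-$n$ component is built as a limit of Riemann-type sums over partitions $D$ of $[s,t]$:
\begin{equation}
\mathbf x^{n}_{s,t}=\lim_{|D|\to0}\Big(\bigotimes_{[u,v]\in D}(1+\mathbf x^1_{u,v}+\cdots+\mathbf x^{n-1}_{u,v})\Big)^{n}.
\end{equation}
Lyons' sewing lemma, which removes partition points one at a time using a control $\omega$, gives two bounds. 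The first is $\norm{\mathbf x^n_{s,t}}\le C_n\,\omega(s,t)^{n/p}$, where $C_n$ depends only on $n$, $p$ and the bound on the bounded set. The second is a difference bound: if $\norm{\mathbf x^k_{s,t}-\mathbf y^k_{s,t}}\le\varepsilon\,\omega(s,t)^{k/p}$ for all $k<n$, then the same inequality holds at level $n$ with $\varepsilon$ replaced by $C'_n\varepsilon$.

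Iterating the difference bound up to level $N$ yields the local Lipschitz estimate
\begin{equation}
d_{p\text{-var}}\big(\Sig^{(N)}(\mathbf x),\Sig^{(N)}(\mathbf y)\big)\le C_N\,d_{p\text{-var}}(\mathbf x,\mathbf y)
\end{equation}
on bounded sets. Evaluating at $(0,T)$ then gives continuity into $T^{(N)}(\R^d)$.

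The main obstacle is the difference estimate in the sewing step. When a partition point is removed, the increment of the difference involves cross terms $\mathbf x^{k}\otimes\mathbf x^{n-k}-\mathbf y^{k}\otimes\mathbf y^{n-k}$. These have to be split telescopically as $(\mathbf x^k-\mathbf y^k)\otimes\mathbf x^{n-k}+\mathbf y^k\otimes(\mathbf x^{n-k}-\mathbf y^{n-k})$, and each piece must be bounded by $\varepsilon\,\omega^{n/p}$; this uses the uniform bound on the bounded set and the fact that $n/p>1$. The step is delicate, so rather than reproduce it in full I would cite \citet{lyons1998,lcl2007,friz2010multidimensional} for this estimate.
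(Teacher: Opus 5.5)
Your proposal is correct and follows the same route as the paper. Levels up to $\lfloor p\rfloor$ are continuous by the definition of the $p$-variation topology, and levels $\lfloor p\rfloor<n\le N$ follow from the continuity of Lyons' extension on bounded sets, with the sewing-type difference estimate cited rather than reproduced; your sketch just spells out what the paper leaves to the citation, and the only tacit step, choosing one control $\omega$ that dominates $\mathbf x$, $\mathbf y$ and the levelwise differences $\mathbf x^k-\mathbf y^k$ rescaled by $d_{p\text{-var}}(\mathbf x,\mathbf y)$, with $\omega(0,T)$ uniformly bounded on the bounded set, is standard.
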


\begin{proof}
For levels at most $\lfloor p\rfloor$ this is part of the rough-path topology. Higher levels up to $N$ are obtained by Lyons' extension theorem, and the extension map is continuous in $p$-variation on bounded sets \citep{lyons1998,lcl2007,frizhairer2020}.
\end{proof}

\begin{corollary}[signature large deviations]
\label{cor:ldp}
If a family of driving rough paths satisfies a large-deviation principle in $p$-variation with good rate function $I$, then their level-$N$ signatures satisfy a large-deviation principle with good rate function
\begin{equation}
I_{\Sig}(g)=\inf\{I(\mathbf x):\Sig^{(N)}(\mathbf x)=g\}.
\end{equation}
\end{corollary}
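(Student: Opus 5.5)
This is the contraction principle applied to the map $\Phi=\Sig^{(N)}$, using \cref{thm:itolyons} to supply the needed continuity. Write $\mu_\varepsilon$ for the laws of the driving rough paths $\mathbf X^\varepsilon$ on the $p$-variation rough-path space $\mathcal R$. These satisfy the LDP with good rate function $I$. The signature laws are then the image measures $\mu_\varepsilon\circ\Phi^{-1}$ on $T^{(N)}(\R^d)$, and the candidate rate function is $I_{\Sig}(g)=\inf\{I(\mathbf x):\Phi(\mathbf x)=g\}$, with $\inf\emptyset=+\infty$. The classical contraction principle asks for a map that is continuous on all of $\mathcal R$. \Cref{thm:itolyons} only gives continuity on bounded sets, but continuity is a local property. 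Since every point of $\mathcal R$ has a bounded neighbourhood, continuity on every bounded set already implies continuity on the whole space. So the first step is to record that $\Phi$ is continuous on $\mathcal R$.

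Next I will verify the two LDP bounds directly, which also makes the argument self-contained. For a closed set $F\subset T^{(N)}$, the preimage $\Phi^{-1}(F)$ is closed, so the upper bound for $\mu_\varepsilon$ gives $\limsup\varepsilon\log\mu_\varepsilon(\Phi^{-1}F)\le-\inf_{\Phi^{-1}F}I=-\inf_F I_{\Sig}$. The last equality is just the definition of $I_{\Sig}$, obtained by regrouping the infimum over fibres. The lower bound for open sets $G$ follows in the same way, because $\Phi^{-1}(G)$ is open.

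The remaining point, and the only one needing care, is that $I_{\Sig}$ is a \emph{good} rate function. Its sublevel set is $\{I_{\Sig}\le a\}=\Phi(\{I\le a\})$, and this identity requires the infimum in $I_{\Sig}$ to be attained. Attainment holds because $I$ is good. If $I_{\Sig}(g)\le a$, take a minimizing sequence $\mathbf x_n$ in the fibre over $g$. It eventually lies in the compact set $\{I\le a+1\}$, so it has a convergent subsequence. The fibre $\Phi^{-1}(g)$ is closed and $I$ is lower semicontinuous, so the limit lies in the fibre and attains the infimum. Once the identity holds, $\{I_{\Sig}\le a\}$ is the continuous image of a compact set and is therefore compact; being compact in the Hausdorff space $T^{(N)}$, it is also closed. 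This closedness of all sublevel sets is exactly lower semicontinuity of $I_{\Sig}$, so $I_{\Sig}$ is a good rate function and the corollary follows.

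I expect the main subtlety to be the local-to-global continuity step. It depends on the hypothesis of \cref{thm:itolyons} covering arbitrary bounded sets in the inhomogeneous $p$-variation metric, which is how I read it. If only the homogeneous metric were available, I would instead observe that the two metrics induce the same topology on weakly geometric rough paths.
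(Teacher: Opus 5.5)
Your proposal is correct and takes the same approach as the paper, whose entire proof is the one-line application of the contraction principle to the map $\Sig^{(N)}$, continuous by \cref{thm:itolyons}. Your additional steps are the standard proof of the contraction principle written out, and each is sound: upgrading continuity on bounded sets to global continuity via bounded neighbourhoods, checking the two bounds on preimages, and proving goodness of $I_{\Sig}$ through attainment of the fibre infimum.
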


\begin{proof}
Apply the contraction principle to the continuous map in \cref{thm:itolyons}.
\end{proof}

\section{Numerical validation}
\label{sec:numerics}

The reproducibility script included with the paper verifies five groups of identities.

\begin{enumerate}[label=(\roman*),leftmargin=2.25em]
\item Chen multiplicativity, the shuffle identity, tree-like cancellation, and Witt dimensions for signatures of piecewise-linear paths in $\R^2$.
\item The pure-jump Hopf square: forward \Ito{} equals iterated sums, and Marcus equals the Hoffman/geometric exponential.
\item The Marcus--\Ito{} jump correction and the \Ito{} shuffle-defect formula for simulated jump paths.
\item The Hawkes formulas \eqref{eq:EN}, \eqref{eq:EtN}, the explicit level-two matrix \eqref{eq:A2}, and the parameter reconstruction \eqref{eq:ident-reconstruct} using exact Ogata thinning.
\item The cross-area sign law for a two-channel Hawkes process and its reversal.
\end{enumerate}

For the Hawkes experiment, the parameters are
\begin{equation}
\mu=0.5,
\qquad
\alpha=0.8,
\qquad
\beta=1.0,
\qquad
T=10,
\qquad
\lambda_0=\mu.
\end{equation}

\begin{figure}[t]
\centering
\includegraphics[width=0.88\linewidth]{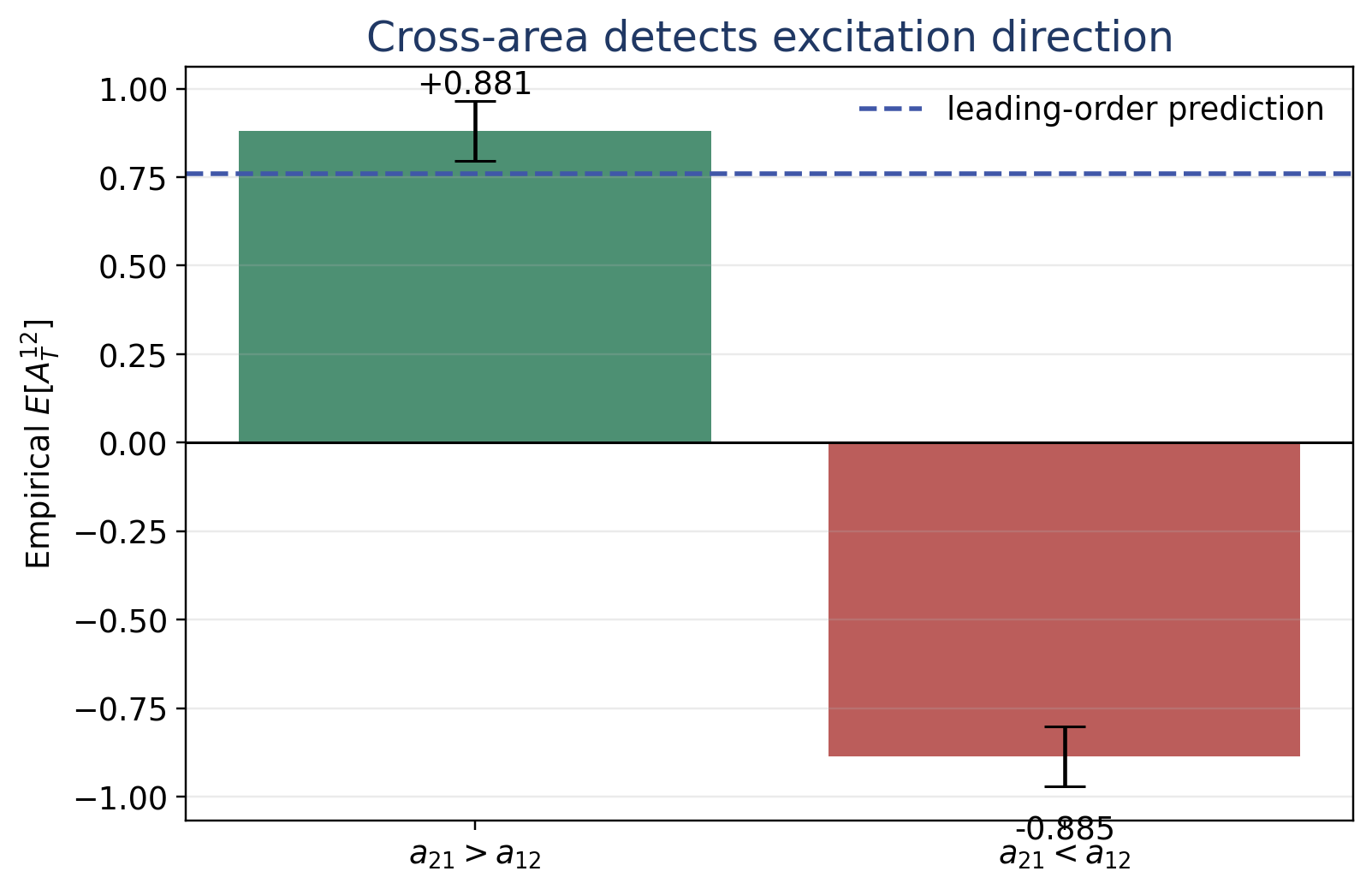}
\caption{Directional cross-area validation. Reversing the excitation asymmetry reverses the sign of the antisymmetric level-two coordinate, matching the leading-order directionality theorem.}
\label{fig:cross-area-numerics}
\end{figure}

The branching ratio is $\alpha/\beta=0.8$.

\begin{table}[t]
\centering
\renewcommand{\arraystretch}{1.18}
\begin{tabular}{@{}lrrr@{}}
\toprule
Quantity & Monte Carlo & Closed form & Error / SE \\
\midrule
$\E[N_T]$ & 16.3232 & 16.3534 & 0.75 \\
$\E[\int_0^T t\dd N_t]$ & 95.0867 & 95.3003 & 0.85 \\
Marcus--\Ito{} gap max error & \multicolumn{3}{r}{0.000} \\
Recovered $\mu$ & 0.5000 & 0.5000 & -- \\
Recovered $\alpha$ & 0.8000 & 0.8000 & -- \\
Recovered $\beta$ & 1.0000 & 1.0000 & -- \\
\bottomrule
\end{tabular}
\caption{Numerical validation of the Hawkes expected-signature identities and local identifiability formulas.}
\label{tab:hawkes-num}

\end{table}

\begin{table}[t]
\centering
\renewcommand{\arraystretch}{1.18}
\begin{tabular}{@{}lrr@{}}
\toprule
Cross-area experiment & Monte Carlo mean & Standard error \\
\midrule
$a_{21}>a_{12}$ & 0.8807 & 0.0850 \\
$a_{21}<a_{12}$ & -0.8853 & 0.0844 \\
Leading-order prediction for first row & 0.7590 & -- \\
\bottomrule
\end{tabular}
\caption{Directional cross-area validation using 50,000 exact bivariate Hawkes simulations per row. The sign reverses when the excitation direction is reversed.}
\label{tab:cross-area-num}
\end{table}

\FloatBarrier
\section{Master theorem}
\label{sec:master}

\begin{theorem}[master representation theorem for paths]
\label{thm:master}
The constructions in the paper assemble into the following six-layer representation theorem, each layer being valid under the hypotheses stated in the cited results.
\begin{enumerate}[label=(\roman*),leftmargin=2.25em]
\item \textbf{Algebraic path layer.} Bounded-variation signatures satisfy Chen multiplicativity and the shuffle identity; group-like signatures have Lie logarithms, and reduced bounded-variation paths are characterized by their signatures modulo tree-like equivalence.
\item \textbf{Jump and stream layer.} Finite-variation \cadlag{} paths admit forward \Ito{} and Marcus lifts. The forward lift fails shuffle by the jump bracket, while the Marcus correction restores geometricity. For pure-jump paths, the forward lift is the iterated-sums signature and the Marcus lift is Hoffman's exponential image.
\item \textbf{Geometricity-defect layer.} The two canonical failures of shuffle multiplicativity are second order: quadratic covariation for non-geometric conventions and coordinate covariance for expected signatures. Consequently, an expected signature is group-like only in the deterministic reduced-path case.
\item \textbf{Probabilistic and geometric layer.} Truncated signature kernels yield MMDs equal to Euclidean distances between truncated expected signatures, while the truncations of geometric signatures form a central tower of free nilpotent groups whose forgotten layers have Witt dimensions.
\item \textbf{Self-exciting layer.} Affine processes have finite linear closures for truncated expected signatures after state-weight augmentation. Scalar exponential Hawkes clocks admit the explicit systems \eqref{eq:hawkes-matrix} and \eqref{eq:A2}, local parameter recovery through \eqref{eq:ident-reconstruct}, and a directional cross-area detecting two-channel excitation asymmetry to first order.
\item \textbf{Boundary and continuation layer.} Raw expected signatures have intrinsic heavy-tail moment thresholds, normalized expected signatures provide bounded characteristic replacements under the hypotheses of \citet{chevyrev2022}, reversal acts by the tensor antipode, and signature large deviations follow by contraction from rough-path large deviations.
\end{enumerate}
\end{theorem}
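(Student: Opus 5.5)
The master theorem is an assembly statement, so I will prove it layer by layer: each item is reduced to results already proved or cited in the paper, and I will check that their hypotheses match the ones stated in the item. No new estimate is needed. The work is bookkeeping: the combination of hypotheses implicit in each item must be covered by the cited result, and every qualitative phrase in the item must correspond to an exact statement.

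\emph{Items (i) and (ii).} For (i), I invoke \cref{thm:chen} for multiplicativity and \cref{thm:shuffle} for the shuffle identity. The Lie-logarithm statement is \cref{thm:logsig}, and the characterization modulo tree-like equivalence is \cref{thm:hambly}. For (ii), the two lifts are given by the definition preceding \cref{prop:jump-correction}. The shuffle failure by the jump bracket is \cref{prop:ito-defect}, and restoration by the Marcus correction is \cref{cor:marcus-restores}. The pure-jump identification is \cref{prop:ito-sums}, combined with \cref{thm:hopf-square}, which gives $\Sig^M=\Phi(\Sig^I)$. I will also note that \cref{prop:hoffman-exact} ensures the Hoffman step introduces no approximation.

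\emph{Items (iii) and (iv).} Item (iii) is \cref{thm:geo-defect}: part (i) gives the convention defect and part (ii) the averaging defect. The consequence that group-likeness forces a deterministic reduced path is \cref{cor:grouplike-det}. For (iv), I cite \cref{prop:kernel-mmd}(ii) for the MMD identity and \cref{prop:central-tower} for the central tower, whose kernel dimensions are the Witt numbers of \cref{cor:witt}.

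\emph{Items (v) and (vi).} Item (v) follows from four results. The affine closure is \cref{prop:affine-sig}. The explicit Hawkes systems are \cref{thm:hawkes-closure} and \cref{thm:hawkes-A2}. Local recovery is \cref{thm:hawkes-identifiability}, under its hypotheses $\ell=\mu$ and $\alpha>0$. First-order directionality is \cref{thm:cross-area-leading}, with $c_T>0$. Item (vi) likewise follows from four results: the heavy-tail threshold is \cref{thm:stable-threshold}, normalized characteristic features come from \cref{prop:normalized}, the antipode action is \cref{thm:antipode} (with \cref{cor:area-odd}), and large deviations follow from \cref{cor:ldp} via the continuity in \cref{thm:itolyons}.

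The main obstacle is making sure each informal phrase is literally backed by a precise result. One point needs care in (iii): ``group-like only in the deterministic reduced-path case'' relies on \cref{cor:grouplike-det}, which in turn needs square-integrable coordinates so that the variance argument with $u=v=w$ applies. I will state this integrability as one of the ``hypotheses stated in the cited results''. A second point concerns (v): the cross-area statement is for the two-channel model with common decay and zero self-excitation, and I will state that restriction explicitly instead of claiming it for general Hawkes processes. With these qualifications recorded, the theorem follows by collecting the cited results.
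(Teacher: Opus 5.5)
Your proposal is correct and matches the paper's proof: the master theorem is proved there by the same layer-by-layer citation, namely \cref{thm:chen,thm:shuffle,thm:logsig,thm:hambly} for (i), the jump-lift and Hopf-square results for (ii), \cref{thm:geo-defect,cor:grouplike-det} for (iii), \cref{prop:kernel-mmd,prop:central-tower} for (iv), the affine and Hawkes results for (v), and \cref{thm:stable-threshold,prop:normalized,thm:antipode,cor:area-odd,thm:itolyons,cor:ldp} for (vi). Your extra caveats (square-integrability for the group-likeness corollary, and the restricted two-channel, common-decay, zero-self-excitation setting for the cross-area law) make the ``hypotheses stated in the cited results'' clause explicit; the paper's only addition is a remark that the expected-signature characterization is the standard one under integrability and determinacy assumptions.
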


\begin{proof}
Item (i) is \cref{thm:chen,thm:shuffle,thm:logsig,thm:hambly}. Item (ii) is \cref{prop:jump-correction,prop:ito-defect,prop:ito-sums,prop:hoffman-exact,thm:hopf-square}. Item (iii) is \cref{thm:geo-defect,cor:grouplike-det}. Item (iv) is \cref{prop:kernel-mmd,prop:central-tower}. Item (v) is \cref{prop:affine-sig,thm:hawkes-closure,thm:hawkes-A2,thm:hawkes-identifiability,thm:cross-area-leading}. Item (vi) is \cref{thm:stable-threshold,prop:normalized,thm:antipode,cor:area-odd,thm:itolyons,cor:ldp}. The expected-signature characterization used in the probabilistic layer is the standard one under integrability and determinacy assumptions \citep{chevyrev2016,chevyrev2022}.
\end{proof}

\section{Open problems and conjectures}
\label{sec:open}

\begin{conjecture}[signature-native graph recovery]
\label{conj:dag}
For sparse multivariate Hawkes kernels, the sign pattern of the expected cross-areas $\{\E[A_T^{ij}]\}_{i<j}$ over one or several horizons recovers the dominant directed excitation graph after controlling for common baselines and self-excitation.
\end{conjecture}

\begin{enumerate}[label=\textup{(O\arabic*)},leftmargin=2.4em]
\item \textbf{Signature cumulants and Witt compression} \emph{(conditional).} The logarithm of the expected signature is generically not Lie by \cref{cor:grouplike-det}, but the signature cumulants of \citet{bonnier2020signature} satisfy a Magnus-type recursion indexed by a Lyndon basis. Under the affine hypotheses of \cref{prop:affine-sig} this system is finite at each level. Whether it can be solved in $O(\sum_{k\le N}\ell_d(k))$ rather than $O(d^N)$ operations is open.
\item \textbf{Determinism modulus} \emph{(conjectural).} The Carnot distance from $\E\Sig^{(N)}$ to $G^{(N)}$ is conjectured comparable to the total coordinate standard deviation, giving a group-geometric modulus of non-determinism that refines \cref{cor:grouplike-det}.
\item \textbf{Finite-level Hawkes identifiability} \emph{(conjectural).} Level-two expected signatures may identify multivariate exponential Hawkes parameters beyond the local scalar identification in \cref{thm:hawkes-identifiability}. This is a finite-dimensional Jacobian question once the level-two moment system is written explicitly.
\end{enumerate}

\section{Conclusion}
\label{sec:conclusion}

The path is the primary object. The signature represents its chronological algebra; the log-signature removes tensor redundancy; the reduced path group controls parametrization and cancellation; jump lifts specify how discontinuities are interpreted; and expected signatures turn path geometry into law-level coordinates. The Hopf square shows that pure-jump \Ito{} signatures, iterated sums, Marcus chord fills, and Hoffman's exponential are one algebraic mechanism. The Hawkes closure theorem shows that self-exciting jump processes fit this path-first framework without pretending to be Levy processes: their expected signatures are governed by finite triangular ODEs and explicit matrix exponentials at fixed truncation level. The cross-area theorem identifies a genuinely directional level-two coordinate, and the antipode theorem explains its reversal sign. The stable-threshold theorem marks the boundary where raw expected signatures cease to be finite, while normalized signatures and contraction-based large deviations show how the framework continues beyond raw moments. These results give a standalone path theory with original, checkable consequences for event-driven stochastic modeling.

\appendix
\section{Ogata thinning used in the numerical experiment}
\label{app:ogata}

For the exponential Hawkes process, the intensity decreases deterministically between accepted events. Ogata thinning uses the current intensity as a valid upper bound until the next candidate time.

\begin{enumerate}[leftmargin=2.25em]
\item Initialize $t=0$ and $\lambda=\lambda_0$.
\item Draw $E\sim\mathrm{Exp}(\lambda)$ and set $t'=t+E$.
\item Decay the intensity to $\lambda^- = \mu+(\lambda-\mu)e^{-\beta(t'-t)}$.
\item Accept the candidate with probability $\lambda^-/\lambda$.
\item If accepted, record $t'$ and set $\lambda=\lambda^-+\alpha$; otherwise set $\lambda=\lambda^-$.
\item Continue until the candidate exceeds $T$.
\end{enumerate}

Correctness follows from thinning for point processes with predictable intensities \citep{ogata1981,lewis1979,gillespie1976}. For exponential kernels the monotonicity between jumps makes the current intensity a valid dominating rate, and the exact exponential-Hawkes simulation of \citet{dassios2013} provides an independent cross-check.

\section{Claim-tier summary}
\label{app:claims}

The table separates original contributions from classical inputs and cited conditional tools. The word ``proved'' in the paper means either proved directly in the text or explicitly reduced to the cited theorem listed here; the ``Role'' column records which is which.

{\footnotesize
\begin{longtable}{@{}>{\raggedright\arraybackslash}p{0.24\linewidth}>{\raggedright\arraybackslash}p{0.18\linewidth}>{\raggedright\arraybackslash}p{0.14\linewidth}>{\raggedright\arraybackslash}p{0.36\linewidth}@{}}
\toprule
Result & Role & Tier & Basis \\
\midrule
\endfirsthead
\toprule
Result & Role & Tier & Basis \\
\midrule
\endhead
Chen identity and shuffle & Classical input & \Proved & Direct proof and Chen theory; \cref{thm:chen,thm:shuffle} \\
Log-signature Lie property & Classical input & \Proved & Chen--Ree theorem; \cref{thm:logsig} \\
Reduced path uniqueness & Classical input & \Proved & Hambly--Lyons theorem; \cref{thm:hambly} \\
Marcus--\Ito{} jump correction & Classical input / local proof & \Proved & Direct level-two calculation; \cref{prop:jump-correction} \\
\Ito{} shuffle defect & New synthesis, direct proof & \Proved & Finite-variation product rule; \cref{prop:ito-defect} \\
Geometricity defect theorem & New synthesis, direct proof & \Proved & Bracket/covariance identities; \cref{thm:geo-defect} \\
Expected signature leaves the group & New synthesis & \Proved & Covariance defect plus signature uniqueness; \cref{cor:grouplike-det} \\
Kernel MMD decomposition & Supporting consequence & \Proved & Bilinear expansion; \cref{prop:kernel-mmd} \\
Central tower of truncations & Geometric interpretation & \Proved & Nilpotent Lie grading; \cref{prop:central-tower} \\
Marcus uniqueness for monotone completed graphs & Supporting uniqueness statement & \Proved & Hambly--Lyons plus time-augmented monotone graph; \cref{thm:marcus-unique} \\
\Ito{} equals sums; Marcus equals Hoffman(\Ito{}) & New structural theorem & \Proved & Pure-jump recursion and Hoffman exponential; \cref{prop:ito-sums,prop:hoffman-exact,thm:hopf-square} \\
Discrete-to-continuous bound without Hoffman & Supporting estimate & \Proved & Telescoping product estimate; \cref{thm:discrete-bound} \\
Affine expected-signature closure & Conditional framework & \Conditional & Requires non-explosion, finite moments, polynomial-domain closure; \cref{prop:affine-sig} \\
Hawkes expected-signature closure and $A_2$ matrix & New calculation & \Proved & Direct generator calculation; \cref{thm:hawkes-closure,thm:hawkes-A2} \\
Hawkes local identifiability & New calculation & \Proved & Derivative reconstruction; \cref{thm:hawkes-identifiability} \\
Cross-area leading-order law & New calculation & \Proved & First-order expansion around independent Poisson base; \cref{thm:cross-area-leading} \\
Reversal antipode and cross-area oddness & Classical Hopf fact used structurally & \Proved & Tensor antipode; \cref{thm:antipode,cor:area-odd} \\
Cross-area graph recovery & Open direction & \Conjectural & \cref{conj:dag} \\
Stable moment threshold & Limitation theorem & \Proved & Stable moment criterion applied to diagonal signature coordinate; \cref{thm:stable-threshold} \\
Normalized expected signature & Cited continuation & \Conditional & Characteristic normalized kernel hypotheses; \cref{prop:normalized} \\
Signature LDP & Standard consequence & \Proved & Contraction principle and signature continuity; \cref{thm:itolyons,cor:ldp} \\
Expected signature determines law & Cited conditional theorem & \Conditional & Requires integrability and moment determinacy \citep{chevyrev2016,chevyrev2022} \\
\bottomrule
\end{longtable}
}

\bibliographystyle{plainnat}
\bibliography{references}

\end{document}